\definecolor{cite}{rgb}{0.50,0.00,1.00}
\definecolor{url}{rgb}{0.00,0.50,0.75}
\definecolor{link}{rgb}{0.00,0.00,0.50}
\begin{document}
\title[$E_1$-degeneration]{On $E_1$-degeneration for the special fiber of a semistable family}
\author[Mao Sheng]{Mao Sheng}
\email{msheng@ustc.edu.cn}
\author[Junchao Shentu]{Junchao Shentu}
\email{stjc@amss.ac.cn}
\address{School of Mathematical Sciences,
	University of Science and Technology of China, Hefei, 230026, China}


\theoremstyle{definition}
\newtheorem{Unity}{Unity}[section]
\newtheorem*{defn*}{Definition}
\newtheorem{defn}[Unity]{Definition}
\newtheorem{exmp}[Unity]{Example}
\newtheorem{Notation}[Unity]{Notation}
\newtheorem{Claim}[Unity]{Claim}
\newtheorem*{Setting}{Setting}
\newtheorem{rmk}[Unity]{Remark}
\theoremstyle{plain}
\newtheorem*{thm*}{Theorem}
\newtheorem{thm}[Unity]{Theorem}
\newtheorem{prop}[Unity]{Proposition}
\newtheorem*{prop*}{Proposition}
\newtheorem*{cor*}{Corollary}
\newtheorem{cor}[Unity]{Corollary}
\newtheorem{lem}[Unity]{Lemma}
\newtheorem{conj}[Unity]{Conjecture}
\newtheorem{prob}[Unity]{Problem}
\newtheorem{question}[Unity]{Question}
\theoremstyle{remark}

\numberwithin{Unity}{section}

\newcommand{\sA}{\mathscr{A}}
\newcommand{\sB}{\mathscr{B}}
\newcommand{\sC}{\mathscr{C}}
\newcommand{\sD}{\mathscr{D}}
\newcommand{\sE}{\mathscr{E}}
\newcommand{\sF}{\mathscr{F}}
\newcommand{\sG}{\mathscr{G}}
\newcommand{\sH}{\mathscr{H}}
\newcommand{\sI}{\mathscr{I}}
\newcommand{\sJ}{\mathscr{J}}
\newcommand{\sK}{\mathscr{K}}
\newcommand{\sL}{\mathscr{L}}
\newcommand{\sM}{\mathscr{M}}
\newcommand{\sN}{\mathscr{N}}
\newcommand{\sO}{\mathscr{O}}
\newcommand{\sP}{\mathscr{P}}
\newcommand{\sQ}{\mathscr{Q}}
\newcommand{\sR}{\mathscr{R}}
\newcommand{\sS}{\mathscr{S}}
\newcommand{\sT}{\mathscr{T}}
\newcommand{\sU}{\mathscr{U}}
\newcommand{\sV}{\mathscr{V}}
\newcommand{\sW}{\mathscr{W}}
\newcommand{\sX}{\mathscr{X}}
\newcommand{\sY}{\mathscr{Y}}
\newcommand{\sZ}{\mathscr{Z}}
\newcommand{\spec}{\textrm{Spec}}
\newcommand{\dlog}{\textrm{dlog}}

\newcommand{\A}{{\mathbb A}}
\newcommand{\B}{{\mathbb B}}
\newcommand{\C}{{\mathbb C}}
\newcommand{\D}{{\mathbb D}}
\newcommand{\E}{{\mathbb E}}
\newcommand{\F}{{\mathbb F}}
\newcommand{\G}{{\mathbb G}}
\renewcommand{\H}{{\mathbb H}}
\newcommand{\I}{{\mathbb I}}\

\newcommand{\J}{{\mathbb J}}
\renewcommand{\L}{{\mathbb L}}
\newcommand{\M}{{\mathbb M}}
\newcommand{\N}{{\mathbb N}}
\renewcommand{\P}{{\mathbb P}}
\newcommand{\Q}{{\mathbb Q}}
\newcommand{\Qbar}{\overline{\Q}}
\newcommand{\R}{{\mathbb R}}
\newcommand{\SSS}{{\mathbb S}}
\newcommand{\T}{{\mathbb T}}
\newcommand{\U}{{\mathbb U}}
\newcommand{\V}{{\mathbb V}}
\newcommand{\W}{{\mathbb W}}
\newcommand{\Z}{{\mathbb Z}}
\newcommand{\Spec}{{\rm Spec}}

\thanks{The first named author is supported by National Natural Science Foundation of China (Grant No. 11622109, No. 11721101)}
\maketitle
\begin{abstract}
We study the $E_1$-degeneration of the logarithmic Hodge to de Rham spectral sequence of the special fiber of a semistable family over a discrete valuation ring. On the one hand, we prove that the $E_1$-degeneration property is invariant under admissible blow-ups. Assuming functorial resolution of singularities over $\mathbb{Z}$, this implies that the $E_1$-degeneration property depends only on the generic fiber. On the other hand, we show by explicit examples that the decomposability of the logarithmic de Rham complex is not invariant under admissible blow-ups, which answer negatively an open problem of L. Illusie (Problem 7.14 \cite{Illusie2002}). We also give an algebraic proof of an $E_1$-degeneration result in characteristic zero due to Steenbrink and Kawamata-Namikawa.
\end{abstract}

\section{Introduction}
Let $R$ be a discrete valuation ring with perfect residue field $k$, set $S=\Spec(R)$, and let $X\to S$ be a proper flat scheme over $R$ which is a semistable family (Definition \ref{defn_ssreduction}).  Logarithmic geometry in the sense of Fontaine-Illusie-Kato upgrades the structural morphism $X\to S$ naturally to a \emph{log smooth} morphism between log schemes \cite[Example 3.7 (2)]{KKato1988}. Restricting to the closed point of $S$, one obtains a smooth morphism ${\bf X_0}\to {\bf k}$ of log schemes. The following is a basic problem in the Hodge theory of semistable families:
\begin{prob}\label{$E_1$-degeneration problem}
When does the logarithmic Hodge to de Rham spectral sequence 
$$
E_1^{i,j}=H^j(X_0, \Omega^i_{{\bf X_0}/{\bf k}})\Longrightarrow H^{i+j}_{dR}({\bf X_0}/{\bf k})
$$
degenerate at $E_1$, where $H^{i+j}_{dR}({\bf X_0}/{\bf k})$ is the hypercohomology of the relative logarithmic de Rham complex $\Omega^{\bullet}_{{\bf X_0}/{\bf k}}$ associated to the morphism ${\bf X_0}\to {\bf k}$ of log schemes?
\end{prob}  
Classical Hodge theory gives the affirmative answer to the simplest case of the above problem, namely the case where $\mathrm{char}\ k=0$ and $X_0$ is smooth over $k$.  Keeping the assumption $\mathrm{char}\ k=0$, work of Steenbrink  \cite[Corollary 4.20 (ii)]{Steenbrink1976} implies
\begin{thm}[Steenbrink, $\mathrm{char}\ k=0$]\label{Steenbrink's thm}
Notation as above.  Then the above spectral sequence degenerates at $E_1$ when $X$ is a strictly semistable family over $S$.
\end{thm}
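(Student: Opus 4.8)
The plan is to exhibit $(\Omega^\bullet_{{\bf X_0}/{\bf k}},F)$, where $F$ is the naive (b\^ete) filtration, as the de Rham component of a cohomological mixed Hodge complex supported on $X_0$; once this is in place, the $E_1$-degeneration is a formal consequence of Deligne's theory of mixed Hodge structures. The first step is a reduction to $k=\C$. Since $X_0$ is proper over $k$, the terms $H^j(X_0,\Omega^i_{{\bf X_0}/{\bf k}})$ and $H^{i+j}_{dR}({\bf X_0}/{\bf k})$ are finite-dimensional $k$-vector spaces, the $E_1$-degeneration is equivalent to the identity $\sum_{i+j=n}\dim_k H^j(X_0,\Omega^i_{{\bf X_0}/{\bf k}})=\dim_k H^n_{dR}({\bf X_0}/{\bf k})$ for every $n$, and these dimensions do not change under flat extension of $k$, because the formation of $\Omega^\bullet_{{\bf X_0}/{\bf k}}$ and of its hypercohomology commutes with such base change. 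A standard spreading-out argument then allows me to assume that the residue field is finitely generated over $\Q$, hence embeds into $\C$; base-changing, I may assume $k=\C$, and, since the original semistable family $X\to S$ supplies a strictly (in particular $d$-)semistable model, that $X_0=\bigcup_i D_i$ is a reduced normal crossings variety realized as the special fiber of a strictly semistable degeneration, with $\Omega^\bullet_{{\bf X_0}/{\bf k}}\cong\bigl(\Omega^\bullet_X(\log X_0)/\dlog(t)\wedge\Omega^{\bullet-1}_X(\log X_0)\bigr)\otimes_{\mathcal{O}_X}\mathcal{O}_{X_0}$, where $t$ is a local equation of $X_0$ in $X$.

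The main step, following Steenbrink, is to bring in the weight complex $A^\bullet$ on $X_0$ — a complex of sheaves, obtained as the total complex of a double complex built out of $\Omega^\bullet_X(\log X_0)$ and $\dlog(t)$ — which is quasi-isomorphic to the complex of nearby cycles of the degeneration and carries a weight filtration $W$ and a Hodge filtration $F$. Two things then must be checked. First, there is a filtered quasi-isomorphism $(\Omega^\bullet_{{\bf X_0}/{\bf k}},F)\xrightarrow{\sim}(A^\bullet,F)$, which is a local computation on $X_0$ depending only on the combinatorics of the normal crossings. Second, writing $X_0^{(m)}=\bigsqcup_{|I|=m}\bigcap_{i\in I}D_i$ for the disjoint union of the $m$-fold intersections of the irreducible components of $X_0$ — each of these being smooth and proper over $\C$ — every graded piece $\mathrm{gr}^W_\ell A^\bullet$ is identified with a finite direct sum of Tate twists of constant sheaves on the $X_0^{(m)}$, placed in suitable cohomological degrees. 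Granting these, the data $(A^\bullet_{\Q},W;\,A^\bullet_{\C},W,F)$, with $A^\bullet_{\Q}$ the corresponding rational weight complex, form a cohomological mixed Hodge complex on $X_0$: its axioms reduce, via the computation of $\mathrm{gr}^W$, to the purity of the Hodge structures carried by the smooth proper varieties $X_0^{(m)}$, i.e. to the already-recalled classical case of a smooth proper variety.

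From here the conclusion is immediate. For the image of a cohomological mixed Hodge complex under $R\Gamma(X_0,-)$, the spectral sequence attached to the Hodge filtration degenerates at $E_1$ (Deligne, \emph{Th\'eorie de Hodge}~II, Scholie 8.1.9). Transporting this along the filtered quasi-isomorphism of the previous step identifies it with the spectral sequence of the filtered complex $\bigl(R\Gamma(X_0,\Omega^\bullet_{{\bf X_0}/{\bf k}}),F\bigr)$, which is precisely the logarithmic Hodge to de Rham spectral sequence $E_1^{i,j}=H^j(X_0,\Omega^i_{{\bf X_0}/{\bf k}})\Rightarrow H^{i+j}_{dR}({\bf X_0}/{\bf k})$. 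Hence this spectral sequence degenerates at $E_1$ over $\C$, and, by the first step, for the original $k$ as well.

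The hard part is clearly the second step — the filtered quasi-isomorphism $\Omega^\bullet_{{\bf X_0}/{\bf k}}\simeq A^\bullet$ together with the description of $\mathrm{gr}^W A^\bullet$ in terms of the strata $X_0^{(m)}$ — which is in essence Steenbrink's original construction of the limit mixed Hodge structure; I do not expect a genuine shortcut around it. A purely algebraic alternative, closer to the ``algebraic proof'' announced in the abstract, would instead proceed by spreading the log scheme ${\bf X_0}\to{\bf k}$ out over a finitely generated $\Z$-subalgebra $B$, shrinking $\Spec B$ so that the coherent $B$-modules $H^q(X_0,\Omega^p_{{\bf X_0}/{\bf B}})$ and $H^n_{dR}({\bf X_0}/{\bf B})$ are all free and their formation commutes with base change, passing to a closed point of residue characteristic $p>\dim X_0$, observing that the special log scheme of a strictly semistable family is \'etale-locally cut out by a single equation and therefore lifts, together with its log structure, to $W_2$ of the residue field, applying the logarithmic Deligne--Illusie decomposition theorem (Kato) to obtain $E_1$-degeneration over that point, and finally — all the Hodge and de Rham Betti numbers being now constant along the family — deducing $E_1$-degeneration at the generic point, hence in characteristic zero. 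The friction in that route is the $W_2$-liftability bookkeeping for the log structure, and the restriction $p>\dim X_0$, which is harmless since $p$ may be taken arbitrarily large.
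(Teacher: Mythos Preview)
Your proposal is correct. The paper does not supply its own proof of this theorem; it simply attributes the result to Steenbrink \cite[Corollary 4.20 (ii)]{Steenbrink1976}, and your main approach---reduction to $\C$, Steenbrink's weight complex $A^\bullet$ as a cohomological mixed Hodge complex, and Deligne's $E_1$-degeneration for the Hodge filtration---is precisely Steenbrink's original argument that the paper is citing.

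Your alternative algebraic route is, almost verbatim, the proof the paper gives for the more general Proposition~\ref{reproof on KN} (Kawamata--Namikawa): spread out the log morphism over a finite-type $\Z$-algebra $A$, shrink so that $A$ is smooth over $\Z$ and all relevant cohomology modules are free and commute with base change, specialize to a closed point of large residue characteristic $p$, lift to ${\bf W_2}$, and apply Kato's decomposition (in the form of Theorem~\ref{thm_decom_lifting}). One small correction to your phrasing: the reason the mod-$p$ log scheme lifts to ${\bf W_2}$ is \emph{not} that it is ``\'etale-locally cut out by a single equation''---local liftability does not give global liftability, and indeed the whole point of \S4 is that such global $W_2$-lifts can fail. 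The correct mechanism, as in the paper's proof of Proposition~\ref{reproof on KN}, is that once $A$ is smooth over $\Z$ the closed immersion ${\bf k}\hookrightarrow {\bf S}$ extends to ${\bf W_2}\to{\bf S}$, and pulling back the spread-out family along this already furnishes the required ${\bf W_2}$-lifting.
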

 While the existence of a smooth proper generic fiber degenerating to $X_0$ is crucial to Steenbrink's argument, it is unnecessary for the truth of Problem \ref{$E_1$-degeneration problem} when $\mathrm{char}\ k=0$. Recall that a normal crossing variety $X$ over $k$ is \emph{$d$-semistable} if the infinitesimal normal bundle $\sT^1_X$ is isomorphic to $\sO_{X_{\rm sing}}$, where $X_{\rm sing}$ is the singular locus of $X$.  We shall give an algebraic proof of the following result, which is essentially due to Kawamata-Namikawa \cite[Lemma 4.1]{KN1994}.

\begin{prop*}[Kawamata-Namikawa, Proposition \ref{reproof on KN}]\label{improvement of Steenbrink}
For a proper $d$-semistable normal crossing variety over a field $k$ of characteristic 0, the Hodge to de Rham spectral sequence in Problem \ref{$E_1$-degeneration problem} degenerates at $E_1$. 
\end{prop*}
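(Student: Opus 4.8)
The plan is to reduce the $E_1$-degeneration for a $d$-semistable normal crossing variety $X_0$ over $k$, $\mathrm{char}\ k = 0$, to Theorem~\ref{Steenbrink's thm} by constructing a suitable smoothing. By Spec–Lefschetz-type arguments it suffices to treat $k$ a field finitely generated over $\mathbb{Q}$, and then (by a further specialization/base-change argument, using that Hodge and de Rham cohomology commute with flat base change and that $E_1$-degeneration is detected by dimension counts) one may even assume $k = \mathbb{C}$. The key input is the classical result of Friedman and Kawamata–Namikawa: a proper $d$-semistable normal crossing variety over $\mathbb{C}$ (perhaps after checking that the obstruction in $H^2(X_0, \Theta_{X_0})$ or the relevant $T^2$ vanishes, which is automatic in the cases treated, or else that one only needs a formal/one-parameter smoothing) admits a semistable deformation over a disk, i.e.\ there is a strictly semistable family $\mathcal{X} \to \Delta$ with central fiber $X_0$. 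One then invokes Theorem~\ref{Steenbrink's thm} for this family.

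First I would set up the logarithmic structures carefully: the log structure on ${\bf X_0}$ coming from $X_0$ as the special fiber of $\mathcal{X} \to \Delta$ agrees with the canonical log structure attached to a $d$-semistable normal crossing variety (this is where $d$-semistability is exactly the condition making the abstract log structure exist and be the ``expected'' one). Hence $\Omega^\bullet_{{\bf X_0}/{\bf k}}$ and its Hodge cohomology are intrinsic to $X_0$ plus its $d$-semistable log structure, and do not see the chosen smoothing. Second, I would apply the smoothing theorem to produce $\mathcal{X} \to \Delta$ (or over a smooth curve over $k$), strictly semistable with special fiber $X_0$. Third, Theorem~\ref{Steenbrink's thm} gives $E_1$-degeneration of the logarithmic Hodge-to-de-Rham spectral sequence of the special fiber of this strictly semistable family, which by the first step is exactly the spectral sequence in Problem~\ref{$E_1$-degeneration problem} for $X_0$. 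That completes the proof once the reduction to $\mathbb{C}$ is in place.

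The main obstacle, and the part requiring genuine care, is the smoothing step and the passage to $\mathbb{C}$. The Kawamata–Namikawa smoothing result is stated for $\mathbb{C}$ and uses transcendental/analytic deformation theory; to get an ``algebraic proof'' one wants to avoid it, or at least to package it cleanly. The honest strategy is: (i) spread out $X_0$ over a finitely generated $\mathbb{Z}$-algebra and use a Lefschetz principle to reduce the $E_1$-degeneration statement — which is a purely numerical statement about ranks of the differentials $d_1, d_2, \dots$ — to the case $k = \mathbb{C}$; (ii) over $\mathbb{C}$, invoke Friedman's / Kawamata–Namikawa's existence of a semistable smoothing (citing \cite{KN1994}) and then Steenbrink's theorem. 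The delicate points are checking that $d$-semistability is preserved under the spreading-out/base-change, that the comparison of log structures holds integrally, and that no obstruction to one-parameter smoothing intervenes — but for a proper $d$-semistable normal crossing variety the relevant $T^2$-obstruction is handled exactly as in \cite[Lemma 4.1]{KN1994}, which is what we are reproving in algebraic language. An alternative, more self-contained route would bypass smoothing entirely: construct the degeneration abstractly via a mixed Hodge structure on $H^\ast_{dR}({\bf X_0}/{\bf k})$ built from the Steenbrink-type weight spectral sequence attached to the components of $X_0$ and their intersections, and show directly that the Hodge numbers add up; this is closer to what a fully algebraic proof should look like, and I would present the argument in that form if the log de Rham complex of a $d$-semistable variety can be resolved by a Mayer–Vietoris / cosimplicial complex of the strata, with the strata being smooth projective over $k$ so that classical Hodge theory applies termwise.
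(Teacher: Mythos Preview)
Your main line of argument has a genuine gap: a proper $d$-semistable normal crossing variety need \emph{not} admit a semistable smoothing. Friedman proved $d$-semistability is necessary for smoothing, and Kawamata--Namikawa proved sufficiency only under strong extra hypotheses (trivial dualizing sheaf, vanishing of $H^{n-1}(X,\sO_X)$, etc.); in general the obstruction in $T^2$ does not vanish. Indeed, the paper's remark immediately after the statement invokes Friedman and Persson--Pinkham precisely to say that Proposition~\ref{reproof on KN} is \emph{strictly} more general than Theorem~\ref{Steenbrink's thm}, i.e.\ there exist $d$-semistable normal crossing varieties not arising as special fibers of any strictly semistable family. So your reduction to Steenbrink's theorem via smoothing cannot work in the stated generality, and your parenthetical ``which is automatic in the cases treated'' is simply false. (Also, \cite[Lemma~4.1]{KN1994} is the $E_1$-degeneration statement itself, not a smoothing or $T^2$-vanishing result; you are conflating two different parts of that paper.)

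The paper's proof is entirely different and avoids smoothing altogether: it is a characteristic-$p$ argument in the style of Deligne--Illusie. One spreads out the log scheme $\mathbf{X}\to\mathbf{K}$ (with its log structure of semistable type) over a finite-type $\Z$-algebra $A$, shrinks $\spec(A)$ so that the relevant Hodge and de Rham cohomologies are free and commute with base change, and then specializes to a closed point of large residue characteristic $p$. Because the spreading-out is over the log base $(\spec(A),1\mapsto 0)$ and $(\spec(A),1\mapsto 0)$ is smooth over $(\spec(\Z),1\mapsto 0)$, the specialization automatically lifts to $\mathbf{W}_2=(\spec(W_2(k)),1\mapsto 0)$. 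Theorem~\ref{thm_decom_lifting} then gives DR-decomposability of the reduction, hence $E_1$-degeneration mod $p$, hence (by freeness and base change) $E_1$-degeneration over $K$. The point is that the lifting to $\mathbf{W}_2$ comes for free from the spreading-out, with no smoothing or $T^2$-analysis required; this is what makes the argument both algebraic and unconditional. Your alternative Mayer--Vietoris/mixed-Hodge sketch is closer to the original transcendental proof of Kawamata--Namikawa, but is neither the paper's method nor developed enough here to assess.
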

\begin{rmk}
Work of Friedman \cite{Friedman1983} and Persson-Pinkham \cite{PP1983} imply that Proposition \ref{reproof on KN} is strictly more general than Theorem \ref{Steenbrink's thm}.
\end{rmk}
Next, we consider the case where $R$ is the ring of integers of a complete valued field $K$ of mixed characteristic. The fundamental work of Deligne-Illusie \cite{Del_Ill1987} provides us the affirmative answer of Problem \ref{$E_1$-degeneration problem}  for the case that $R$ is the Witt ring $W(k)$ and $X$ is proper smooth over $R$ of relative dimension $\leq \mathrm{char}\ k=p$. Their key result is the de Rham decomposition theorem: for a \emph{smooth} variety $X_0$ over $k$, it is $W_2(k)$-liftable if and only if $\tau_{<p}F_{\ast}\Omega^{\bullet}_{X_0/k}$ is isomorphic to $\bigoplus_{i=0}^{p-1}\Omega^i_{X'_0/k}[-i]$ in the derived category $D(X_0')$, where $X'_0=X_0\times_{k,\sigma}X_0$ (with $\sigma$ the Frobenius automorphism of $k$) and $F: X_0\to X'_0$ is the relative Frobenius. Now for a semistable reduction $X$ over $R=W(k)$,  L. Illusie raised the following problem:
\begin{prob}[Illusie,  Problem 7.14 \cite{Illusie2002}]\label{prob_Illusie}
Is the truncated logarithmic de Rham complex $\tau_{<p}F_{\ast}\Omega^{\bullet}_{{\bf X_0/k}}$
decomposable in $D(X'_0)$?
\end{prob}
In below, the log scheme ${\bf X_0}$ is said to be \emph{de Rham-decomposable} (DR-decomposable) if the answer to Problem \ref{prob_Illusie} is affirmative for ${\bf X_0}$. Clearly, Problem \ref{$E_1$-degeneration problem} is affirmative for a semistable family over $W(k)$ of relative dimension $<p$ whose special fiber $\bf X_0$ (as log scheme over $\bf k$) is DR-decomposable. The Illusie's problem for the curve case is affirmative for cohomological reason (Remark \ref{rmk_vanish_ob} (1)). Beyond this case, our answer to it is surprisingly NO.  See \S4 for counterexamples of arbitrarily large dimension.

The construction of our examples belongs to one of the simplest kinds of operations in algebraic geometry: blow-ups.  In the setting of semistable families, we introduce the following 
\begin{defn}\label{defn_admissible_blow-up}
Let $Z$ a semistable family over $S$. A blow-up of $Z$ with the center $Y_0\subset Z$ is said to be \emph{admissible} if $Y_0$ is contained in $Z_0$, regular and has normal crossings with $Z_0$ (Definition \ref{defn_NC_subscheme}).
\end{defn}
It is clear that an admissible blow-up of a semistable family over $S$ is again a semistable family over $S$. We prove that the $E_1$-degeneration property is invariant under admissible blow-ups.

\begin{thm*}[Theorem \ref{thm_degeneration_E1}]\label{thm A}
Let $R$ be a discrete valuation ring and $Z$ a semistable family over $S$. Let $X$ be an admissible blow-up of $Z$. Then Problem \ref{$E_1$-degeneration problem} holds true for ${\bf Z_0}$ if and only if it holds true for ${\bf X_0}$. 
\end{thm*}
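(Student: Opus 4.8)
The plan is to compare the logarithmic de Rham complexes of $\mathbf X_0$ and $\mathbf Z_0$ through the blow-up morphism, to show that they differ only by a ``correction'' complex which is of strictly smaller dimension and again of the same type, and then to conclude by induction on dimension. Write $f\colon X\to Z$ for the admissible blow-up with center $Y_0$ and $E\subset X$ for the exceptional divisor; by functoriality of the canonical log structures this upgrades to a morphism $\mathbf X\to\mathbf Z$ restricting to $f_0\colon\mathbf X_0\to\mathbf Z_0$, an isomorphism over $Z_0\setminus Y_0$. Since $R\Gamma(\mathbf X_0,\Omega^\bullet_{\mathbf X_0/\mathbf k})\cong R\Gamma\bigl(\mathbf Z_0,Rf_{0\ast}\Omega^\bullet_{\mathbf X_0/\mathbf k}\bigr)$, with the Hodge filtration on the left induced by $Rf_{0\ast}$ of the stupid filtration, everything is governed by the filtered complex $Rf_{0\ast}\Omega^\bullet_{\mathbf X_0/\mathbf k}$, and the whole problem reduces to computing it.

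The central step is a logarithmic blow-up formula: in the filtered derived category of $Z_0$,
$$Rf_{0\ast}\Omega^\bullet_{\mathbf X_0/\mathbf k}\;\cong\;\Omega^\bullet_{\mathbf Z_0/\mathbf k}\;\oplus\;\mathcal C^\bullet,$$
where $\mathcal C^\bullet$ is a filtered complex supported on $Y_0$ and $R\Gamma(Y_0,\mathcal C^\bullet)$, with its filtration, is a shifted Tate-twisted sum of copies of the logarithmic de Rham complex of $\mathbf Y_0$ — the scheme $Y_0$ (smooth and proper over $k$, by admissibility) endowed with the normal-crossings log structure it inherits from $\mathbf Z_0$. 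One proves this étale-locally on a standard chart $Z=\Spec R[x_1,\dots,x_n]/(x_1\cdots x_r-\pi)$: the requirement that $X$ again be semistable forces the center, after relabeling, to be cut out by exactly one of the $x_1,\dots,x_r$ together with several coordinates transverse to $Z_0$, so that $E$ is a projective bundle over $Y_0$ carrying a relative hyperplane cut out by the strict transform of that one component of $Z_0$ through $Y_0$. One then analyzes log forms on $\mathbf X_0$ near $E$ via the relative logarithmic Euler sequence of this bundle, realizes the pullback $\Omega^\bullet_{\mathbf Z_0/\mathbf k}\to Rf_{0\ast}\Omega^\bullet_{\mathbf X_0/\mathbf k}$ as a split monomorphism with the stated cokernel — the splitting being given by the logarithmic Chern class of $\sO_X(E)$, hence canonical and compatible with filtrations — and checks that the local decompositions glue. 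It follows that the Hodge-to-de Rham spectral sequence of $\mathbf X_0$ is the direct sum of that of $\mathbf Z_0$ and of twisted shifted copies of that of $\mathbf Y_0$. In particular the spectral sequence of $\mathbf Z_0$ is a direct summand of that of $\mathbf X_0$, so the implication ``$E_1$-degeneration for $\mathbf X_0$ $\Rightarrow$ for $\mathbf Z_0$'' is immediate, and the converse reduces to the unconditional $E_1$-degeneration of the spectral sequence of $\mathbf Y_0$.

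For that last point, $\mathbf Y_0$ has strictly smaller dimension and is a lower-dimensional log scheme of the same semistable type, so its $E_1$-degeneration is available: in characteristic zero by the theory of mixed Hodge structures on open varieties (the same input as in the Kawamata--Namikawa proposition above), and in general by induction on dimension, the base case $\dim X\le1$ being the curve case, where degeneration holds for cohomological reasons. I expect the main obstacle to be the filtered blow-up formula of the second paragraph — not the abstract derived-category splitting, which is formal, but (i) pinning down the log structures inherited by $Y_0$ and by $E$ so that the correction term is genuinely the log de Rham complex of a lower-dimensional object of the same kind, and (ii) upgrading the splitting to one of \emph{filtered} complexes, so that the spectral sequences, and not merely their abutments, decompose. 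A secondary difficulty is arranging the induction so that the correction terms stay within the inductive framework at every stage.
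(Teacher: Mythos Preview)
Your blow-up formula is the wrong expectation: in this logarithmic setting the correction term $\mathcal C^\bullet$ is \emph{zero}. The paper's key proposition shows that $\pi^{\ast i}\colon\Omega^i_{\mathbf Z_0/\mathbf k}\to R\pi_\ast\Omega^i_{\mathbf X_0/\mathbf k}$ is already an isomorphism in $D(Z_0)$ for every $i$, so the two Hodge-to-de Rham spectral sequences are literally isomorphic and the equivalence follows at once, with no appeal to $\mathbf Y_0$ at all. The mechanism is that the exceptional divisor $E$ becomes a new component of $X_0$ and hence part of the log structure on $\mathbf X_0$; the extra $\dlog$ along $E$ exactly absorbs what would be the Tate-twisted correction in the classical (non-log) blow-up formula. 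Concretely, once one enlarges the log structure by an auxiliary divisor $D$ so that the defining equations of $Z_0\cup D$ through $Y_0$ generate the conormal bundle of $Y_0$, the pullback $\pi^\ast\Omega_{Z/S}(\log(Z_0+D))\to\Omega_{X/S}(\log(X_0+\widetilde D))$ is an isomorphism of \emph{sheaves}; an induction via the residue exact sequence strips $D$ back off.

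Even granting your formula, the argument has a genuine gap at the induction for $\mathbf Y_0$. The theorem is a \emph{conditional} statement --- degeneration for $\mathbf Z_0$ iff degeneration for $\mathbf X_0$ --- not an unconditional one, so ``induction on dimension'' produces nothing: no lower-dimensional instance of the theorem yields unconditional $E_1$-degeneration for $\mathbf Y_0$. In characteristic $p$ this is fatal, since $Y_0$ is an essentially arbitrary smooth proper $k$-variety (possibly with trivial induced log structure), and Hodge-to-de Rham degeneration can simply fail for such varieties; your base case ``$\dim\le 1$'' does not propagate upward. Your local model is also off: admissibility does not force $Y_0$ to lie in exactly one component of $Z_0$ --- it may sit in the intersection of several, and the paper's induction keeps track of this through the number $l_{Y_0}(Z_0)$.
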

We can make a step further by assuming a conjecture in resolution of singularities.  
\begin{cor*}[Corollary \ref{cor_degeneration_E1}]
Notation as above. Assume that the functorial embedded resolution of singularities applies over $\mathbb{Z}$\cite[\S 2.2.6]{Abr2016}. Then the $E_1$-degeneration property of the special fiber of $X/S$ depends only on its generic fiber $X_{\eta}$. Namely, for another semistable integral model $Y$ over $S$ of $X_\eta$, $E_1$-degeneration for $\bf X_0/k$ holds if and only if it holds  for $\bf Y_0/k$.
\end{cor*}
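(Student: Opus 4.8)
The plan is to deduce the corollary from Theorem \ref{thm A} by the standard ``weak factorization'' philosophy: any two semistable integral models of a fixed smooth proper generic fiber $X_\eta$ over $S$ should be connected by a finite chain of admissible blow-ups (and their inverses), after which the theorem gives the equivalence of $E_1$-degeneration along the chain, hence between the two endpoints. Concretely, given two semistable models $X$ and $Y$ of $X_\eta$, one first forms a suitable model dominating both. Since $X$ and $Y$ agree over the generic point $\eta$, the closure $\Gamma$ of the diagonal image of $X_\eta$ in $X\times_S Y$ is an $S$-scheme with the same generic fiber, together with projections $\Gamma\to X$ and $\Gamma\to Y$ that are isomorphisms over $\eta$. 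The issue is that $\Gamma$ need not be a semistable family. This is exactly where the hypothesis enters.

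First I would invoke functorial embedded resolution of singularities over $\Z$ in the form cited from \cite[\S2.2.6]{Abr2016} to replace $\Gamma$ by a semistable family $W$ over $S$, with proper birational $S$-morphisms $W\to X$ and $W\to Y$ which are isomorphisms over $\eta$; functoriality (equivariance under smooth morphisms, compatibility with localization and with the open immersion of the generic fiber) is what ensures these morphisms remain isomorphisms on the already-semistable locus over $\eta$, and in particular over $X_\eta$. Next I would reduce to the case of a single proper birational $S$-morphism $f\colon W\to X$ between semistable families which is an isomorphism over $\eta$: if I can show $E_1$-degeneration for $\bf W_0$ is equivalent to $E_1$-degeneration for $\bf X_0$ (and likewise for $\bf Y_0$), then chaining the two equivalences through $W$ finishes the proof.

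For that reduction I would again appeal to functorial resolution, this time to factor $f\colon W\to X$ as a composition of admissible blow-ups. The morphism $f$ is projective birational and an isomorphism over the dense open $X_\eta\subset X$, so its non-flat/non-isomorphism locus is contained in the special fiber $X_0$; running the functorial principalization/embedded-resolution algorithm on the relevant ideal sheaf (or, more precisely, using that the algorithm produces $f$ itself as an iterated blow-up with admissible centers, the centers being automatically regular and meeting $X_0$ transversally because the intermediate schemes are semistable by construction and the algorithm only blows up ``allowable'' centers relative to the boundary divisor $X_0$) expresses $W$ as the end of a tower $W=W_N\to W_{N-1}\to\cdots\to W_0=X$, each step an admissible blow-up of a semistable family over $S$. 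Theorem \ref{thm A} applied to each step shows $E_1$-degeneration for $\bf (W_i)_0$ is equivalent to that for $\bf (W_{i-1})_0$; composing, $E_1$-degeneration for $\bf W_0$ is equivalent to that for $\bf X_0$. Applying the same argument to $W\to Y$ and combining gives the corollary.

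The main obstacle is making precise the claim that the resolution algorithm, applied to dominate two given semistable models, stays within the category of semistable families at every intermediate stage and uses only admissible centers — i.e. that the centers chosen by the functorial algorithm are regular and have normal crossings with the special fiber in the sense of Definition \ref{defn_admissible_blow-up}. This is plausible because the boundary divisor $X_0$ is itself a normal crossings divisor on each semistable $W_i$ and the embedded-resolution algorithm is designed to respect a normal crossings boundary (all its centers have normal crossings with the accumulated exceptional-plus-boundary divisor), but verifying it requires unwinding the precise formulation of functorial resolution over $\Z$ in \cite{Abr2016}, including that it commutes with the base change to $S$ and is trivial where the scheme is already log smooth over $S$. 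I would handle this by citing the structural properties of the algorithm (equivariance, compatibility with smooth morphisms, and the ``no unnecessary blow-ups'' property that leaves the semistable locus untouched) rather than re-deriving them, and then checking by hand only that an admissible center in the sense of Definition \ref{defn_admissible_blow-up} is exactly what the algorithm's regularity-plus-normal-crossings-with-boundary condition specializes to in our situation.
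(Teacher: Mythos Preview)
Your overall strategy matches the paper's: connect the two semistable models by a chain of admissible blow-ups and blow-downs, and apply Theorem~\ref{thm_degeneration_E1} at each step. The paper packages this as Conjecture~\ref{conj} (a weak factorization statement for semistable families over $S$), notes that it follows from functorial embedded resolution over $\Z$ via \cite[Theorem~1.3.3(2)]{Abr2016}, and then the corollary is immediate.

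Where your proposal diverges is in the shape of the chain. You aim for a ``roof'' $W$ dominating both $X$ and $Y$, and then want to factor each of the morphisms $W\to X$ and $W\to Y$ as a composition of admissible blow-ups. That last step is \emph{strong} factorization of a projective birational morphism, and it is not known to follow from functorial resolution, even in characteristic~$0$. Principalization of the ideal $I$ with $W=\mathrm{Bl}_I X$ does not yield $W$ back; it yields some $X_n\to X$ (by admissible blow-ups) together with a further morphism $X_n\to W$, and you are no closer to factoring $W\to X$ than before. So the sentence ``the algorithm produces $f$ itself as an iterated blow-up with admissible centers'' is the gap.

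The remedy is exactly what the paper does: invoke \emph{weak} factorization rather than strong. The output of \cite{Abr2016} (conditionally over $\Z$, unconditionally in characteristic~$0$ via \cite{Abr2002}) is a zigzag $X=V_0\dashrightarrow V_1\dashrightarrow\cdots\dashrightarrow V_n=Y$ in which each $V_i$ is semistable over $S$ and each arrow or its inverse is an admissible blow-up. Theorem~\ref{thm_degeneration_E1} is symmetric in the two directions, so it transports $E_1$-degeneration across every link of the zigzag regardless of orientation. Your ``main obstacle'' paragraph about admissibility of centers is a genuine point, but it is subsumed in the statement of weak factorization: the theorem already guarantees that the centers are regular and have normal crossings with the boundary divisor, which here is the special fiber.
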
 

On the other hand, the behavior of Problem \ref{prob_Illusie} under admissible blow-ups is more subtle.
\begin{thm*}[Corollary \ref{cor_decompose_blow-up} and \S4, Example 1, 2]
	Let $R$ be a discrete valuation ring and $Z$ a semistable family over $S$. Let $X$ be an admissible blow-up of $Z$. The de Rham decomposability of ${\bf X_0}$ implies that of ${\bf Z_0}$, but not vise-versa.
\end{thm*}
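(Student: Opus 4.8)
The plan is to establish the implication and the failure of its converse separately, since they are of entirely different natures. For the implication ``$\mathbf{X_0}$ DR-decomposable $\Rightarrow$ $\mathbf{Z_0}$ DR-decomposable'', I would work with the relative Frobenius $F$ and compare the logarithmic de Rham complexes of $\mathbf{Z_0}$ and $\mathbf{X_0}$ via the admissible blow-up morphism $\pi\colon X\to Z$. The key point is that an admissible blow-up, whose center $Y_0\subset Z_0$ is regular and meets $Z_0$ normally, induces on special fibers a log blow-up in the sense of Kato, and for such a morphism one has a canonical quasi-isomorphism $\Omega^{\bullet}_{\mathbf{Z_0}/\mathbf{k}}\xrightarrow{\ \sim\ } R\pi_{0*}\Omega^{\bullet}_{\mathbf{X_0}/\mathbf{k}}$ (indeed $\sO_{Z_0}\xrightarrow{\sim}R\pi_{0*}\sO_{X_0}$ and this propagates through the Hodge filtration). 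First I would record this quasi-isomorphism, compatibly with the truncation $\tau_{<p}$ and with pushforward along the relative Frobenius, so that $\tau_{<p}F_{*}\Omega^{\bullet}_{\mathbf{Z_0}/\mathbf{k}}\cong R\pi'_{0*}\bigl(\tau_{<p}F_{*}\Omega^{\bullet}_{\mathbf{X_0}/\mathbf{k}}\bigr)$ in $D(Z'_0)$, where $\pi'_0\colon X'_0\to Z'_0$ is the Frobenius twist of $\pi_0$. Then, assuming $\tau_{<p}F_{*}\Omega^{\bullet}_{\mathbf{X_0}/\mathbf{k}}\cong\bigoplus_{i<p}\Omega^i_{\mathbf{X'_0}/\mathbf{k}}[-i]$, applying $R\pi'_{0*}$ and using that $R\pi'_{0*}\Omega^i_{\mathbf{X'_0}/\mathbf{k}}\cong\Omega^i_{\mathbf{Z'_0}/\mathbf{k}}$ (the same log-blow-up invariance of log differentials, now over $\mathbf{k}$ after Frobenius twist) yields the desired decomposition of $\tau_{<p}F_{*}\Omega^{\bullet}_{\mathbf{Z_0}/\mathbf{k}}$ in $D(Z'_0)$. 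This is essentially the content referenced as Corollary \ref{cor_decompose_blow-up}.

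The nontrivial technical input in the paragraph above is the log-blow-up invariance of the logarithmic Hodge complex: $R\pi_{0*}\Omega^i_{\mathbf{X_0}/\mathbf{k}}\cong\Omega^i_{\mathbf{Z_0}/\mathbf{k}}$ and the corresponding statement after Frobenius twist. I would prove this by a local computation. Étale-locally on $Z$, the pair $(Z_0, Y_0)$ looks like a product of standard coordinate hyperplane arrangements, so the admissible blow-up is (étale-locally) a toric blow-up of a smooth toric variety along a torus-invariant center; for toric log blow-ups the identity $R\pi_{*}\Omega^{\bullet}_{\log}\cong\Omega^{\bullet}_{\log}$ is standard (it reduces to $R\pi_*\sO=\sO$ on the structure sheaf and to the fact that log differentials on a toric variety are pulled back from the cone data, which is a combinatorial invariant unchanged by subdivision). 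The main obstacle in this half is bookkeeping: ensuring the local charts glue, that the blow-up really is log-étale-locally toric given only ``regular, normal crossings with $Z_0$'', and that all isomorphisms are compatible with the relative Frobenius so the derived-category splitting descends. None of these steps is deep, but they must be assembled carefully; this is where I expect most of the writing to go.

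For the failure of the converse, no general argument is possible — it is a pure existence statement — so the plan is simply to point to the explicit examples of \S4. I would state that in Example 1 and Example 2 one produces a semistable family $Z$ over $W(k)$ together with an admissible blow-up $X$ such that $\mathbf{Z_0}$ is DR-decomposable while $\mathbf{X_0}$ is not; by Theorem \ref{thm A} the $E_1$-degeneration property is the same for both, so these examples cannot be detected at the level of spectral sequences and genuinely separate DR-decomposability from $E_1$-degeneration. The key feature exploited in those examples is that blowing up introduces new cohomology classes (exceptional divisors) which obstruct the splitting of $\tau_{<p}F_*\Omega^{\bullet}$ even though the abstract degeneration persists; concretely one computes the obstruction class in $\operatorname{Ext}^2$ (or the relevant gerbe-type invariant governing decomposability) and shows it is nonzero for $\mathbf{X_0}$ while it vanishes for $\mathbf{Z_0}$. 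I would defer the verification entirely to \S4 and only remark here that this simultaneously answers Illusie's Problem \ref{prob_Illusie} in the negative.

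Thus the proof structure is: (i) prove log-blow-up invariance of the logarithmic Hodge complex locally (toric model) and globally; (ii) push the hypothesized decomposition of $\tau_{<p}F_*\Omega^{\bullet}_{\mathbf{X_0}/\mathbf{k}}$ down along $\pi'_0$ to get the decomposition for $\mathbf{Z_0}$, giving the forward implication; (iii) invoke the explicit counterexamples of \S4 for the non-reversibility. The serious mathematical work is concentrated in step (i); steps (ii) and (iii) are, respectively, a formal consequence and a citation.
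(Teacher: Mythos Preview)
Your overall architecture matches the paper's: establish $R\pi_{*}\Omega^i_{\mathbf{X_0}/\mathbf{k}}\simeq\Omega^i_{\mathbf{Z_0}/\mathbf{k}}$ (Proposition \ref{prop_KEY}), push the assumed decomposition down along $\pi$, and then cite \S4 for the failure of the converse. The gap is in your proposed proof of the key isomorphism.

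You assert that the admissible blow-up is, \'etale-locally, a toric (log-\'etale) blow-up, so that log differentials are pulled back and the projection formula plus $R\pi_*\sO\simeq\sO$ finish the job. This is not correct. Take $Z$ smooth over $W$ with $Z_0$ smooth, and blow up a smooth $Y_0\subset Z_0$ of codimension $\geq 2$. The log structure on $Z$ comes only from the divisor $Z_0$; the defining equations of $Y_0$ transverse to $Z_0$ are \emph{not} in the log structure, so the ideal of $Y_0$ is not a monomial (log) ideal and the blow-up is not log-\'etale. Concretely, on a chart where a transverse coordinate $x$ becomes part of the exceptional locus, $\Omega^1_{\mathbf{X}/\mathbf{S}}$ acquires $\tfrac{dx}{x}$ while $\pi^*\Omega^1_{\mathbf{Z}/\mathbf{S}}$ only contributes $dx$; the pullback map is a strict inclusion, not an isomorphism. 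So ``log differentials are pulled back from the cone data'' fails, and this is not mere bookkeeping.

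The paper repairs exactly this. It introduces an auxiliary strict normal crossing divisor $D$ transverse to $Z_0$ and containing enough of the conormal directions of $Y_0$; once $D$ is large enough (Step 1 of the proof of Proposition \ref{prop_KEY}), the enhanced log structure $(Z,Z_0+D)$ does make $\pi$ log-\'etale, and your projection-formula argument goes through for $\Omega^i_{Z/S}(\log(Z_0+D))|_{Z_0}$. One then removes the components of $D$ one at a time using the residue exact sequence of Lemma \ref{lem_residue_sequence}, running a double induction on $\dim Z$ and the number of components of $D$ (Step 2). The case $i=0$ is handled separately via the Mayer--Vietoris resolution of $\sO_{X_0}$ by the strata $\sO_{X_I}$, each of which maps to $Z_I$ by a smooth blow-up or a projective bundle. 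None of this is deep, but it is genuinely more than gluing toric charts.

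For the counterexamples: the paper does not compute an $\mathrm{Ext}^2$ obstruction directly. It first proves (Theorem \ref{thm_decom_lifting}) that DR-decomposability of $\mathbf{X_0}$ is equivalent to liftability of $\mathbf{X_0}$ over $\mathbf{W_2}=(\mathrm{Spec}\,W_2,\,1\mapsto 0)$, and then produces admissible blow-ups $X\to Z$ with $Z$ \emph{smooth} over $W$ (so $\mathbf{Z_0}$ is trivially DR-decomposable) such that $\mathbf{X_0}$ is not $\mathbf{W_2}$-liftable. The mechanism is geometric: by Lemma \ref{lem_w2lifting} a $\mathbf{W_2}$-lift of $\mathbf{X_0}$ would lift each stratum, hence (via Cynk--van Straten or the Frobenius-lifting trick on Tango--Raynaud curves) would force a $W_2$-lift of the center $Y_0$ or of the pair $(Z_0,Y_0)$, which is arranged to be impossible. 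Your sentence ``blowing up introduces new cohomology classes which obstruct the splitting'' does not capture this; the obstruction is to lifting, and is detected on the geometry of the exceptional component rather than by an abstract $\mathrm{Ext}$ class.
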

Therefore, our examples show that, even when the dimension of $X_0$ is strictly less than the characteristic of $k$, DR-decomposability is definitely \emph{stronger} than $E_1$ degeneration of the Hodge to de Rham spectral sequence, a fact that seems to have been previously unknown.

The counterexamples of W. Lang \cite{Lang1995} to Problem \ref{$E_1$-degeneration problem} with $K/\Q_p$ wildly ramified and those constructed in \S4 to Prolem \ref{prob_Illusie} with $K/\Q_p$ unramified motivate us to introduce a stronger lifting condition on log schemes, which shall guarantee the truth of Problem \ref{prob_Illusie} over a general $p$-adic base. Now let $R$ be the ring of integers of a $p$-adic local field. Equipping $S$ with the log structure given by the maximal ideal of $R$ and $\spec(W(k)[[t]])$ with the log structure given by the divisor $(t)$, one finds that $S$ is naturally a closed log subscheme of  $\spec(W(k)[[t]])$ by mapping $t$ to a uniformizer of $R$ (see \S3). 

\begin{prop*}[Proposition \ref{sufficient condition on illusie's problem}]\label{thm B}
Notation as above. Let $X$ be a semistable family over $R$. If $X$ is liftable to the $\spec(W(k)[[t]])$ as log schemes, then Problem \ref{prob_Illusie} holds true for $\bf X_0/k$. Consequently, if $X$ has relative dimension $<p$, then Problem \ref{$E_1$-degeneration problem} holds for ${\bf X_0/k}$.  
\end{prop*}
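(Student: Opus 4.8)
The plan is to run the Deligne--Illusie \cite{Del_Ill1987} argument in its logarithmic (Kato) incarnation, using the lifting hypothesis to supply the one ingredient that is otherwise missing: a flat lift of the special fibre over the \emph{honest} $W_2$-thickening of the log point ${\bf k}$, rather than over the thickening carried by $S\bmod p^2$. By definition it suffices to produce a quasi-isomorphism $\tau_{<p}F_*\Omega^\bullet_{{\bf X_0}/{\bf k}}\simeq\bigoplus_{0\le i<p}\Omega^i_{X_0'/{\bf k}}[-i]$ in $D(X_0')$, where $X_0'$ is the Frobenius twist of $X_0$ and $F\colon X_0\to X_0'$ the relative Frobenius. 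I would set ${\bf T}=(\Spec W(k)[[t]],(t))$, with reductions ${\bf T}_1=(\Spec k[[t]],(t))$ and ${\bf T}_2=(\Spec W_2(k)[[t]],(t))$; the base ${\bf T}_2$ carries the Frobenius lift $\Phi$ given by the Witt Frobenius on $W_2(k)$ together with $t\mapsto t^p$, which lifts the absolute Frobenius of ${\bf T}_1$. A first observation: under the closed immersion $S\hookrightarrow{\bf T}$, $t\mapsto\pi$, the closed point of $S$ maps to the closed point of ${\bf T}$ with induced log structure the standard log point ${\bf k}$, and the inclusion ${\bf k}\hookrightarrow{\bf T}$ factors simultaneously through $S\hookrightarrow{\bf T}$ and through the divisor $\{t=0\}\subset{\bf T}$.

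The crux is then to manufacture a lift of ${\bf X_0}$ over ${\bf k}_2:=(\Spec W_2(k),\N,1\mapsto 0)$, the divisor $\{t=0\}$ inside ${\bf T}_2$. Let $\widetilde X\to{\bf T}$ be the assumed log smooth lift of $X\to S$, set $\widetilde X_2=\widetilde X\times_{\bf T}{\bf T}_2$, and put $\widehat{\bf X}_0:=\widetilde X_2\times_{{\bf T}_2,\{t=0\}}{\bf k}_2$. This is log smooth over ${\bf k}_2$ (base change) and flat over $\Z/p^2$ (flatness of $\widetilde X/{\bf T}$), and its reduction modulo $p$ is $\widetilde X\times_{\bf T}{\bf k}$, which equals ${\bf X_0}$ because ${\bf k}\hookrightarrow{\bf T}$ factors through $S\hookrightarrow{\bf T}$, along which $\widetilde X$ restricts to $X$ and hence to ${\bf X_0}$ over ${\bf k}$. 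I would stress that $\widehat{\bf X}_0$ lives over ${\bf k}_2$, i.e.\ the thickening with $1\mapsto 0$: the naive reduction $\widetilde X\times_{\bf T}(S\bmod p^2)$ lives only over the thickening $1\mapsto\pi$, and a lift over the latter is \emph{not} what the decomposition theorem needs --- indeed $(\Spec W_2(k),\N,1\mapsto\pi)$ carries no lift of the absolute Frobenius of ${\bf k}$. This is precisely why the hypothesis is a genuine constraint even when $R=W(k)$, and it is the heart of the argument.

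With $\widehat{\bf X}_0$ in hand I would conclude as follows. The morphism ${\bf X_0}\to{\bf k}$ is log smooth of Cartier type, as the special fibre of any semistable family is; using the canonical Frobenius lift of ${\bf k}_2$, the lift $\widehat{\bf X}_0$ induces a $\Z/p^2$-flat log smooth lift of $X_0'$ over ${\bf k}_2$ (alternatively, realize it as the $\{t=0\}$-fibre of $\widetilde X_2\times_{{\bf T}_2,\Phi}{\bf T}_2$). By the logarithmic Deligne--Illusie decomposition theorem (the logarithmic analogue of \cite{Del_Ill1987}), the existence of such a lift yields the required quasi-isomorphism in $D(X_0')$; hence ${\bf X_0}$ is DR-decomposable, which answers Problem~\ref{prob_Illusie} affirmatively for ${\bf X_0}/k$. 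If in addition $X\to S$ has relative dimension $d<p$, then $\Omega^i_{{\bf X_0}/{\bf k}}=\Omega^i_{X_0'/{\bf k}}=0$ for $i\ge p$, so the truncation is vacuous and $F_*\Omega^\bullet_{{\bf X_0}/{\bf k}}\simeq\bigoplus_i\Omega^i_{X_0'/{\bf k}}[-i]$; taking hypercohomology over the proper $k$-scheme $X_0'$ (with $F$ affine) gives $H^n_{dR}({\bf X_0}/{\bf k})\cong\bigoplus_{i+j=n}H^j(X_0',\Omega^i_{X_0'/{\bf k}})$, and since $X_0'\to X_0$ is a base change along the flat Frobenius of the perfect field $k$ the total Hodge and de Rham dimensions coincide, which forces the spectral sequence of Problem~\ref{$E_1$-degeneration problem} to degenerate at $E_1$.

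I expect the main obstacle to be the invocation of the logarithmic Deligne--Illusie theorem in the third step: verifying the Cartier-type hypothesis for ${\bf X_0}/{\bf k}$, producing local lifts of Frobenius over ${\bf k}_2$, and assembling the local splittings under the $\tau_{<p}$ truncation --- and, throughout, keeping scrupulous track of the distinction between a lift over ${\bf k}_2$ ($1\mapsto0$) and one over the log thickening $1\mapsto\pi$ coming from $S$. If a ready-made ``absolute'' log Deligne--Illusie statement over ${\bf k}_2$ is not available in the form required, the fallback is to prove the decomposition relatively over the base ${\bf T}_1$ (equipped with its lift ${\bf T}_2$ and Frobenius $\Phi$), obtaining a splitting of $\tau_{<p}F_*\Omega^\bullet_{\widetilde X_1/{\bf T}_1}$ in $D(\widetilde X_1')$, and then to check that it is compatible with restriction to the fibre $\{t=0\}$; establishing that base-change compatibility, rather than any single computation, is where the real work would then lie.
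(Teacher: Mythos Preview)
Your proof is correct and follows essentially the same route as the paper's. The paper's argument is condensed into two sentences: it observes that ${\bf W_2}=(\Spec W_2(k),1\mapsto 0)$ sits inside ${\bf P}=(\Spec W(k)[[t]],1\mapsto t)$ as the exact closed log subscheme cut out by $p^2$ and $t$, pulls the hypothesised ${\bf P}$-lift back along this immersion to obtain a ${\bf W_2}$-lift of ${\bf X_0}$, and then invokes the criterion (Theorem~\ref{thm_decom_lifting}: ${\bf X_0}$ is DR-decomposable if and only if it lifts over ${\bf W_2}$). Your two-step restriction (mod $p^2$, then $t=0$) is exactly this pullback, and your subsequent passage from a lift of ${\bf X_0}$ to a lift of ${\bf X_0'}$ via the Frobenius lift of ${\bf k}_2$, followed by Kato's decomposition theorem, is precisely the content of Theorem~\ref{thm_decom_lifting}; you have simply unpacked that criterion rather than citing it. Your careful emphasis on the distinction between the thickening $1\mapsto 0$ and the thickening $1\mapsto\pi$ is exactly the point of \S\ref{section_log}, and your worry in the final paragraph is unfounded: Kato's theorem \cite[Theorem~4.12]{KKato1988} applies directly over ${\bf k}_2$ once one has the Frobenius lift $F_{{\bf W_2}}$ described there, so no fallback to the relative setting over ${\bf T}_1$ is needed.
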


The paper is organized as follows: In Section \S2, we prove the $E_1$-degeneration theroem (Theorem \ref{thm_degeneration_E1}) for admissible blow-ups; In Section \S3, we provide a criterion for the DR-decomposability (Theorem \ref{thm_decom_lifting}) and then a sufficient condition for DR-decomposability (Proposition \ref{sufficient condition on illusie's problem}). We deduce Proposition \ref{reproof on KN} from the criterion by mod $p$ reduction.  In Appendix, we include some preliminaries on semistable reductions which are mainly used in \S2. 

\section{An $E_1$-degeneration theorem for admissible blow-ups}
It is $E_1$-degeneration of the Hodge to de Rham spectral sequence which guarantees Kodaira's vanishing theorem \cite{EV1986}. In this section, we aim to establish an $E_1$-degeneration property for semistable families. 

Let $R$ be a DVR with the residue field $k$ and $Z$ a semistable family over $S=\spec(R)$. Let $X/S$ be an admissible blow-up of $Z$ with the blow-up center $Y_0\subset Z_0$ (Definition \ref{defn_admissible_blow-up}). Let $\bf Z_0\to k$ and $\bf X_0\to k$ be the naturally associated log schemes. 
\begin{thm}\label{thm_degeneration_E1}
Notation as above. The following statements are equivalent:
	\begin{enumerate}
		\item the logarithmic Hodge to de Rham spectral sequence
		$$E_1^{ij}=H^j(Z_0,\Omega^i_{{\bf Z_0}/{\bf k}})\Rightarrow \H^{i+j}(Z_0,\Omega_{{\bf Z_0}/{\bf k}}^{\bullet})$$
		degenerates at $E_1$.
		\item the logarithmic Hodge to de Rham spectral sequence
		$$E_1^{ij}=H^j(X_0,\Omega^i_{{\bf X_0}/{\bf k}})\Rightarrow \H^{i+j}(X_0,\Omega_{{\bf X_0}/{\bf k}}^{\bullet})$$
		degenerates at $E_1$.
	\end{enumerate} 
\end{thm}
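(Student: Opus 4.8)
The plan is to prove the stronger statement that $\pi\colon X\to Z$ induces an isomorphism of the two logarithmic Hodge to de Rham spectral sequences as a whole, so that (1) and (2) are not merely equivalent but literally the same. The key is the claim that the pullback morphism of filtered complexes
$$
\bigl(\Omega^\bullet_{{\bf Z}/{\bf S}},\sigma_{\geq\bullet}\bigr)\longrightarrow R\pi_*\bigl(\Omega^\bullet_{{\bf X}/{\bf S}},\sigma_{\geq\bullet}\bigr)
$$
(with $\sigma_{\geq\bullet}$ the stupid/Hodge filtration) is a filtered quasi-isomorphism, i.e.\ that $\pi_*\Omega^p_{{\bf X}/{\bf S}}\cong\Omega^p_{{\bf Z}/{\bf S}}$ and $R^q\pi_*\Omega^p_{{\bf X}/{\bf S}}=0$ for all $q>0$ and all $p$. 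Granting this, one restricts to the closed fibre: $X/S$ is flat and $\pi^{-1}(Z_0)=X_0$ scheme-theoretically, so $\sO_{X_0}=L\pi^*\sO_{Z_0}$ and $\Omega^p_{{\bf X}/{\bf S}}\otimes_{\sO_X}\sO_{X_0}=\Omega^p_{{\bf X_0}/{\bf k}}$; base change along the closed immersions together with the projection formula then gives $R\pi_{0*}\Omega^p_{{\bf X_0}/{\bf k}}\cong\bigl(R\pi_*\Omega^p_{{\bf X}/{\bf S}}\bigr)\otimes^{\mathbf L}_{\sO_Z}\sO_{Z_0}\cong\Omega^p_{{\bf Z_0}/{\bf k}}$ (the last higher Tor vanishes since $\varpi$ is a non-zero-divisor). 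Hence $R\pi_{0*}\bigl(\Omega^\bullet_{{\bf X_0}/{\bf k}},\sigma_{\geq\bullet}\bigr)\cong\bigl(\Omega^\bullet_{{\bf Z_0}/{\bf k}},\sigma_{\geq\bullet}\bigr)$, the Leray spectral sequence for $\pi_0$ collapses, and one gets $H^j(X_0,\Omega^i_{{\bf X_0}/{\bf k}})\cong H^j(Z_0,\Omega^i_{{\bf Z_0}/{\bf k}})$ and $\H^{i+j}(X_0,\Omega^\bullet_{{\bf X_0}/{\bf k}})\cong\H^{i+j}(Z_0,\Omega^\bullet_{{\bf Z_0}/{\bf k}})$ compatibly with the Hodge filtrations, so the two Hodge to de Rham spectral sequences are isomorphic and (1)$\Leftrightarrow$(2).

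The identity $\pi_*\Omega^p_{{\bf X}/{\bf S}}\cong\Omega^p_{{\bf Z}/{\bf S}}$ is the soft part. Since $Z$ is regular, $\Omega^p_{{\bf Z}/{\bf S}}$ is locally free, hence reflexive; $\pi$ is an isomorphism over $Z\setminus Y_0$ with $\mathrm{codim}(Y_0,Z)\geq 2$; and $\pi_*\Omega^p_{{\bf X}/{\bf S}}$ is torsion-free. Therefore the pullback map $\Omega^p_{{\bf Z}/{\bf S}}\to\pi_*\Omega^p_{{\bf X}/{\bf S}}$, being an isomorphism away from codimension $2$, is an isomorphism; it is strict for $\sigma_{\geq\bullet}$ and compatible with $d$, being induced by the morphism ${\bf X}\to{\bf Z}$ of log schemes over ${\bf S}$. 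Thus the whole proof comes down to the vanishing of the higher direct images.

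This vanishing is the main obstacle, and I would establish it étale-locally using the normal form of the Appendix. Étale-locally $Z=\Spec R[x_1,\dots,x_a,y_1,\dots,y_{b-1},z_1,\dots,z_d]/(x_1\cdots x_a-\varpi)$ with $Z_0=\{x_1\cdots x_a=0\}$, and admissibility lets one reduce to the case $Y_0=\{x_1=y_1=\cdots=y_{b-1}=0\}$, a regular subscheme lying in the single component $\{x_1=0\}$ and transverse to the remaining ones (a centre meeting two components of $Z_0$ would make $\mathrm{Bl}_{Y_0}Z$ non-reduced along the exceptional locus, violating semistability). A chart computation shows $X=\mathrm{Bl}_{Y_0}Z$ is semistable, with $X_0=\widetilde{Z_0}\cup E$ and $\rho\colon E\to Y_0$ a $\P^{b-1}$-bundle; using the standard identification of the relative log differentials of a semistable scheme restricted to a component of its special fibre with the log differentials of that smooth component along its double locus, one finds that $\Omega^1_{{\bf X}/{\bf S}}$ restricted to a fibre $F\cong\P^{b-1}$ of $\rho$ is (after étale localization, a direct sum) $\sO_F^{\oplus\dim Y_0}\oplus\Omega^1_{\P^{b-1}}(\log H)$, where $H=\widetilde{D_1}\cap F$ is a hyperplane. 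Hence $\Omega^p_{{\bf X}/{\bf S}}|_F$ is a direct sum of sheaves $\bigl(\Omega^i_{\P^{b-1}}(\log H)\bigr)^{\oplus N}$, and since $H^{>0}\bigl(\P^{b-1},\Omega^i_{\P^{b-1}}(\log H)\bigr)=0$ — which follows from the residue sequence $0\to\Omega^i_{\P^{b-1}}\to\Omega^i_{\P^{b-1}}(\log H)\to\Omega^{i-1}_{H}\to 0$ together with the bijectivity of cup product with the hyperplane class, equivalently from the vanishing of the reduced de Rham cohomology of $\A^{b-1}$ and $E_1$-degeneration for the pair $(\P^{b-1},H)$ — we obtain $H^{>0}(F,\Omega^p_{{\bf X}/{\bf S}}|_F)=0$, and the same over all infinitesimal thickenings of $F$ and over the deeper strata of $Y_0$ (where $\Omega^1_{{\bf X}/{\bf S}}|_F$ has the same shape). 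The theorem on formal functions then yields $R^{>0}\pi_*\Omega^p_{{\bf X}/{\bf S}}=0$. The essential point — and the place where admissibility is used — is exactly this local vanishing: because the exceptional divisor $E$ enters the logarithmic structure of the special fibre as a new component, the ``extra'' terms that occur in the classical computation of $R\pi_*\Omega^p$ for a smooth blow-up simply do not arise here.
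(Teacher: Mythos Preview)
Your strategy---prove $R\pi_*\Omega^p_{\bf X/S}\cong\Omega^p_{\bf Z/S}$ on the total space via reflexivity and the formal functions theorem, then base-change to the special fibre---is sound and genuinely different from the paper's. The paper instead works directly on the special fibre and establishes $R\pi_{0*}\Omega^i_{\bf X_0/k}\cong\Omega^i_{\bf Z_0/k}$ (Proposition~\ref{prop_KEY}) by a residue-sequence induction: one adjoins auxiliary smooth divisors $D'$ through $Y_0$ until the centre is cut out exactly by the components of $Z_0\cup D$, at which point $\pi^*\Omega^1_{Z/S}(\log(Z_0+D))\to\Omega^1_{X/S}(\log(X_0+\tilde D))$ is an isomorphism and the projection formula reduces everything to $R\pi_*\sO_{X_0}\cong\sO_{Z_0}$; each inductive step peels off one divisor via the residue exact sequence. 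Your route is more geometric (fibrewise cohomology on $\P^{b-1}$) and, once the vanishing on fibres is in hand, gives the statement uniformly over the base; the paper's route avoids any explicit cohomology of $\Omega^i_{\P^n}$ beyond $H^{>0}(\sO)=0$ and handles the bookkeeping via a clean double induction on $\dim Z$ and the number of auxiliary log divisors.

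That said, three points in your sketch need repair. First, $\Omega^1_{\bf X/S}|_F$ is not a direct sum $\sO_F^{\dim Y_0}\oplus\Omega^1_F(\log H)$ but an \emph{extension} of the latter by the former, coming from the relative cotangent sequence of $\rho\colon E\to Y_0$; this does not affect the cohomology vanishing (both graded pieces, and hence all graded pieces of $\wedge^p$, have $H^{>0}=0$), but the statement as written is false. Second, your parenthetical alternative---``equivalently from the vanishing of the reduced de Rham cohomology of $\A^{b-1}$''---fails in positive characteristic: $H^i_{\rm dR}(\A^n_k)$ is infinite-dimensional for $i>0$ when $\mathrm{char}\,k>0$, so there is no such equivalence. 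The residue/Gysin argument you give first is the correct, characteristic-free one. Third, and most substantively, the formal functions step requires the vanishing of $H^{>0}$ on all infinitesimal neighbourhoods of $F$ in $X$, and the conormal sheaf $\sI_F/\sI_F^2$ is an extension of $\sO_F^{\dim Y_0}$ by $\sO_F(1)$, so the successive quotients introduce twists by $\sO_F(j)$ for all $j\ge 0$. You therefore need $H^{>0}\bigl(\P^{b-1},\Omega^i(\log H)(j)\bigr)=0$ for every $j\ge 0$, not just $j=0$; this follows from Bott's formula combined with the residue sequence, but you have not verified it.
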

By the flat base change property of cohomologies, it suffices to assume that $k$ is algebraically closed. We shall use this assumption throughout this section.

The blow-up induces the morphism  $\pi:{\bf X_0}\rightarrow {\bf Z_0}$ of log schemes over $\bf k$. It induces the following morphism for each $i\geq 0$:
$$\pi^{\ast i}:\Omega^i_{{\bf Z_0}/{\bf k}}\rightarrow R\pi_\ast\Omega^i_{{\bf X_0}/{\bf k}}.$$
Our main technical result in the proof of Theorem \ref{thm_degeneration_E1} is the following


\begin{prop}\label{prop_KEY}
For each $i$, $$\pi^{\ast i}:\Omega^i_{{\bf Z_0}/{\bf k}}\rightarrow R\pi_\ast\Omega^i_{{\bf X_0}/{\bf k}}.$$
is an isomorphism in the derived category $D(Z_0)$.
\end{prop}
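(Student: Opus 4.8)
The plan is to reduce the statement to a purely local computation on the blow-up. Since $\pi$ is proper and the assertion is local on $Z_0$, I would work in a formal or étale neighborhood of a point of the center $Y_0$; away from $Y_0$ the map $\pi$ is an isomorphism and there is nothing to prove. Using the preliminaries on semistable reductions collected in the Appendix (in particular the normal-crossings hypothesis on $Y_0$ relative to $Z_0$), one can choose local coordinates so that $Z$ is étale-locally $\spec R[x_1,\dots,x_n]/(x_1\cdots x_r - \pi_R)$ with $\pi_R$ a uniformizer, $Z_0$ is cut out by $x_1\cdots x_r$, and $Y_0$ is the vanishing locus of a regular sequence of the coordinates (some among $x_1,\dots,x_r$, some among $x_{r+1},\dots,x_n$) chosen compatibly with the normal-crossing condition. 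The log structure on $\mathbf{Z_0}$ is the one pulled back from the divisor $x_1\cdots x_r$ together with the "hollow" direction coming from $\pi_R$, so $\Omega^1_{\mathbf{Z_0}/\mathbf{k}}$ is free with basis $\dlog x_1,\dots,\dlog x_r, dx_{r+1},\dots,dx_n$ subject to the single relation $\sum_{j=1}^r \dlog x_j = 0$.

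The key step is then to identify $R\pi_\ast\Omega^i_{\mathbf{X_0}/\mathbf{k}}$ explicitly. The admissible blow-up $X\to Z$ has special fiber $X_0$ which acquires log structure from its own components, and $\pi^\ast\Omega^1_{\mathbf{Z_0}/\mathbf{k}}\to\Omega^1_{\mathbf{X_0}/\mathbf{k}}$ fits into an exact sequence whose cokernel is supported on the exceptional divisor; the point is that in the \emph{logarithmic} setting this cokernel is much smaller than in the classical case — essentially the log structure "absorbs" the exceptional divisor. I would first treat $i=0$: here $\Omega^0_{\mathbf{Z_0}/\mathbf{k}}=\sO_{Z_0}$ and the claim $R\pi_\ast\sO_{X_0}\cong\sO_{Z_0}$ follows from the fact that $X_0\to Z_0$ has connected fibers that are iterated projective bundles (or, more precisely, from base-changing the equality $R\pi_\ast\sO_X\cong\sO_Z$ for the admissible blow-up of a regular center, using that the center meets $Z_0$ transversally so that the base change is compatible). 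For general $i$, I would argue by a local product decomposition: étale-locally the blow-up of $Z$ along $Y_0$ decomposes the situation into a "blow-up direction" times a trivial factor, reducing to the case where $Y_0$ is a single stratum, and then compute $R\pi_\ast\Omega^i_{\mathbf{X_0}/\mathbf{k}}$ via the projection formula and the known cohomology of $\Omega^\bullet_{\P^m}(\log)$-type sheaves on the exceptional $\P^{m}$-bundle. The wedge-power filtration of $\Omega^1_{\mathbf{X_0}/\mathbf{k}}$ coming from the split inclusion of the pulled-back forms lets one induct on $i$, with the higher direct images of the "new" log forms along the exceptional fibers all vanishing in positive degree, leaving exactly $\pi_\ast$ equal to $\Omega^i_{\mathbf{Z_0}/\mathbf{k}}$ in degree $0$.

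The main obstacle I anticipate is the bookkeeping of the log structures at the points where $Y_0$ meets the singular locus $Z_{0,\mathrm{sing}}$: there the special fiber $X_0$ of the blow-up can gain new irreducible components, its log structure is no longer simply "normal crossings pulled back," and the naive comparison $\pi^\ast\Omega^1_{\mathbf{Z_0}}\to\Omega^1_{\mathbf{X_0}}$ needs the relation $\sum\dlog x_j = 0$ to be propagated correctly through the chart of the blow-up. Controlling this amounts to checking that the exceptional divisor of an admissible blow-up is again a normal-crossings divisor compatible with the log structure — which is exactly what the admissibility hypothesis (regular center, normal crossings with $Z_0$) is designed to guarantee — and then verifying that the relevant relative log cotangent sheaf on the exceptional fiber is the expected twisted bundle. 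Once the local model is pinned down, the vanishing of $R^{>0}\pi_\ast$ of each $\Omega^i_{\mathbf{X_0}/\mathbf{k}}$ and the identification of $\pi_\ast$ are a direct cohomology computation on projective-space bundles, which I would not spell out in full detail here.
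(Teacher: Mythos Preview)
Your overall strategy---localize, handle $i=0$ via $R\pi_*\sO_{X_0}\simeq\sO_{Z_0}$, then treat higher $i$ by a filtration and projection formula---is the natural first instinct and is how one handles the analogous statement for blow-ups of smooth varieties. But the paper takes a genuinely different route for $i>0$, and the place where your sketch diverges from it is also where there is a real gap.

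The paper does not attempt to analyze the cokernel of $\pi^*\Omega^1_{\mathbf{Z_0}/\mathbf{k}}\to\Omega^1_{\mathbf{X_0}/\mathbf{k}}$ directly. Instead it proves a \emph{stronger} family of statements $\mathbf{C}_{n,r}$ involving an auxiliary strict normal crossing divisor $D\nsubseteq Z_0$ with $r$ components, and runs a double induction on $n=\dim Z$ and $r$. The base of the induction (Step~1) is the case where $r$ is \emph{maximal}, i.e.\ the components of $Z_0\cup D$ through $Y_0$ already cut out $Y_0$ scheme-theoretically; in that case a short local computation shows that $\pi^*\Omega^1_{Z/S}(\log(Z_0+D))\to\Omega^1_{X/S}(\log(X_0+\tilde D))$ is an \emph{isomorphism}, so the projection formula together with the $i=0$ case finishes immediately. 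The inductive step (Step~2) peels off one component of $D$ at a time using the residue exact sequence (Lemma~\ref{lem_residue_sequence}), reducing $\mathbf{C}_{n,r}$ to $\mathbf{C}_{n,r+1}$ and $\mathbf{C}_{n-1,*}$. The auxiliary divisor is precisely the device that makes the log cotangent sheaves agree on the nose, so that no cokernel ever has to be computed.

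Your proposal instead asserts a ``split inclusion'' $\pi^*\Omega^1_{\mathbf{Z_0}}\hookrightarrow\Omega^1_{\mathbf{X_0}}$ and a ``local product decomposition'' reducing to a single-stratum model. Neither is justified. Without the auxiliary $D$, the map on log cotangent sheaves has a genuine cokernel supported on the exceptional divisor, and there is no reason for it to split (already in the smooth blow-up case it does not). More seriously, at points where $Y_0$ meets $Z_{0,\mathrm{sing}}$ the exceptional fiber is not a single projective space but a configuration of components, the sheaf restricted there is not a standard twisted $\Omega_{\P^m}$, and the ``product decomposition'' you invoke does not respect the log structure: the relation $\sum_j\dlog x_j=0$ couples the blow-up coordinates to the transverse ones. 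You correctly flag this as the main obstacle, but nothing in your sketch resolves it; the paper's auxiliary-divisor-plus-residue trick is exactly what sidesteps it. A secondary point: your $i=0$ argument via ``base-changing $R\pi_*\sO_X\cong\sO_Z$'' is delicate because the inclusion $Z_0\hookrightarrow Z$ is not flat; the paper instead resolves $\sO_{X_0}$ and $\sO_{Z_0}$ by the strata $\sO_{X_I}$, $\sO_{Z_I}$ and applies Lemma~\ref{lem_blow-up} termwise.
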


\begin{proof}[Proof of Theorem \ref{thm_degeneration_E1}]
By Proposition \ref{prop_KEY}, we have the following consequences
	\begin{enumerate}
		\item the natural morphism
		$$\Omega^{\geq l}_{{\bf Z_0}/{\bf k}}\to \pi_\ast\Omega^{\geq l}_{{\bf X_0}/{\bf k}}$$
		is an isomorphism of complexes, for each $l\geq 0$. 
		\item the natural morphism
		$$\pi_\ast\Omega^{\geq l}_{{\bf X_0}/{\bf k}}\to R\pi_\ast\Omega^{\geq l}_{{\bf X_0}/{\bf k}}$$
		is an isomorphism in $D(Z_0)$, for each $l\geq 0$. 
	\end{enumerate}
	This proves that for each $l\geq 0$, the vertical morphisms in the diagram
	\begin{align}
	\xymatrix{
		\mathbb{H}^i(Z_0,\Omega^{\geq l}_{{\bf Z_0}/{\bf k}})\ar[r]\ar[d]^{\pi_k^\ast}& \mathbb{H}^{i}(Z_0,\Omega_{{\bf Z_0}/{\bf k}}^{\bullet})\ar[d]^{\pi^\ast}\\
		\mathbb{H}^i(X_0,\Omega^{\geq l}_{{\bf X_0}/{\bf k}})\ar[r]& \mathbb{H}^{i}(X_0,\Omega_{{\bf X_0}/{\bf k}}^{\bullet})
	}
	\end{align}
	are isomorphisms. This proves the theorem.
\end{proof}

The following Lemma is well known but the authors cannot find a suitable reference. We give a proof for the convenience of the readers.
\begin{lem}\label{lem_blow-up}
	Let $Y$ be a smooth variety and $Z\subset Y$ be the smooth closed subvariety. Let $p:Y':={\rm Bl}_{Z}Y\to Y$ be the blowing up. Then the canonical morphism
	$$\sO_Y\simeq Rp_\ast\sO_{Y'}$$
	is a quasi-isomorphism.
\end{lem}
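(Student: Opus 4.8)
The plan is to reduce the statement to two facts about the direct images of $p$: that $R^0p_\ast\sO_{Y'}\cong\sO_Y$, and that $R^qp_\ast\sO_{Y'}=0$ for every $q\geq 1$. Granting these, the canonical map $\sO_Y\to Rp_\ast\sO_{Y'}$ is the isomorphism $\sO_Y\xrightarrow{\ \sim\ }R^0p_\ast\sO_{Y'}$ placed in degree $0$, which is the assertion. The identity $R^0p_\ast\sO_{Y'}\cong\sO_Y$ is not special to blow-ups: $p$ is proper and birational and $Y'$ is integral, so $p_\ast\sO_{Y'}$ is a coherent sheaf of $\sO_Y$-algebras which is a domain, integral over $\sO_Y$, and with the same function field; since $Y$ is normal, this forces $p_\ast\sO_{Y'}=\sO_Y$ (equivalently, $p$ has trivial Stein factorization). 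This is the one step where normality of $Y$ --- rather than just smoothness of the embedded $Z$ --- is genuinely used.

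For the vanishing of the higher direct images I would work locally on $Y$, so that $Z$ has constant codimension $c$ and, because $Z\hookrightarrow Y$ is a regular immersion, the exceptional divisor $E\subset Y'$ is the projectivized normal bundle $\P(N_{Z/Y})$, i.e.\ a $\P^{c-1}$-bundle $\pi\colon E\to Z$; write $i\colon Z\hookrightarrow Y$ and $\iota\colon E\hookrightarrow Y'$ for the closed immersions, so $p\circ\iota=i\circ\pi$. Two standard inputs do the work. First, $\sO_{Y'}(-E)$ is the twisting sheaf $\sO(1)$ of the presentation $Y'=\mathrm{Proj}_Y\big(\bigoplus_{n\geq 0}\sI_Z^n\big)$, whose Rees algebra is generated in degree one; hence $\sO_{Y'}(-E)$ is $p$-ample, and relative Serre vanishing gives an integer $m_0$ with $R^qp_\ast\sO_{Y'}(-mE)=0$ for all $q\geq 1$ and all $m\geq m_0$. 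Second, $\sO_{Y'}(-E)|_E$ is the relative $\sO_E(1)$ of the projective bundle $\pi$, so $R^q\pi_\ast\big(\sO_{Y'}(-jE)|_E\big)=0$ for $q\geq 1$ and $j\geq 0$ (the cohomology of $\sO(j)$, $j\geq 0$, on $\P^{c-1}$ vanishes in positive degrees); as $i$ is affine this yields $R^qp_\ast\big(\iota_\ast(\sO_{Y'}(-jE)|_E)\big)=i_\ast R^q\pi_\ast(\cdots)=0$ for $q\geq 1$, $j\geq 0$.

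Then I would descend from $m_0$ to $0$. For each $m\geq 1$, twisting the structure sequence $0\to\sO_{Y'}(-E)\to\sO_{Y'}\to\sO_E\to 0$ by $\sO_{Y'}(-(m-1)E)$ gives
\[
0\to \sO_{Y'}(-mE)\to \sO_{Y'}(-(m-1)E)\to \iota_\ast\big(\sO_{Y'}(-(m-1)E)|_E\big)\to 0.
\]
Applying $Rp_\ast$ and invoking the second input with $j=m-1\geq 0$, the long exact sequence shows $R^qp_\ast\sO_{Y'}(-mE)\to R^qp_\ast\sO_{Y'}(-(m-1)E)$ is surjective for every $q\geq 1$. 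Composing these surjections for $m=m_0,m_0-1,\dots,1$, and using that the source vanishes at $m=m_0$, we get $R^qp_\ast\sO_{Y'}=0$ for all $q\geq 1$; together with the first paragraph this proves the lemma.

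I do not expect a serious obstacle, since the proof is an assembly of standard facts about blow-ups and relative cohomology. The points needing care are the identifications of $\sO_{Y'}(-E)$ with the relatively ample $\sO(1)$ and of $\sO_{Y'}(-E)|_E$ with $\sO_E(1)$ --- both coming from the Proj description of the blow-up together with regularity of the immersion $Z\hookrightarrow Y$ --- and the use of normality of $Y$ to identify $p_\ast\sO_{Y'}$, which is precisely the hypothesis that would fail for a non-normal $Y$.
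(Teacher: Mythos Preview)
Your proof is correct and complete; the two steps---normality for $p_\ast\sO_{Y'}=\sO_Y$, and the descending induction via relative Serre vanishing together with the projective-bundle cohomology on $E$---are assembled carefully, and the identifications $\sO_{Y'}(-E)\cong\sO(1)$ and $\sO_{Y'}(-E)|_E\cong\sO_E(1)$ are exactly what makes the induction run.

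The paper argues differently. It first localises \'etale-locally to the explicit model $Y=\mathbb{A}^n$, $Z=\{x_1=\cdots=x_r=0\}$, so that $Y'\subset Y\times\P^{r-1}$ is cut out by $x_iy_j=x_jy_i$. It then uses Zariski's Main Theorem for $p_\ast\sO_{Y'}=\sO_Y$, and for the higher vanishing it changes the projection: instead of pushing forward along $p$, it observes that the other projection $Y'\to\P^{r-1}$ exhibits $Y'$ as (the total space of) a vector bundle over $\P^{r-1}$, so $H^i(Y',\sO_{Y'})=H^i(\P^{r-1},\mathrm{Sym}\,\sO(1)\otimes\cdots)=0$ for $i>0$; affineness of $Y$ then converts this into $R^ip_\ast\sO_{Y'}=0$. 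Your approach is more intrinsic and avoids any coordinate reduction, at the cost of invoking relative Serre vanishing and running an induction; the paper's approach is shorter once the local model is in hand, but relies on the somewhat ad hoc trick of projecting to $\P^{r-1}$ rather than to $Y$.
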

\begin{proof}
	Since the statement is \'etale locally, we may assume that $Y=\spec(k[x_1,\dots,x_n])$ and $Z$ is defined by the ideal $(x_1,\dots,x_r)$. Then $Y'$ is isomorphic to the subvariety of $Y\times \mathbb{P}^{r-1}$ defined by
	$$x_iy_j=x_jy_i,\quad i,j=1,\dots,r$$
	under the coordinates $(x_1,\dots,x_n,[y_1,\cdots,y_r])$.
	
	Zariski's Main theorem \cite[Corollary 11.4]{Hartshorne} gives us
	$$\sO_Y\simeq p_\ast\sO_{Y'}.$$
	Therefore it suffices to show that
	$$H^i(Y',\sO_{Y'})=0,\quad i>0.$$
	Recall that the composition morphism
	$$Y'\subset Y\times \mathbb{P}^{r-1}\to \mathbb{P}^{r-1}$$
	makes $Y'$ be a line bundle over $\mathbb{P}^{r-1}$, which by local calculation is isomorphic to $\sO(1)$. Therefore
	$$H^i(Y',\sO_{Y'})\simeq H^i(\mathbb{P}^{r-1},{\rm Sym}\sO(1))=0,\quad i>0$$
	and we proved the lemma.
\end{proof}
\begin{proof}[Proof of Proposition \ref{prop_KEY}]
We first remark that for a semistable reduction $Z/S$, we have a natural isomorphism
$$\Omega_{{\bf Z_0}/{\bf k}}\simeq\Omega_{Z/S}(\log Z_0)|_{Z_0},$$
where $\Omega_{Z/S}(\log Z_0)$ is the relative log cotangent sheaf attached to the log morphism $(Z,Z_0)\to(S,0=\spec(k))$.

Let $D\nsubseteq Z_0$ be a strict normal crossing divisor in $Z$, such that $D\cup Z_0$ is normal crossing and has normal crossings with $Y_0$ (Definition \ref{defn_NC_subscheme}). Denote $\tilde{D}$ for the strict transform on $X$. We will show, by induction on $\dim Z$ and the number of components of $D$, that the canonical morphism
\begin{align}\label{align_quasiiso}
\Omega^i_{Z/S}({\rm log} (Z_0+D))|_{Z_0}\to R\pi_\ast\left(\Omega^i_{X/S}({\rm log} (X_0+\tilde{D}))|_{X_0}\right)
\end{align}
is a quasi-isomorphism. That is, 
\begin{align}\label{align_quasiiso1}
\Omega^i_{Z/S}({\rm log} (Z_0+D))|_{Z_0}\stackrel{\simeq}{\to}\pi_\ast\left(\Omega^i_{X/S}({\rm log} (X_0+\tilde{D}))|_{X_0}\right)
\end{align}
and
\begin{align}\label{align_quasiiso2}
R^j\pi_\ast\left(\Omega^i_{X/S}({\rm log} (X_0+\tilde{D}))|_{X_0}\right)=0,\quad j>0.
\end{align}
Notice that when $D=\emptyset$, these isomorphisms are nothing but the statements of Proposition \ref{prop_KEY}.

Since these isomorphisms can be checked \'etale locally, we assume, in the rest of the proof, that $Z$ is affine and is a strictly semistable reduction over $R$. The proof is divided into two cases.

{\bf Case I: $i=0$.} It suffices to show
\begin{align}\label{align_XZ}
\sO_{Z_0}\simeq R\pi_\ast\sO_{X_0}.
\end{align}
Denote the irreducible decomposition by $Z_0=\cup_{i=1}^r Z_i$, then $X_0=\bigcup_{i=1}^{r+1} X_i$.
The restriction morphism $\pi_i:X_i\to Z_i$ is the blowing up along $Y_0\cap Z_i$. $X_{r+1}=\mathbb{P}(N_{Y_0/Z})$ and the restriction morphism $\pi_{r+1}:\mathbb{P}(N_{Y_0/Z})\to Y_0$ is the canonical projection.

For convenience, denote $Z_{r+1}=Y_0$. Given $I\subset\{1,\dots,r+1\}$, denote 
$$Z_I=\bigcap_{i\in I}Z_i.$$ 
The same notation applies to $X_0$. If $r+1\notin I$, the restriction morphism $\pi_I:X_I\to Z_I$ is the blowing up morphism along the smooth center $Z_I\cap Y_0$. If $r+1\in I$, then $\pi_I:X_I\to Z_I$ is a projective bundle.

We have the exact sequences
\begin{align}\label{align_X_0}
0\to\sO_{X_0}\to\bigoplus_{|I|=1}\sO_{X_I}\to\bigoplus_{|I|=2}\sO_{X_I}\to\cdots,
\end{align}
and
\begin{align}\label{align_Z_0}
0\to\sO_{Z_0}\to\bigoplus_{|I|=1}\sO_{Z_I}\to\bigoplus_{|I|=2}\sO_{X_I}\to\cdots.
\end{align}
Applying $R\pi_\ast$ to (\ref{align_X_0}) we obtain the spectral sequence
$$E_1^{ij}=\bigoplus_{|I|=i}R^j\pi_{I\ast}\sO_{X_I}\Rightarrow R^{i+j}\pi_\ast\sO_{X_0}.$$
By Lemma \ref{lem_blow-up} we have
$$E_1^{ij}\simeq\begin{cases}
\bigoplus_{\stackrel{}{|I|=i}}\sO_{Z_I}, & j=0 \\
0, & j\neq 0
\end{cases}.$$
Combining with the exactness of (\ref{align_Z_0}), we obtain that (\ref{align_XZ}) is an quasi-isomorphism.
 
{\bf Case II: $i>0$.}
For convenience, we denote the claimed quasi-isomorphism (\ref{align_quasiiso}) by $\mathbf{C}_{n,r}$, where $n=\dim Z$ and $r=l(D)$ is the number of the components of $D$. Certainly the assertion $\mathbf{C}_{n,r}$ depends on the geometric setting $(Z/W,Y_0)$. We abuse this notation when $(Z/W,Y_0)$ is clear in the context.

Denote $l_{Y_0}(Z_0)$ for the number of components of $Z_0$ that contain $Y_0$. Since $D\cup Z_0$ is strict normal crossing, we have
\begin{align}\label{align_r}
0\leq r\leq\dim Z-l_{Y_0}(Z_0)-\dim Y_0
\end{align}
in the assertion $\mathbf{C}_{n,r}$. When $r$ reach its maximum, i.e. $r=\dim Z-l_{Y_0}(Z_0)-\dim Y_0$, the defining functions of $D$ and of the components of $Z_0$ that contain $Y_0$ form a local frame of the conormal bundle $\sI_{Y_0,Z}/\sI_{Y_0,Z}^2$. 


The assertion $\mathbf{C}_{1,0}$ is trivial. The remaining proof is divided into two inductive steps:
 
\textbf{Step 1:} In this step we prove $\mathbf{C}_{n,r(n)}$ for each $n>0$. Here 
$$r(n)=\dim Z-l_{Y_0}(Z_0)-\dim Y_0$$ 
is the maximum of $r$ in (\ref{align_r}) under the geometric setting $(Z/W,Y_0)$. 


We are going to show that the canonical morphism
\begin{align}\label{align_step1_iso}
\pi^\ast\Omega_{Z/S}({\rm log}(Z_0+D))\to\Omega_{X/S}({\rm log}(X_0+\tilde{D}))
\end{align}
is an isomorphism.  As a consequence, for each $i$ the projection formula reads
$$R\pi_\ast\left(\Omega^i_{X/S}({\rm log}(X_0+\tilde{D}))|_{X_0}\right)\simeq R\pi_\ast\pi^\ast\left(\Omega^i_{Z/S}({\rm log}(Z_0+D))|_{Z_0}\right)\simeq\Omega^i_{Z/S}({\rm log}(Z_0+D))|_{Z_0}\otimes R\pi_\ast\sO_{X_0}.$$
By case 1, $R\pi_\ast\sO_{X_0}\simeq\sO_{Z_0}$. This finishes the proof of step 1.

It remains to prove the isomorphism (\ref{align_step1_iso}). Since (\ref{align_step1_iso}) is generically an isomorphism between locally free sheaves, it is injective. Without loss of generality, we assume that 
\begin{enumerate}
	\item $Z=\spec(A)$ for a Noetherian regular local ring $A$.
	\item $Y_0$ is defined by the ideal $(x_1,\dots,x_m)$ where $x_1,\dots,x_m\in A$ are the defining sections of the components of $Z_0\cup D$ that contains $Y_0$.
	\item $X=\spec\left(A[y_1,\dots,y_{m-1}]/(x_my_1-x_1,\dots,x_m y_{m-1}-x_{m-1})\right)$.
\end{enumerate}
Denote ${\bf X}=(X,X_0\cup \tilde{D})$ and ${\bf Z}=(Z,Z_0\cup D)$. Then the cokernel of (\ref{align_step1_iso}) is given by
\begin{align}\nonumber
\Omega_{{\bf X}/{\bf Z}}&\simeq\bigoplus_{i=1}^{m-1}\sO_X\frac{dy_i}{y_i}\oplus\sO_X\frac{dx_m}{x_m}/\bigoplus_{i=1}^{m-1}\sO_X(\pi^\ast(\frac{dx_i}{x_i}))\oplus\sO_X\frac{dx_m}{x_m}\\\nonumber
&\simeq\bigoplus_{i=1}^{m-1}\sO_X\frac{dy_i}{y_i}\oplus\sO_X\frac{dx_m}{x_m}/\bigoplus_{i=1}^{m-1}\sO_X(\frac{dy_i}{y_i}+\frac{dx_m}{x_m})\oplus\sO_X\frac{dx_m}{x_m}=0.\nonumber
\end{align}
Hence (\ref{align_step1_iso}) is an isomorphism.

\textbf{Step 2:}
Assume that $r<\dim Z-l_{Y_0}(Z_0)-\dim Y_0$. By Lemma \ref{lem_induction_ssreduction} there is a regular divisor $D'\nsubseteq Z_0$ on $Z$ such that $D'\cup D\cup  Z_0$ is strict normal crossing. Denote by $\tilde{D'}$ the strict transform on $X$ and by $\tilde{D'}_0$ its central fiber. Then $D'$ and $\tilde{D'}$ are semistable reductions over $S$. By Lemma \ref{lem_residue_sequence} we have the following exact sequence

\begin{align}\label{align_residue2}
0\to\Omega^i_{X/S}({\rm log} (X_0+\tilde{D}))|_{X_0}\to\Omega^i_{X/S}({\rm log} (X_0+\tilde{D}+\tilde{D}'))|_{X_0}\stackrel{{\rm Res}_{\tilde{D'}}|_{X_0}}{\to}\Omega^{i-1}_{\tilde{D'}/S}({\rm log} (X_0+\tilde{D})|_{\tilde{D'}})|_{\tilde{D'}_0}\to 0.
\end{align}
If $\mathbf{C}_{\dim Z,l(D)+1}$ and $\mathbf{C}_{\dim Z-1,\ast}$ hold true, we have the exact sequence
\begin{align}\label{align_induction}
0&\to\pi_\ast\left(\Omega^i_{X/S}({\rm log}(X_0+\tilde{D}) )|_{X_0}\right)\to\pi_\ast\left(\Omega^i_{X/S}({\rm log} (X_0+\tilde{D}+\tilde{D'}))|_{X_0}\right)\\
&\stackrel{\pi_\ast({\rm Res}_{\tilde{D'}}|_{X_0})}{\to}\pi_\ast\left(\Omega^{i-1}_{\tilde{D'}/S}({\rm log} ((X_0+\tilde{D})|_{\tilde{D'}}))|_{\tilde{D'}_0}\right)\to R^1\pi_\ast\left(\Omega^i_{X/S}({\rm log} (X_0+\tilde{D}))|_{X_0}\right)\to 0,\nonumber
\end{align}

and 
$$R^{j+1}\pi_\ast\left(\Omega^i_{X/S}({\rm log}(X_0+\tilde{D}))|_{X_0}\right)\simeq
R^j\pi_\ast\left(\Omega^{i-1}_{\tilde{D}/S}({\rm log} ((X_0+\tilde{D})|_{\tilde{D'}}))|_{\tilde{D'}_0}\right)=0 ,\quad j>0.
$$
By $\mathbf{C}_{\dim Z,l(D)+1}$ and $\mathbf{C}_{\dim Z-1,\ast}$, we have
$$
\pi_\ast\left(\Omega^i_{X/S}({\rm log} (X_0+\tilde{D}+\tilde{D'}))|_{X_0}\right)\simeq
\Omega^{i}_{Z/S}({\rm log} (Z_0+D+D'))|_{Z_0},
$$
and
$$
\pi_\ast\left(\Omega^{i-1}_{\tilde{D'}/S}({\rm log} ((X_0+\tilde{D})|_{\tilde{D'}}))|_{\tilde{D'}_0}\right)\simeq
\Omega^{i-1}_{D'/S}({\rm log} (Z_0+D))|_{D'_0}.
$$
Therefore (\ref{align_induction}) is isomorphic to 
\begin{align*}
0&\to\pi_\ast\left(\Omega^i_{X/S}({\rm log} (X_0+\tilde{D}))|_{X_0}\right)\to\Omega^i_{Z/S}({\rm log} (Z_0+D+D'))|_{Z_0}\\
&\stackrel{{\rm Res}_{D'}|_{D'_0}}{\to}\Omega^{i-1}_{D'/S}({\rm log} ((Z_0+D)|_{D'}))|_{D'_0}\to R^1\pi_\ast\left(\Omega^i_{X/S}({\rm log}(X_0+\tilde{D}))|_{X_0}\right)\to 0.\nonumber
\end{align*}
As a consequence, 
$$\pi_\ast\left(\Omega^i_{X/S}({\rm log} (X_0+\tilde{D}))|_{X_0}\right)\simeq \Omega^i_{Z/S}({\rm log} (Z_0+D))|_{Z_0}$$
and
$$R^1\pi_\ast\left(\Omega^i_{X/S}({\rm log} (X_0+\tilde{D}))|_{X_0}\right)=0.$$
In summary, we obtain that 
$$\mathbf{C}_{n,r+1} \textrm{ and } \mathbf{C}_{n-1,r} \textrm{ imply }\mathbf{C}_{n,r}.$$
Together with step 1, the proposition is proved.
\end{proof}
To make a step further,  we make the following conjecture.
\begin{conj}\label{conj}
	Let $X$ and $Y$ be two semistable families over $S$ with an isomorphism $\alpha:X_{\eta}\simeq Y_{\eta}$ of generic fibers. Then, there is a sequence of rational maps
	$$\varphi:X=V_0\stackrel{\varphi_1}{\dashrightarrow}V_1\stackrel{\varphi_2}{\dashrightarrow}V_2\stackrel{\varphi_3}{\dashrightarrow}\cdots\stackrel{\varphi_n}{\dashrightarrow}V_n=Y,$$
	such that
	\begin{enumerate}
		\item $\varphi|_{\eta}=\alpha$,
		\item For each $i$, $V_i$ is a semi-stable reduction over $S$, 
		\item For each $i$, either $\varphi_{i}$ or $\varphi^{-1}_{i}$ is an admissible blow-up.
	\end{enumerate}
\end{conj}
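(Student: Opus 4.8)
The plan is to reformulate the statement combinatorially via toroidal geometry and then to establish the resulting combinatorial assertion by a weak‑factorization‑type argument that respects the semistability constraint. Working étale‑locally on the total spaces (legitimate, since all the notions involved are étale‑local), I would attach to a semistable family $Z/S$, together with an auxiliary strict normal crossing divisor $D$ transverse to $Z_0$, the toroidal embedding $(Z,Z_0+D)$, its conical polyhedral complex $\Delta$, and the integral piecewise‑linear ``height'' $h=\mathrm{ord}_\pi$ (with $\pi$ a uniformizer of $R$), which on the cone $\sigma_I$ of a stratum is the linear form summing the coordinates indexed by those $i\in I$ with $E_i\subset Z_0$. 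Semistability of $Z$ is then equivalent to the \emph{admissibility} of $\Delta$: every cone is unimodular, and $h$ takes the value $1$ on each primitive generator coming from a component of $Z_0$ and the value $0$ on each coming from $D$. By Definition~\ref{defn_admissible_blow-up} the centre $Y_0$ of an admissible blow‑up is, locally, cut out by exactly one component of $Z_0$ together with transverse regular data — which we absorb into $D$ — so it is a stratum whose cone $\tau$ contains exactly one ray of height $1$; the blow‑up is the star subdivision of $\Delta$ along $\tau$, and the new primitive generator (the sum of the generators of $\tau$) again has height $1$, so the blown‑up family is again semistable. Conversely each such star subdivision comes from an admissible blow‑up. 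Two semistable models $X,Y$ with $X_\eta\simeq Y_\eta$ now give two admissible subdivisions of one and the same support $(|\Delta|,h)$, and Conjecture~\ref{conj} becomes the purely combinatorial assertion: \emph{any two admissible subdivisions of $(|\Delta|,h)$ are joined by a chain of admissible star subdivisions and their inverses.}

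\emph{Attacking the combinatorial statement.} The first thing to try is the classical strategy: pass to a common refinement $\Delta_X\wedge\Delta_Y$, resolve it to a common admissible subdivision $\Delta_W$, and then cut the two proper birational toroidal morphisms $\Delta_W\to\Delta_X$ and $\Delta_W\to\Delta_Y$ into elementary pieces by a Morelli--W{\l}odarczyk weak‑factorization argument, checking that every intermediate complex stays admissible (so the corresponding scheme is semistable and the step is an admissible blow‑up). For relative dimension $\le 1$ this succeeds: the graph of semistable models is connected by blowing up and down smooth points of components, which recovers the theory of minimal models of arithmetic surfaces. One would then try to bootstrap to higher dimension by induction on $\dim Z$ and on the combinatorial complexity of the pair $(\Delta_X,\Delta_Y)$, using the residue exact sequences and the projective‑bundle structure of the exceptional divisors — exactly the devices already exploited in the proof of Proposition~\ref{prop_KEY} — to cut everything down along strata to the lower‑dimensional situation.

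\emph{The main obstacle.} The serious difficulty — and the reason the statement is only a conjecture — is that a common refinement is in general \emph{not} admissible and admits no admissible refinement of the same support. Along a deepest stratum where components $E_{i_1},\dots,E_{i_k}$ of $Z_0$ meet, the cone $\langle e_{i_1},\dots,e_{i_k}\rangle$ carries $k\ge 2$ rays of height $1$, and a ray of $\Delta_Y$ through its interior is typically primitive of shape $a_1 e_{i_1}+\dots+a_k e_{i_k}$ with $h=a_1+\dots+a_k\ge 2$; no admissible blow‑up can introduce such a ray, and over a \emph{fixed} base $S$ one cannot clear the obstruction by a ramified base change $R\subset R'$, which is precisely the device by which the heights are ``divided'' in the semistable reduction theorems of KKMS and Abramovich--Karu. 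Hence the factorization must be produced while staying inside the rigid category of admissible subdivisions from start to finish, and I expect that this requires a genuinely new combinatorial input beyond the known toric weak‑factorization machinery. Finally, in positive or mixed residue characteristic there is the additional — and, to my knowledge, equally open — difficulty that a common semistable model need not exist at all, resolution of singularities being unavailable; under the hypothesis of functorial embedded resolution over $\mathbb{Z}$ (as used in Corollary~\ref{cor_degeneration_E1}) this last point disappears, but the semistability‑preserving factorization difficulty does not.
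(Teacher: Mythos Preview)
The statement you are attempting to prove is labelled \textbf{Conjecture} in the paper, and the paper does \emph{not} give a proof. The only thing the paper says about it is the one sentence immediately following the statement: the conjecture would follow from functorial embedded resolution of singularities over $\mathbb{Z}$, via the weak factorization theorem of Abramovich--Temkin \cite[Theorem~1.3.3~(2)]{Abr2016}, and in particular it is known when $\mathrm{char}(k)=0$ by \cite[Theorem~0.3.1]{Abr2002}. No argument is supplied beyond these citations; the conjecture is used only conditionally, to deduce Corollary~\ref{cor_degeneration_E1}.

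Your write-up is therefore not really a proof proposal so much as a strategy sketch together with an honest diagnosis of why the problem is open, and on that level it is broadly reasonable. Your toroidal reformulation is in the same spirit as the paper's cited route --- weak factorization is, at bottom, a combinatorial statement about fans --- and you correctly isolate the two genuine obstructions: (i) the standard weak factorization machinery does not guarantee that every intermediate fan stays \emph{admissible} (height~$1$ on all special-fibre rays), and (ii) in mixed or positive characteristic the input of resolution of singularities is missing. Point~(ii) is exactly why the paper assumes functorial resolution over $\mathbb{Z}$; point~(i) is what the log/boundary-respecting version of weak factorization in \cite{Abr2016} is designed to handle (in characteristic~$0$), though the paper does not spell this out.

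One small caution about your combinatorial translation: you assert that the centre $Y_0$ of an admissible blow-up is, locally, cut out by \emph{exactly one} component of $Z_0$ together with transverse data, so that the new ray always has height~$1$. Definition~\ref{defn_NC_subscheme} as written allows $Y_0$ to lie in several components of $Z_0$ (the paper itself tracks the quantity $l_{Y_0}(Z_0)$ in the proof of Proposition~\ref{prop_KEY}); if $l_{Y_0}(Z_0)\ge 2$ the exceptional divisor acquires multiplicity $l_{Y_0}(Z_0)$ in the special fibre and the result is no longer semistable in the sense of Definition~\ref{defn_ssreduction}. Your restriction is thus the right one for the conjecture to make sense on the nose, but it is narrower than what the paper's definitions literally permit, and you should flag this discrepancy rather than silently build it into the combinatorics.
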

The conjecture follows if we assume that the functorial embedded resolution of singularities applies over $\mathbb{Z}$ (\cite[Theorem 1.3.3 (2)]{Abr2016}). In particular, it holds when ${\rm char}(k)=0$ (\cite[Theorem 0.3.1]{Abr2002}). As a consequence
\begin{cor}\label{cor_degeneration_E1}
	Notation as above and assume that the functorial embedded resolution of singularities applies over $\mathbb{Z}$. Then the $E_1$-degeneration property of the special fiber of $X/S$ depends only on the generic fiber $X_{\eta}$. Namely, for another semistable integral model $Y$ over $S$ of  $X_\eta$, $E_1$-degeneration for $\bf X_0/k$ holds if and only if it holds  for $\bf Y_0/k$.
\end{cor}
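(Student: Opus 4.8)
The plan is to derive the statement formally from Theorem~\ref{thm_degeneration_E1} and Conjecture~\ref{conj}, the latter being available under the assumed resolution hypothesis by \cite[Theorem 1.3.3 (2)]{Abr2016} (and unconditionally in equal characteristic zero by \cite[Theorem 0.3.1]{Abr2002}). Let $Y/S$ be a second semistable model of $X_\eta$ and fix an isomorphism $\alpha\colon X_\eta\simeq Y_\eta$ of generic fibers. First I would invoke Conjecture~\ref{conj} to produce a zig-zag
\[
X=V_0\stackrel{\varphi_1}{\dashrightarrow}V_1\stackrel{\varphi_2}{\dashrightarrow}\cdots\stackrel{\varphi_n}{\dashrightarrow}V_n=Y
\]
in which every $V_i$ is a semistable family over $S$, each $\varphi_i^{\pm1}$ is an admissible blow-up, and $\varphi|_\eta=\alpha$. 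For each $i$ this says that one of $V_{i-1}\to V_i$ and $V_i\to V_{i-1}$ is an admissible blow-up, so Theorem~\ref{thm_degeneration_E1} applies to the pair $\{V_{i-1},V_i\}$ and gives that the logarithmic Hodge to de Rham spectral sequence of $(V_{i-1})_0/{\bf k}$ degenerates at $E_1$ precisely when that of $(V_i)_0/{\bf k}$ does. As this equivalence is symmetric in the two members of the pair, the direction of each $\varphi_i$ is irrelevant, and composing the $n$ equivalences by induction on the length of the chain yields that $E_1$-degeneration for $\bf X_0/k$ holds if and only if it holds for $\bf Y_0/k$.

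Two routine points accompany this. All $V_i$ are proper over $S$, since a blow-up along a closed subscheme is a projective surjective morphism and hence properness over $S$ propagates along the chain in both directions, starting from $V_0=X$. And the equivalences supplied by Theorem~\ref{thm_degeneration_E1} involve only the special fibers and the admissible-blow-up relation, so no compatibility among the various $\varphi_i$ is required; a plain induction suffices.

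The genuinely substantive input --- and the only real obstacle --- is Conjecture~\ref{conj} itself: the factorization of the birational correspondence between two semistable $S$-models into admissible blow-ups and blow-downs through semistable models. The expected proof is a relative, toroidal form of weak factorization. One would take the closure $\Gamma$ of the graph of $\alpha$ in $X\times_S Y$, consider it together with the reduced divisor cut out by the two special fibers, and apply functorial embedded resolution over $\mathbb{Z}$ to obtain a common semistable model $W$ dominating both $X$ and $Y$; one then factors $W\to X$ and $W\to Y$ each as a composition of admissible blow-ups. Functoriality over $\mathbb{Z}$ is exactly what keeps each intermediate space semistable over $S$ and forces the successive centers to be regular and to meet the special fiber with normal crossings, i.e.\ admissible. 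In equal characteristic zero this program is carried out in \cite{Abr2002}; over $\mathbb{Z}$ it relies on functorial resolution in mixed characteristic, which is the open hypothesis being assumed. Once Conjecture~\ref{conj} is granted, the corollary follows with no further geometry, by the argument above.
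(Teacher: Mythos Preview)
Your argument is correct and matches the paper's own approach: the paper simply observes that Conjecture~\ref{conj} follows from the assumed functorial embedded resolution over $\mathbb{Z}$ (citing \cite[Theorem 1.3.3 (2)]{Abr2016}, and \cite[Theorem 0.3.1]{Abr2002} in characteristic zero) and then states the corollary as an immediate consequence of Theorem~\ref{thm_degeneration_E1}. Your write-up merely makes the induction along the zig-zag explicit, which the paper leaves implicit.
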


To conclude this section, we shall deduce an important consequence from Proposition \ref{prop_KEY} on DR-decomposability, a topic on which we shall concentrate in the next two sections. Together with Examples constructed in \S4, the following result forms the corresonding picture on DR-decomposability under admissible blow-ups.
\begin{cor}\label{cor_decompose_blow-up}
Notations as above. Assume moreover that ${\rm char}(k)=p>0$. Denote by $F_{X_0}:X_0\to X_0$ and $F_{Z_0}:Z_0\to Z_0$ the Frobenius morphism. If $\tau_{<p}F_{X_0\ast}\Omega_{\bf X_0/k}^\bullet$ is decomposable in $D(X_0)$, then $\tau_{<p}F_{Z_0\ast}\Omega_{\bf Z_0/k}^\bullet$ is decomposable in $D(Z_0)$.
\end{cor}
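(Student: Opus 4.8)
The plan is to apply the derived pushforward $R\pi_\ast$ to the assumed decomposition of $\tau_{<p}F_{X_0\ast}\Omega^\bullet_{{\bf X_0}/{\bf k}}$ and to recognise each of the two resulting objects, the essential inputs being Proposition \ref{prop_KEY} and the logarithmic Cartier isomorphism. Since $k$ is perfect, Proposition \ref{prop_KEY} holds over $k$ by flat base change from the algebraically closed case treated above, and the logarithmic Cartier isomorphism is available; I will use the latter in the form: for a log smooth ${\bf Y}/{\bf k}$ one has $\mathcal H^i(F_{Y\ast}\Omega^\bullet_{{\bf Y}/{\bf k}})\cong\Omega^i_{{\bf Y'}/{\bf k}}$ for every $i\ge 0$, a locally free sheaf, so that ${\bf Y}$ is DR-decomposable precisely when $\tau_{<p}F_{Y\ast}\Omega^\bullet_{{\bf Y}/{\bf k}}\simeq\bigoplus_{i<p}\Omega^i_{{\bf Y'}/{\bf k}}[-i]$ in $D(Y)$. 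Both ${\bf X_0}/{\bf k}$ and ${\bf Z_0}/{\bf k}$ are log smooth (the special fibre of a semistable family is log smooth over ${\bf k}$, and an admissible blow-up of a semistable family is semistable), and the Frobenius twist $\pi'\colon{\bf X_0'}\to{\bf Z_0'}$ of $\pi$ is again an admissible blow-up of semistable families over the twisted base, so Proposition \ref{prop_KEY} applies to $\pi'$ as well. I shall also use repeatedly that the absolute Frobenii $F_{X_0},F_{Z_0}$ are homeomorphisms, so that $F_{X_0\ast},F_{Z_0\ast}$ are exact (hence commute with $\tau_{<p}$) and faithful, and that $\pi\circ F_{X_0}=F_{Z_0}\circ\pi$.

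The main step will be the identification
\[
R\pi_\ast(\tau_{<p}F_{X_0\ast}\Omega^\bullet_{{\bf X_0}/{\bf k}})\ \simeq\ \tau_{<p}F_{Z_0\ast}\Omega^\bullet_{{\bf Z_0}/{\bf k}}\qquad\text{in }D(Z_0).
\]
First, by the Cartier isomorphism the cohomology sheaves $\mathcal H^i(F_{X_0\ast}\Omega^\bullet_{{\bf X_0}/{\bf k}})$ are locally free (being, up to the Frobenius twist of the base, the sheaves $\Omega^i_{{\bf X_0}/{\bf k}}$), so $R^j\pi_\ast\mathcal H^i(F_{X_0\ast}\Omega^\bullet_{{\bf X_0}/{\bf k}})=0$ for all $j>0$ and all $i$, by Proposition \ref{prop_KEY} for $\pi'$ (equivalently, for $\pi$ after flat base change along the twist). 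Because the higher direct images of all its cohomology sheaves vanish, the hypercohomology spectral sequence $E_2^{a,b}=R^a\pi_\ast\mathcal H^b(-)\Rightarrow\mathcal H^{a+b}(R\pi_\ast(-))$ degenerates both for $F_{X_0\ast}\Omega^\bullet_{{\bf X_0}/{\bf k}}$ and for each of its canonical truncations; comparing cohomology sheaves then shows that the natural morphism $R\pi_\ast\tau_{<p}F_{X_0\ast}\Omega^\bullet_{{\bf X_0}/{\bf k}}\to\tau_{<p}R\pi_\ast F_{X_0\ast}\Omega^\bullet_{{\bf X_0}/{\bf k}}$ is an isomorphism (on $\mathcal H^n$ both sides equal $\pi_\ast\mathcal H^n(F_{X_0\ast}\Omega^\bullet_{{\bf X_0}/{\bf k}})$ for $n<p$ and vanish for $n\ge p$, and the morphism is the identity there), i.e. $R\pi_\ast$ commutes with $\tau_{<p}$ on this complex. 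On the other hand, since $\pi\circ F_{X_0}=F_{Z_0}\circ\pi$ and $F_{X_0\ast},F_{Z_0\ast}$ are exact, $R\pi_\ast F_{X_0\ast}\Omega^\bullet_{{\bf X_0}/{\bf k}}=F_{Z_0\ast}R\pi_\ast\Omega^\bullet_{{\bf X_0}/{\bf k}}$, and the case $l=0$ of the two consequences of Proposition \ref{prop_KEY} recorded at the start of the proof of Theorem \ref{thm_degeneration_E1} identifies this with $F_{Z_0\ast}\Omega^\bullet_{{\bf Z_0}/{\bf k}}$. Applying $\tau_{<p}$ and using $\tau_{<p}F_{Z_0\ast}\Omega^\bullet_{{\bf Z_0}/{\bf k}}=F_{Z_0\ast}\tau_{<p}\Omega^\bullet_{{\bf Z_0}/{\bf k}}$ then gives the displayed identification.

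Granting this, the corollary follows formally. If ${\bf X_0}$ is DR-decomposable, then $\tau_{<p}F_{X_0\ast}\Omega^\bullet_{{\bf X_0}/{\bf k}}\simeq\bigoplus_{i<p}\Omega^i_{{\bf X_0'}/{\bf k}}[-i]$ in $D(X_0)$; applying $R\pi_\ast$, the left-hand side becomes $\tau_{<p}F_{Z_0\ast}\Omega^\bullet_{{\bf Z_0}/{\bf k}}$ by the identification just proved, while the right-hand side becomes $\bigoplus_{i<p}\Omega^i_{{\bf Z_0'}/{\bf k}}[-i]$ by Proposition \ref{prop_KEY} (applied to the Frobenius twist $\pi'$ of $\pi$). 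Hence $\tau_{<p}F_{Z_0\ast}\Omega^\bullet_{{\bf Z_0}/{\bf k}}\simeq\bigoplus_{i<p}\Omega^i_{{\bf Z_0'}/{\bf k}}[-i]$, which by the Cartier isomorphism for ${\bf Z_0}$ is exactly the DR-decomposability of ${\bf Z_0}$. I expect the one point that demands more than formal bookkeeping to be the vanishing $R^j\pi_\ast\mathcal H^i(F_{X_0\ast}\Omega^\bullet_{{\bf X_0}/{\bf k}})=0$ for $j>0$: this is where the Cartier isomorphism is invoked to replace the cohomology sheaves of the log de Rham complex by honest log differential forms, to which Proposition \ref{prop_KEY} applies, and it is exactly this vanishing that allows $R\pi_\ast$ to be pushed through the truncation $\tau_{<p}$. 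Everything else --- the identity $\pi\circ F_{X_0}=F_{Z_0}\circ\pi$ and the bookkeeping with Frobenius twists, harmless because $k$ is perfect --- should be routine.
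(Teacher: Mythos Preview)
Your argument is correct and follows essentially the same route as the paper: apply $R\pi_\ast$ to the assumed decomposition, use Proposition~\ref{prop_KEY} to identify the summands on the right with $\Omega^i_{{\bf Z_0'}/{\bf k}}$, and use the log Cartier isomorphism together with Proposition~\ref{prop_KEY} to show that $R\pi_\ast(\tau_{<p}F_{X_0\ast}\Omega^\bullet_{{\bf X_0}/{\bf k}})\simeq\tau_{<p}F_{Z_0\ast}\Omega^\bullet_{{\bf Z_0}/{\bf k}}$. The only organisational difference is that the paper checks this last identification by computing the cohomology sheaves of the right-hand side via the hypercohomology spectral sequence and invoking Cartier for ${\bf Z_0}$, whereas you first commute $R\pi_\ast$ past $\tau_{<p}$ and then use $R\pi_\ast F_{X_0\ast}=F_{Z_0\ast}R\pi_\ast$ together with $\Omega^\bullet_{{\bf Z_0}/{\bf k}}\simeq R\pi_\ast\Omega^\bullet_{{\bf X_0}/{\bf k}}$; these are two arrangements of the same computation, and your bookkeeping with the Frobenius twist $\pi'$ is in fact slightly more careful than the paper's.
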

\begin{proof}
By the assumption we have an isomorphism
\begin{align}\label{align_X_0_decompose}
\tau_{<p}F_{X_0\ast}\Omega_{\bf X_0/k}^\bullet\simeq\bigoplus_{i=0}^{p-1}\Omega_{\bf X_0/k}^i[-i]
\end{align}
in $D(X_0)$. Applying $R\pi_\ast$ on (\ref{align_X_0_decompose}) we get
\begin{align}\label{align_Z_0_decompose}
R\pi_\ast\tau_{<p}F_{X_0\ast}\Omega_{\bf X_0/k}^\bullet\simeq\bigoplus_{i=0}^{p-1}R\pi_\ast\Omega_{\bf X_0/k}^i[-i].
\end{align}
By Proposition \ref{prop_KEY}, the right hand side of (\ref{align_Z_0_decompose}) is canonically isomorphic to 
$$\bigoplus_{i=0}^{p-1}\Omega_{\bf Z_0/k}^i[-i].$$
Therefore it remains to show that the canonical morphism
\begin{align}\label{align_Z_0_decompose1}
\tau_{<p}F_{Z_0\ast}\Omega_{\bf Z_0/k}^\bullet\to R\pi_\ast\tau_{<p}F_{X_0\ast}\Omega_{\bf X_0/k}^\bullet
\end{align}
is an isomorphism in $D(Z_0)$. By \cite[Theorem 4.12]{KKato1988} the spectral sequence associated to the canonical truncation of $\tau_{<p}F_{X_0\ast}\Omega^\bullet_{{\bf X_0}/{\bf k}}$ is
$$E_1^{ij}=\begin{cases}
R^j\pi_\ast\Omega^i_{{\bf X_0}/{\bf k}}, & i<p \\
0, & i\geq p
\end{cases}\Longrightarrow R^{i+j}\pi_\ast \tau_{< p}F_{X_0\ast}\Omega^\bullet_{{\bf X_0}/{\bf k}}.$$
By Proposition \ref{prop_KEY} we obtain that
$$H^i(R\pi_\ast \tau_{<p}F_{\ast}\Omega^\bullet_{{\bf X_0}/{\bf k}})\simeq\begin{cases}
\Omega^i_{{\bf Z_0}/{\bf k}}, & i<p \\
0, & i\geq p
\end{cases}.$$
Again by \cite[Theorem 4.12]{KKato1988}, (\ref{align_Z_0_decompose1}) is a quasi-isomorphism and we complete the proof.
\end{proof}

\section{A criterion for DR-decomposability}\label{section_log}
The decomposition theorem of Deligne-Illusie has a log analogue, that is the Kato's decomposition theorem (see  \cite[Theorem 4.12]{KKato1988}). Kato's theorem is crucial to the construction of our examples in the next section. Indeed, our construction is based on a simple modification (Theorem \ref{thm_decom_lifting}) of Kato's theorem. In order to understand it properly, let us start with recalling the basic notion in log geometry, namely \emph{log structure}, as introduced in \cite{KKato1988}.
\begin{defn}
Let $X$ be a scheme (equipped with the \'etale topology). A pre-log structure on $X$ is a homomorphism $\alpha: M\to \sO_X$ of sheaves of monoids, where $\sO_X$ is regarded as a sheaf of monoids with respect to the ring multiplication. A pre-log structure $(M,\alpha)$ is called a log structure if $\alpha$ induces an isomorphism
$$
\alpha^{-1}(\sO_X^*)\cong \sO_X^*,
$$
where $\sO_X^*\subset \sO_X$ is the subsheaf of invertible elements. A log scheme is a triple $(X,M,\alpha)$, where $X$ is a scheme equipped with a log structure $(M,\alpha)$ on $X$. When the meaning of $\alpha$ is clear, we shall omit it from the definition of a log scheme.
\end{defn}
To a pre-log structure $(M,\alpha)$, one associates canonically a log structure $M^a$ \cite[\S 1.3]{KKato1988}. For any morphism $P\to \Gamma(X,\sO_X)$ of monoids, we attach the pre-log structure (and hence the associated log structure): $P_X\to \sO_X$, where $P_X$ denotes for the constant sheaf on $X$ corresponding to $P$ \cite[\S 1.5 (3)]{KKato1988}. This type of log structure is important for us to elucidate different log structures on base schemes in our setting.  In our context, we shall \emph{only} apply $P=\N$, the additive monoid of nonnegative integers. Thus, in order to specify a morphism of the above type, it suffices to indicate an element of $\Gamma(X,\sO_X)$ which is the image of $1\in \N$. When there is no danger of confusion, a log scheme of the form $(X,1\mapsto a)$, for some $a\in \Gamma(X,\sO_X)$, shall mean a log structure on $X$ arising from this way.
\begin{exmp}
Let $k$ be an arbitrary field. The log scheme $(\spec(k), 1\mapsto 0)$ is called the \emph{standard log point} that is denoted by $\bf k$ throughout the paper. For $k$ a perfect field of characteristic $p>0$ and $n$ a natural number, $(\Spec(W_n(k)) ,1\mapsto p^m), 0\leq m\leq n$ are mutually non-isomorphic log schemes. For $m=0$, the log structure is trivial  \cite[\S1.5 (2)]{KKato1988}. For $n= m=1$, we get the standard log point $\bf k$. In the following, the log scheme ${\bf W_2}$ which is defined to be the case $n=2,m=2$ is especially important. Note both $\bf W_2$ and $(\spec(W_2), 1\mapsto p)$ are the first-order thickenings of $\bf k$. 
\end{exmp}
Let $R$ be a DVR with the residue field $k$ and $X$ be a semistable reduction over $R$.  Let $M_{X_0}$ be the log structure on $X$ attached to $X_0$ \cite[\S1.5 (1)]{KKato1988}. Then, the structural morphism $X\to \spec(R)$ extends to a \emph{log smooth} morphism of log schemes $f: (X,M_{X_0})\to (\spec(R),1\mapsto \pi)$ \cite[\S3]{KKato1988}, where $\pi$ is a uniformizer of $R$.  Let $f_0:{\bf X_0}:=(X_0,M_{X_0}|_{X_0})\to {\bf k}$ be the base change of $f$ via the exact closed immersion of log schemes ${\bf k}\hookrightarrow (\spec(R), 1\mapsto \pi)$. The most important property of $f_0$ is that it is of Cartier type (\cite[Definition 4.8]{KKato1988}). Thus Kato's decomposition theorem (\cite[Theorem 4.12]{KKato1988}) applies to $f_0$.

Now we restrict ourselve to the setting of Problem \ref{prob_Illusie}, that is $R=W(k)$ with $k$ a perfect field of positive characteristic $p$. Notice that we are already in the position to apply Kato's decomposition theorem to $f_0$ defined as above. If we take the lifting $(\Spec \ W_2(k), 1\mapsto p)$ of the base log scheme $\bf k$, it seems that we could conclude the decomposition property of $\tau_{<p}F_{*}\omega^{\bullet}_{{\bf X_0}/{\bf k}}$ by applying  \cite[Theorem 4.12 (2)]{KKato1988}.  However, this is not quite true: by a simple calculation, one finds the absolute Frobenius $F_{{\bf k}}: {\bf k}\to {\bf k}$ of the standard log point is \emph{non-liftable} to $(\Spec \ W_2(k), 1\mapsto p)$. Therefore, to the obvious lifting ${\bf X_1}$ of ${\bf X_0}$ over $(\Spec\ W_2, 1\mapsto p)$, which is given by
$$
{\bf X_1}:=(X,M_{X_0})\times_{(\Spec\ W, 1\mapsto p)}(\Spec\ W_2, 1\mapsto p),
$$
one cannot perform the log analogue of the base change as we do in the scheme case. As a consequence, the lifting condition for ${\bf X'_0}:={\bf X_0}\times_{{\bf k},F_{\bf k}}{\bf k}$, as required by the theorem, might not be satisfied. Our examples in the next section shall demonstrate that it is indeed the case.

The departure from trying to show the truth of Problem \ref{prob_Illusie} begins with the following simple observation. Recall that a log smooth variety ${\bf X}/{\bf k}$ is DR-decomposable if $\tau_{<p}F_{*}\omega^{\bullet}_{{\bf X}/{\bf k}}$ is decomposable in the derived category.

\begin{thm}\label{thm_decom_lifting}
Let $f: {\bf X}\to \bf k$ be a log smooth morphism of Cartier type. Then ${\bf X}$ is DR-decomposable if and only if ${\bf X}$ admits a log smooth lifting to ${\bf W}_2$.
\end{thm}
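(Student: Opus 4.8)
The plan is to adapt the classical Deligne--Illusie argument to the logarithmic setting, following Kato's treatment in \cite[\S4]{KKato1988}, but being careful about \emph{which} lifting data is actually required. The key point is to separate the base change issue (noted above for $F_{\bf k}$) from the liftability of ${\bf X}$ itself. Recall that the relative Frobenius $F:{\bf X}\to{\bf X}'$ over ${\bf k}$, where ${\bf X}'={\bf X}\times_{{\bf k},F_{\bf k}}{\bf k}$, is a morphism of $k$-schemes (on underlying schemes $X'\cong X$ since $k$ is perfect), and $f$ being of Cartier type guarantees the log Cartier isomorphism $C^{-1}:\Omega^i_{{\bf X}'/{\bf k}}\xrightarrow{\sim}\sH^i(F_*\Omega^\bullet_{{\bf X}/{\bf k}})$. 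So one always has the canonical truncation filtration with known graded pieces; DR-decomposability is exactly the statement that this filtration splits after applying $\tau_{<p}$.

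First I would prove the ``if'' direction. Suppose ${\bf X}$ admits a log smooth lifting ${\bf \tilde X}\to{\bf W_2}$. The obvious lifting ${\bf X_1}$ of ${\bf X}$ over $(\Spec W_2, 1\mapsto p)$ is \emph{not} what we want (as explained, $F_{\bf k}$ does not lift there), but a log smooth lifting over ${\bf W_2}$ is precisely a lifting over the ``correct'' thickening in Kato's sense. Concretely, I would reduce to the affine/local situation, choose a log smooth lifting $\tilde F:{\bf \tilde X}\to{\bf \tilde X}'$ of the relative Frobenius \'etale-locally (possible by log smoothness of the deformation and the fact that $F$ is a closed immersion into an \'etale-locally split situation — here one uses that on charts $F$ is given by $p$-th power on the monoid and on functions, which lifts), and then run the standard computation: the two local lifts of $F$ differ by a derivation, producing a \v{C}ech $1$-cocycle with values in $\mathcal{H}om(F^*\Omega^1_{{\bf X}'/{\bf k}},\sO_X)$ whose class is the obstruction, and the existence of the global lifting ${\bf \tilde X}$ kills this obstruction and yields the splitting map $\bigoplus_{i<p}\Omega^i_{{\bf X}'/{\bf k}}[-i]\to\tau_{<p}F_*\Omega^\bullet_{{\bf X}/{\bf k}}$. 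This is essentially verbatim Kato's proof of \cite[Theorem 4.12 (1)]{KKato1988}, so I would mostly cite it, emphasizing only that the relevant lifting is over ${\bf W_2}$ and not over $(\Spec W_2, 1\mapsto p)$.

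Next, the ``only if'' direction, which I expect to be the real content. Assume $\tau_{<p}F_*\Omega^\bullet_{{\bf X}/{\bf k}}$ is decomposable in $D(X')$. I would extract from the degree-$1$ part of the splitting a morphism $\Omega^1_{{\bf X}'/{\bf k}}[-1]\to\tau_{<p}F_*\Omega^\bullet_{{\bf X}/{\bf k}}$ splitting off the lowest truncation step, i.e.\ a class in $\mathrm{Ext}^1$ that trivializes the obstruction cocycle above; by the standard dictionary between such splittings and deformations (the log deformation theory of \cite[\S3.14]{KKato1988}, which identifies the obstruction to lifting ${\bf X}$ to ${\bf W_2}$ with a class in $H^2(X,\mathcal{H}om(\Omega^1_{{\bf X}/{\bf k}},\sO_X))$ and the torsor of liftings with $H^1$ of the same sheaf), the vanishing of this obstruction is equivalent to the existence of a log smooth lifting of ${\bf X}$ to ${\bf W_2}$. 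The one subtlety to check is that the obstruction class governing the splitting and the obstruction class governing the lifting genuinely coincide (up to sign/unit), which in the non-log case is the heart of Deligne--Illusie; in the log case this is Kato's \cite[Theorem 4.12 (2)]{KKato1988} once one works over the base ${\bf W_2}$. So the main obstacle, and the place where I would be most careful, is verifying that Kato's equivalence applies with the base thickening ${\bf k}\hookrightarrow{\bf W_2}$ in the role of ``$\tilde S$'' — i.e.\ that $F_{\bf k}$ \emph{does} lift to an endomorphism of ${\bf W_2}$ (it does: on the chart $\N\to W_2(k)$, $1\mapsto p$ would be the bad lift, whereas ${\bf W_2}$ corresponds to $1\mapsto p$ with the \emph{log} structure $n=m=2$, and there $F_{\bf k}$ lifts compatibly) — after which both directions are an application of the logarithmic Deligne--Illusie machinery already available in \cite{KKato1988}.
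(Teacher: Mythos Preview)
Your proposal contains a genuine gap: you conflate the liftability of ${\bf X}$ with that of ${\bf X}'$. Kato's Theorem 4.12 \cite{KKato1988}, once you verify that $F_{\bf k}$ lifts to an endomorphism of ${\bf W}_2=(\Spec W_2(k),1\mapsto 0)$, yields the equivalence
\[
\tau_{<p}F_*\Omega^\bullet_{{\bf X}/{\bf k}}\text{ decomposable} \iff {\bf X}'\text{ admits a log smooth lifting to }{\bf W}_2,
\]
\emph{not} a statement about ${\bf X}$ directly. In your ``only if'' direction you write that the splitting controls ``the obstruction to lifting ${\bf X}$ to ${\bf W}_2$,'' but what it actually controls is the obstruction to lifting ${\bf X}'$. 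These are a priori different problems: although the underlying schemes $X$ and $X'$ are isomorphic (since $k$ is perfect), that isomorphism lies over $F_{\bf k}$ rather than the identity, and the lifting $F_{{\bf W}_2}$ of $F_{\bf k}$ is \emph{not} an isomorphism of log schemes. So a ${\bf W}_2$-lifting of ${\bf X}'$ does not immediately yield one of ${\bf X}$.

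This is precisely the point the paper isolates. After invoking Kato, the entire remaining content of the proof is to show that ${\bf X}$ is ${\bf W}_2$-liftable if and only if ${\bf X}'$ is. One direction is easy (base change along $F_{{\bf W}_2}$). For the converse, the paper computes the \v{C}ech $2$-cocycles representing the obstruction classes $\omega(f)\in H^2(X,T_{{\bf X}/{\bf k}})$ and $\omega(f')\in H^2(X',T_{{\bf X}'/{\bf k}})$ and shows that $\omega(f')=\sigma^*\omega(f)$ under the natural $\sigma$-semilinear bijection $H^2(X,T_{{\bf X}/{\bf k}})\to H^2(X',T_{{\bf X}'/{\bf k}})$; hence one vanishes iff the other does. (Alternatively, as noted in the paper, one can exhibit an explicit equivalence of groupoids between liftings of ${\bf X}$ and liftings of ${\bf X}'$ via pullback along $F_{{\bf W}_2}$ and a pushout construction in the other direction.) Your sketch never engages with this step; you would need to add it to close the argument. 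As a minor aside, your description of ${\bf W}_2$ as ``$1\mapsto p$ with $n=m=2$'' is garbled: ${\bf W}_2$ is $(\Spec W_2(k),1\mapsto 0)$, i.e.\ $1\mapsto p^2=0$.
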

\begin{proof}
Let $f': {\bf X'}\to \bf k$ is the base change of $f$ by the absolute Frobenius $F_{\bf k}: \bf k\to \bf k$. Recall that ${\bf W}_2=(\spec(W_2(k)),1\mapsto0)$. There is an obvious lifting $F_{{\bf W}_2}: {\bf W}_2\to {\bf W}_2$ of $F_{\bf k}$ which is given by the following commutative diagram:
$$\xymatrix{
	W_2\ar[r]^{F_{W_2}} & W_2\\
	\mathbb{N} \ar[u]^\alpha \ar[r]^{\times p} & \mathbb{N}\ar[u]_\alpha,\\
}$$
where $F_{W_2}$ is the Frobenius morphism of $W_2$ and $\alpha$ is the pre-log structure determined by $1\mapsto 0$. Applying Kato's decomposition theorem (\cite[Theorem 4.12]{KKato1988}), one obtains the following statement: $\tau_{<p}F_{\ast}\Omega^{\bullet}_{\bf X/k}$ is decomposable in the derived category if and only if ${\bf X'}$ is liftable to ${\bf W}_2$. So it remains to show ${\bf X'}$ is liftable to ${\bf W}_2$ if and only if ${\bf X}$ is liftable to ${\bf W}_2$. One direction is clear: via the base change by $F_{{\bf W}_2}$, one obtains a $\bf{W}_2$-lifting of ${\bf X'}$ from that of ${\bf X}$. The converse direction is less obvious, as $F_{{\bf W}_2}$ is not an isomorphism of log schemes (for $F_{\bf k}$ is not an isomorphism). We prove it as follows: by \cite[Proposition 3.14]{KKato1988} and the proof therein, we have the obstruction class $\omega(f)\in {\rm Ext}^2(\Omega^1_{{\bf X}/{\bf k}}, f^*(p))$ (resp. $\omega(f')\in  {\rm Ext}^2(\Omega^1_{{\bf X'}/{\bf k}}, f'^*(p))$) of log smooth lifting of ${\bf X}$ (resp. ${\bf X'}$) over $\mathbf{W}_2$. We claim that $\omega(f)$ vanishes if and only if $\omega(f')$ does. For that, we take an open affine covering $\{{\bf U_i}\}_{i\in I}$ of ${\bf X}$ (each ${\bf U_i}$ is equipped with the induced log structure). Because of  \cite[Proposition 3.14 (1)]{KKato1988}, we may take for each $i\in I$ a log smooth lifting $\sU_i$ over $\mathbf{W}_2$, and over each overlap ${\bf U_{ij}}:={\bf U_i}\cap {\bf U_j}$ (which is again affine), we may take an isomorphism $\alpha_{ij}:\sU_i|_{{U_{ij}}}\rightarrow \sU_j|_{{U_{ij}}}$ between two liftings of $U_{ij}$ over ${\bf W}_2$. Then $\omega(f)$ is represented by the 2-cocycle $(c_{ijk})$ which over ${\bf U_{ijk}}:={\bf U_i}\cap {\bf U_j}\cap {\bf U_k}$ takes the value
$$
\alpha_{ki}^{-1}\alpha_{jk}\alpha_{ij}-Id\in {\rm Hom}(\Omega^1_{{\bf U_{ijk}}/{\bf k}},f^*(p))=T_{{\bf U_{ijk}}/{\bf k}}.
$$
Here $T_{?/{\bf k}}$ is the dual of $\Omega_{?/{\bf k}}$ for a log scheme $?$ over $\bf k$. The last equality uses the fact that $\Omega^1_{{\bf X}/{\bf k}}$ is locally free by  \cite[Proposition 3.10]{KKato1988}. Now we pull back the datum $\sU_i$s and $\{\alpha_{ij}\}$s over ${\bf W}_2$ by the morphism $F_{{\bf W}_2}$. A moment of thought shall lead us to the conclusion that $\omega(f')$ is represented by the 2-cocycle $(\sigma^*(c_{ijk}))$. In  other words, under the natural map
$$
H^2(X,T_{\bf X/\mathbf{k}})\stackrel{{\sigma^\ast}}{\longrightarrow} H^2(X',\sigma^\ast T_{\bf X/\mathbf{k}})=
H^2(X',T_{\bf X'/\mathbf{k}}),$$
the obstruction class of lifting $f$ is mapped to that of lifting $f'$. As $\sigma^*$ is semi-linear and bijective, $\omega(f)=0$ if and only if $\omega(f')=\sigma^*(\omega(f))=0$. Thus the claim is proved and then the theorem follows.
\end{proof}
\begin{rmk}\label{rmk_vanish_ob}
	By Theorem \ref{thm_decom_lifting} and \cite[Proposition 3.14]{KKato1988}, ${\bf X}$ is DR-decomposable whenever $H^2(X,T_{\bf X/k})=0$. This includes the cases that 
	\begin{enumerate}
		\item $\dim {\bf X}=1$,
		\item ${\bf X}$ is affine,
	\end{enumerate}
\end{rmk}
\begin{rmk}
	After presenting our results, Weizhe Zheng provided us a more conceptual proof of Theorem \ref{thm_decom_lifting}: Let ${\bf X}=(X,M_X)$, ${\bf X'}=(X',M_{X'})$. Denote by $\textrm{Lift}(X,M_X)$ (resp. $\textrm{Lift}(X',M_{X'})$) the groupoid of liftings of $(X,M_X)$ (rsep. $(X',M_{X'})$) over $\mathbf{W}_2$. Let $G:\mathbf{W}_2\rightarrow\mathbf{W}_2$ be a lifting of the log Frobenius morphism $F:\mathbf{k}\rightarrow\mathbf{k}$. Given a lifting $(X^{(1)},M_{X^{(1)}})\in \textrm{Lift}(X)$, the pullback of $(X^{(1)},M_{X^{(1)}})$ along $G$ gives an object in $\textrm{Lift}(X',M_{X'})$. With the obvious assignments on morphisms, one can get a functor
	$$
	A: \textrm{Lift}(X,M_X)\rightarrow\textrm{Lift}(X',M_{X'}).
	$$
	Conversely, let $(X'^{(1)},M_{X'^{(1)}})\in \textrm{Lift}(X',M_{X'})$ be a lifting of $(X',M_{X'})$. Denote by $i:(X',M_{X'})\hookrightarrow (X'^{(1)},M_{X'^{(1)}})$ the canonical strict closed immersion and by $\sigma:(X',M_{X'})\rightarrow (X,M_{X})$ the base change of $F:\mathbf{k}\rightarrow\mathbf{k}$. Recall that $\sigma:X'\rightarrow X$ is an isomorphism. One can construct the pushout $X'^{(1)}\amalg_{X'}X$ of the diagram
	$$\xymatrix{
		X' \ar[r]^{\sigma} \ar[d]^i & X\\
		X'^{(1)} &
	}$$
	as follows:
	\begin{itemize}
		\item The underlying scheme $X'^{(1)}\amalg_{X'}X$ is defined to be $X'^{(1)}$,
		\item the log structure on $X'^{(1)}\amalg_{X'}X$ is defined to be $M_{X'^{(1)}}\times_{M_{X'}}M_X$.
	\end{itemize}
	With the obvious assignments on morphisms, the pushout process along $\sigma:(X',M_{X'})\rightarrow (X,M_{X})$ gives a functor
	$$
	B: \textrm{Lift}(X',M_{X'})\rightarrow\textrm{Lift}(X,M_{X}).
	$$
	It is straightforward to check the following proposition.
	\begin{prop}
		The functor $A$ gives an  equivalence of groupoids, and the functor $B$ is its quasi-inverse.
	\end{prop}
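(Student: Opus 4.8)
The plan is to verify that $B$ really lands in $\mathrm{Lift}(X,M_X)$ and that the composites $B\circ A$ and $A\circ B$ are naturally isomorphic to the identity functors; apart from one genuine point, the log smoothness of the pushout defining $B$, everything is a formal manipulation of the universal properties of fibre products and pushouts. The organising observation is the standard fact that a morphism between two objects of $\mathrm{Lift}(X,M_X)$ over the square-zero thickening $\mathbf{k}\hookrightarrow\mathbf{W}_2$ inducing the identity on the common special fibre $\mathbf{X}$ is automatically an isomorphism: by flatness over $W_2(k)$ the underlying morphism of schemes is an isomorphism, and then the two log structures, restricting to the same one on $\mathbf{X}$, must agree as well. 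Granting this, it suffices to produce natural transformations $\mathrm{id}\Rightarrow B\circ A$ and $\mathrm{id}\Rightarrow A\circ B$ whose components cover the respective identities, for these components are then automatically isomorphisms.

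First I would dispose of well-definedness and functoriality. For $A$ this is immediate: since $G$ lifts $F$ and iterated fibre products are associative, $(\widetilde{\mathbf{X}}\times_{\mathbf{W}_2,G}\mathbf{W}_2)\times_{\mathbf{W}_2}\mathbf{k}=\mathbf{X}\times_{\mathbf{k},F}\mathbf{k}=\mathbf{X}'$, base change preserves log smoothness, and functoriality is the universal property of fibre products. For $B$, the underlying scheme of the pushout $\widetilde{\mathbf{X}'}\amalg_{\mathbf{X}'}\mathbf{X}$ is the scheme $X'^{(1)}$ underlying $\widetilde{\mathbf{X}'}$ (because $\sigma\colon X'\to X$ is an isomorphism, $k$ being perfect), so it is flat over $W_2(k)$, and its log structure $M_{X'^{(1)}}\times_{M_{X'}}M_X$ reduces modulo $p$ to $M_{X'}\times_{M_{X'}}M_X=M_X$. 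The structure morphism of $B(\widetilde{\mathbf{X}'})$ to $\mathbf{W}_2$ is the one induced, on the $\widetilde{\mathbf{X}'}$-leg, by $G$ composed with the structure map of $\widetilde{\mathbf{X}'}$, and, on the $\mathbf{X}$-leg, by the reduction map $\mathbf{X}\to\mathbf{k}\hookrightarrow\mathbf{W}_2$; the two agree over $\mathbf{X}'$ precisely because $G$ lifts $F$ and $\sigma$ is the base change of $F$, and this is what makes the pushout a lifting of $\mathbf{X}$ over $\mathbf{W}_2$. Functoriality of $B$ is the universal property of pushouts.

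Next I would construct the two natural isomorphisms. For $\mathrm{id}\Rightarrow B\circ A$: given $\widetilde{\mathbf{X}}$, the first projection $\mathrm{pr}_1\colon A(\widetilde{\mathbf{X}})=\widetilde{\mathbf{X}}\times_{\mathbf{W}_2,G}\mathbf{W}_2\to\widetilde{\mathbf{X}}$ and the closed immersion $\mathbf{X}\hookrightarrow\widetilde{\mathbf{X}}$ become equal after restriction along $\mathbf{X}'\to A(\widetilde{\mathbf{X}})$ and along $\sigma\colon\mathbf{X}'\to\mathbf{X}$ (both composites equal $\mathbf{X}'\xrightarrow{\sigma}\mathbf{X}\hookrightarrow\widetilde{\mathbf{X}}$), so the pushout defining $B(A(\widetilde{\mathbf{X}}))$ produces a canonical morphism $\psi\colon B(A(\widetilde{\mathbf{X}}))\to\widetilde{\mathbf{X}}$; a short check of structure morphisms, using the description of the $\mathbf{W}_2$-structure on $B$ above, shows $\psi$ is a $\mathbf{W}_2$-morphism covering $\mathrm{id}_{\mathbf{X}}$, hence an isomorphism, natural in $\widetilde{\mathbf{X}}$. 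For $\mathrm{id}\Rightarrow A\circ B$: given $\widetilde{\mathbf{X}'}$, the pushout leg $j\colon\widetilde{\mathbf{X}'}\to B(\widetilde{\mathbf{X}'})$ and the structure map $\widetilde{\mathbf{X}'}\to\mathbf{W}_2$ satisfy, by the definition of the $\mathbf{W}_2$-structure on $B(\widetilde{\mathbf{X}'})$, the compatibility that (structure map of $B(\widetilde{\mathbf{X}'})$)$\,\circ\, j$ equals $G$ composed with (structure map of $\widetilde{\mathbf{X}'}$), hence they factor through $B(\widetilde{\mathbf{X}'})\times_{\mathbf{W}_2,G}\mathbf{W}_2=A(B(\widetilde{\mathbf{X}'}))$ and yield a $\mathbf{W}_2$-morphism $\widetilde{\mathbf{X}'}\to A(B(\widetilde{\mathbf{X}'}))$; reducing modulo $p$ identifies it with the canonical isomorphism $\mathbf{X}'\cong\mathbf{X}\times_{\mathbf{k},F}\mathbf{k}$, so it covers the identity and is an isomorphism, natural in $\widetilde{\mathbf{X}'}$. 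Combining the two natural isomorphisms proves the proposition.

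The step I expect to be the main obstacle is that the pushout $B(\widetilde{\mathbf{X}'})=\widetilde{\mathbf{X}'}\amalg_{\mathbf{X}'}\mathbf{X}$ is log smooth over $\mathbf{W}_2$; all the rest is formal once this is granted. I would establish it étale-locally, working with a chart $\mathbb{N}\to P$ for the log smooth morphism $\widetilde{\mathbf{X}'}\to\mathbf{W}_2$ — over which $\widetilde{\mathbf{X}'}$ is strict étale over the standard log scheme $\mathrm{Spec}(W_2(k)[P])$ — and checking that the pushout is again strict étale over a standard log scheme, the relevant chart being the one attached to $\mathbf{X}$, which differs from that of $\widetilde{\mathbf{X}'}$ by the multiplication-by-$p$ reparametrisation of $\mathbb{N}$. (Alternatively one can appeal to Kato's infinitesimal lifting criterion \cite[Proposition 3.14]{KKato1988}.) Conceptually this is the positive counterpart of the fact, noted above, that $F$ itself does not lift to $(\mathrm{Spec}\,W_2(k),1\mapsto p)$: the twist by $G$ built into the definition of $B$ is precisely what repairs that defect.
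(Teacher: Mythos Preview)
The paper gives no proof of this proposition at all: it simply asserts ``It is straightforward to check the following proposition.'' Your argument via the universal properties of fibre products and pushouts, together with the standard observation that any morphism of liftings over a square-zero thickening is automatically an isomorphism, is exactly the kind of verification the authors have in mind, and it is correct in outline.

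One small point worth tightening: your description of the $\mathbf{W}_2$-structure morphism on $B(\widetilde{\mathbf{X}'})$ as ``$G$ composed with the structure map of $\widetilde{\mathbf{X}'}$'' on the $\widetilde{\mathbf{X}'}$-leg conflates two things. On underlying schemes the structure map $X'^{(1)}\to\Spec W_2$ is just the original one (no Frobenius twist), since $\sigma\colon X'\to X$ is already an isomorphism of schemes; the twist enters only in the \emph{chart}, where the map $\mathbb{N}\to M_{X'^{(1)}}\times_{M_{X'}}M_X$ is determined by the pair consisting of the chart of $\widetilde{\mathbf{X}'}$ and the chart of $\mathbf{X}$, and these agree in $M_{X'}$ precisely because the log part of $F$ (and of $G$) is multiplication by $p$ on $\mathbb{N}$. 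Once this is stated correctly, your compatibility check for the map $\widetilde{\mathbf{X}'}\to A(B(\widetilde{\mathbf{X}'}))$ goes through as written. The log smoothness of $B(\widetilde{\mathbf{X}'})$, which you flag as the main point, can indeed be handled \'etale-locally with charts as you suggest, or more cheaply by noting that once the natural isomorphism $A\circ B\simeq\mathrm{id}$ is in place, $B(\widetilde{\mathbf{X}'})$ is \'etale-locally isomorphic to an object in the image of $B\circ A$, hence to a genuine log smooth lifting.
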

\end{rmk}
In the following, we use Kato's decomposition theorem (or rather the version Theorem \ref{thm_decom_lifting} obtained as above), to prove Propositions \ref{reproof on KN} and Theorem \ref{sufficient condition on illusie's problem}.

Let $X$ a normal crossing (n.c.) variety defined over a field $K$. By  \cite[Theorem 5.4]{FKato1995}, $X$ is $d$-semistable if and only if $X$ admits a log structure of \emph{semistable type}. This fact was first shown by Kawamata-Namikawa \cite[Proposition 1.1]{KN1994} for simple normal crossing varieties. The related notion of log structure of semistable type is the notion of log structure of \emph{embedding type} (whose origin traces back to Steenbrink). Both notions are important to our argument below. However, for sake of brevity, we refer our readers to  \cite[\S4-5]{FKato1995} for precise definitions. Now we assume that $X$ is a $d$-semistable n.c. variety over $K$. Let $\alpha: M_X\to\sO_X$ be a log structure of semistable type on $X$. Let ${\bf K}=(\spec(K), 1\mapsto 0)$ be the standard log point. Then we have the following commutative diagram:

$$
\xymatrix{
	K\ar[r]^{ } &  \sO_X\\\
	\mathbb{N} \ar[u]^{ } \ar[r]^{\triangle} & M_X,\ar[u]_{\alpha }\\
}$$
which defines the morphism $f: \mathbf{X}:=(X,M_X)\to \bf K$ of log schemes (which is log smooth). Here $\alpha$ is etale locally isomorphic to the one associated to a SNCD in the affine space over $K$, and the morphism $\triangle$ on such a local chart is the diagonal map $\N\to \N^{r}$ with $r$ the number of local branches. The existence of a log structure of embedding type on $X$ does not necessarily offer such a commutative diagram. Also, we remark that over $X$ there could be more than one isomorphism classes of log structures of semistabe type. Using the description of  isomorphism classes of log structures in  \cite[Remark 4.7]{FKato1995}, it is easy to see that the sheaf $\Omega^1_{{\bf X}/\bf K}$ of relative log differential forms is independent of a choice of $M_X$. From now on, we denote this sheaf by $\Omega^1_{\bf X/K}$ and the corresponding de Rham complex by $\Omega^{\bullet}_{\bf X/K}$ as in Problem \ref{$E_1$-degeneration problem}. Kawamata-Namikawa \cite{KN1994} adapted the original transcendental method of Steenbrink \cite{Steenbrink1976} to establish the following result. Ours is to follow the char $p$ method of Deligne-Illusie \cite{Del_Ill1987} (see \cite[\S6]{Illusie2002}, especially the proof of Theorem 6.9 loc. cit.).
\begin{prop}\label{reproof on KN}
For a proper $d$-semistable n.c. variety over a field $K$ of characteristic 0, the Hodge to de Rham spectral sequence in Problem \ref{$E_1$-degeneration problem} degenerates at $E_1$. 
 \end{prop}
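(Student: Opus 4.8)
The plan is to follow the Deligne--Illusie strategy of reduction to characteristic $p$, using the lifting criterion for DR-decomposability (Theorem \ref{thm_decom_lifting}) as the characteristic-$p$ input. First I would spread out the data: the $d$-semistable n.c. variety $\mathbf{X}/\mathbf{K}$, being of finite type, is defined over a finitely generated $\Z$-subalgebra $A\subset K$; after inverting finitely many elements of $A$ we obtain a proper log smooth morphism $\mathbf{X}_A\to\mathbf{S}_A=(\Spec A,1\mapsto 0)$ of Cartier type, with $\mathbf{X}/\mathbf{K}$ recovered by base change, and such that the Hodge sheaves $\Omega^i_{\mathbf{X}_A/\mathbf{S}_A}$ are locally free and their formation commutes with base change. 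The classical semicontinuity/constructibility argument of Deligne--Illusie then reduces $E_1$-degeneration over $K$ to $E_1$-degeneration of the logarithmic Hodge--de Rham spectral sequence of the fiber $\mathbf{X}_s$ over a closed point $s\in\Spec A$ of sufficiently large residue characteristic $p$; here one uses that $\dim X<p$ for all but finitely many closed points, so only the range $i+j<p$ of the spectral sequence matters and the truncated de Rham complex suffices.

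Next I would produce the required lifting. Over the closed point $s$, the fiber $\mathbf{X}_s\to\mathbf{k}$ (with $k=\kappa(s)$ a finite, hence perfect, field) is log smooth of Cartier type, so Theorem \ref{thm_decom_lifting} tells us that $\mathbf{X}_s$ is DR-decomposable as soon as $\mathbf{X}_s$ lifts log smoothly to $\mathbf{W}_2=(\Spec W_2(k),1\mapsto 0)$. But such a lifting is essentially free: the log structure of semistable type on $\mathbf{X}_A$ already provides, after possibly shrinking $\Spec A$, a model over $\mathbf{S}_A$, and one can lift $A/pA$ to a flat $W_2$-algebra (e.g. using that $A$ is smooth over $\Z$ after localization, or simply via the $W_2$-algebra structure and local étale coordinates) together with the diagonal chart $\N\to\N^r$ defining the semistable-type log structure; concretely, étale-locally $\mathbf{X}_s$ looks like $(\Spec k[x_1,\dots,x_n]/(x_1\cdots x_r),\ \N\to\sO,\ 1\mapsto x_1\cdots x_r)$ restricted to the zero locus of $x_1\cdots x_r$, which lifts tautologically to $(\Spec W_2(k)[x_1,\dots,x_n]/(x_1\cdots x_r),\ 1\mapsto x_1\cdots x_r)$, and these local liftings are compatible because the ambient relative-dimension-$n$ log scheme is log smooth over $\mathbf{W}_2$. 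Thus $\mathbf{X}_s$ admits a log smooth lifting to $\mathbf{W}_2$, hence is DR-decomposable.

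Having the decomposition $\tau_{<p}F_{*}\Omega^\bullet_{\mathbf{X}_s/\mathbf{k}}\simeq\bigoplus_{i<p}\Omega^i_{\mathbf{X}'_s/\mathbf{k}}[-i]$ in $D(X'_s)$, the standard argument concludes: pushing to $k$ and using that $F$ is affine and finite, $\H^m(X_s,\tau_{<p}F_*\Omega^\bullet_{\mathbf{X}_s/\mathbf{k}})\cong\bigoplus_{i+j=m}H^j(X'_s,\Omega^i_{\mathbf{X}'_s/\mathbf{k}})$, and since for $m<p$ the truncation does not affect hypercohomology, one gets $\dim_k\H^m_{dR}(\mathbf{X}_s/\mathbf{k})=\sum_{i+j=m}\dim_k H^j(X_s,\Omega^i_{\mathbf{X}_s/\mathbf{k}})$ (using $\dim_k H^j(X'_s,\Omega^i_{\mathbf{X}'_s/\mathbf{k}})=\dim_k H^j(X_s,\Omega^i_{\mathbf{X}_s/\mathbf{k}})$ by the semilinear base change along Frobenius of $k$). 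This dimension count is exactly $E_1$-degeneration for $\mathbf{X}_s$ in the relevant range, and propagating back up via the spreading-out in the first step yields $E_1$-degeneration for $\mathbf{X}/\mathbf{K}$.

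The main obstacle I anticipate is the bookkeeping in the spreading-out step: one must arrange simultaneously that $\mathbf{X}_A\to\mathbf{S}_A$ is log smooth of Cartier type, that the log structure is everywhere of semistable type (so that the $\mathbf{W}_2$-lifting argument applies fiberwise and the sheaf $\Omega^i_{\mathbf{X}/\mathbf{K}}$ is the one intended in Problem \ref{$E_1$-degeneration problem}), and that all the coherent sheaves involved are flat over $A$ with base-change-compatible cohomology — and then check that a single closed point with $p>\dim X$ and with all these good properties exists. None of these steps is deep, but assembling them carefully is where the real work lies; everything downstream is the formal Deligne--Illusie mechanism combined with Theorem \ref{thm_decom_lifting}.
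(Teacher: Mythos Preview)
Your proposal is correct and follows essentially the same route as the paper: spread out over a finite-type $\Z$-algebra $A$, arrange smoothness of $\Spec A$ over $\Z$ and base-change compatibility of cohomology, pick a closed point of large residue characteristic, use smoothness to extend it to a $\mathbf{W}_2$-point, base-change the spread-out family to obtain the lifting required by Theorem~\ref{thm_decom_lifting}, and then run the Deligne--Illusie dimension count. The only wrinkle is that your \'etale-local gluing alternative for the $\mathbf{W}_2$-lifting is unnecessary and its ``compatibility because the ambient log scheme is log smooth over $\mathbf{W}_2$'' is circular; the clean mechanism is exactly your parenthetical remark that $A$ is smooth over $\Z$, which lets you lift the closed point $s$ to a morphism $g:\mathbf{W}_2\to\mathbf{S}$ and then simply pull back the whole family $\mathbf{\tilde X}\to\mathbf{S}$ along $g$---this is precisely what the paper does.
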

\begin{proof}
Write the field $K$ as an inductive limit of its sub $\Z$-algebras of finite type. Using  \cite[Lemma 6.1.2]{Illusie2002}, one finds a $\Z$-algebra $A\subset K$ of finite type, a proper normal crossing scheme $\tilde X$ of finite type over $S=\spec(A)$ and a log structure of semistable type 
$$
\tilde \alpha: \widetilde{M}_{\tilde X}\to \sO_{\tilde X}
$$ 
relative to $S$ on $\tilde X$ which yields the following commutative diagram:
$$
\xymatrix{
	A\ar[r]^{ } &  \sO_{\tilde X}\\\
	\mathbb{N} \ar[u]^{ } \ar[r]^{\tilde \triangle} & \widetilde{M}_{\tilde X},\ar[u]_{\tilde \alpha }\\
}$$
such that the associated log morphism $\tilde f: (\tilde X, \widetilde{M}_{\tilde X})\to (S, 1\mapsto 0):={\bf S}$ pulls back to $f$ via the obvious base change ${\bf K}\to {\bf S}$. That is, we `spread out' the log morphism $f$ to obtain an integral model $\tilde f: {\bf \tilde X}=(\tilde X,\widetilde{M}_{\tilde X})\to \bf S$. By  \cite[Proposition 6.3]{Illusie2002}, we may assume $S$ is smooth over $\Z$ by schrinking $S$ if necessary. Using the exact argument as  \cite[Proposition 6.6]{Illusie2002}, schrinking $S$ if necessary, we can assume that the $A$-modules $R^n\tilde f_*\Omega^\bullet_{\bf \tilde X/S}$ and $R^j\tilde f_*\Omega^i_{\bf \tilde X/S}$ are all free of finite rank and satisfies the cohomology base change for any morphism $(S', 1\mapsto 0)\to {\bf S}$. By Proposition 6.4 \cite{Illusie2002}, we may take a closed subscheme $s:\bf k\to {\bf S}$ with $\textrm{char}(k)$ sufficiently large (larger than the relative dimension of $\tilde f$). Since $(S,1\mapsto 0)$ is smooth over $(\spec(\Z), 1\mapsto 0)$, the morphism $s$ extends to a morphism $g: \bf W_2\to \bf S$. Pulling back $\tilde f$ via the base change $s$ and $g$, one obtains the following Cartesian diagrams:
$$
\begin{CD}
\bf Y @> >> \bf Y_1 @>>>\bf \tilde X@< <<\bf X\\
  @V\tilde f_0 VV @V \tilde f_1VV@V\tilde f VV@VVfV\\
{\bf k}@> >>{\bf W_2}@>g>> {\bf S}@<<<\bf K.
\end{CD}
$$
By the remark following  \cite[Definition 4.8]{KKato1988}, $\tilde f_0$ is of Cartier type.  Applying Theorem \ref{thm_decom_lifting} to the morphism $\tilde f_0$, one obtains the fact that $F_*\Omega^\bullet_{\bf Y/k}$ is decomposable in $D(Y')$ and therefore (follow the same arguments as in \cite[Corollary 5.6]{Illusie2002}) the Hodge to de Rham spectral sequence of the filtered complex $\Omega^\bullet_{\bf Y/k}$ degenerates at $E_1$. By the freeness and base change property stated as above, the Hodge to de Rham spectral sequence of the filtered complex $\Omega^\bullet_{\bf X/K}$ also degenerates at $E_1$.
\end{proof}
Now let $K$ be a $p$-adic local field and $V$ be its ring of integers. The most powerful tool to obtain the $E_1$-degeneration in Problem \ref{$E_1$-degeneration problem} for a semistable family over $V$ is to show the DR-decomposability in Problem \ref{prob_Illusie} (and the dimension is larger than the characteristic of the residue field $k$). Lang's example \cite{Lang1995} (smooth schemes over ramified $V$) and our example in \S4  (semistable families over unramified $V$) motivate us to find a common criterion for a general semistable reduction over $V$. To this purpose, for the special fiber $\bf X_0\to k$ we propose a stronger lifting condition than the original one in Deligne-Illusie's decomposition theorem.  

Let $K_0\subset K$ be the maximal unramified subfield with the ring of integers $V_0$ and $f(t)$ the Eisenstein polynomial satisfying $f(\pi)=0$. Consider $f$ as an element in $P=V_0[[t]]$. Equip $\spec(P)$ with the log structure given by $\N\to P, 1\mapsto t$. Note that ${\bf V}=(\spec(V),1\mapsto \pi)$ is an exact closed subscheme of ${\bf P}=(\spec(P), 1\mapsto t)$ defined by the ideal $(f)$. 
\begin{prop}\label{sufficient condition on illusie's problem}
Let $X$ be a semistable reduction over $V$, endowed with the standard log structure $M_{X_0}$. If the log scheme $(X,M_{X_0})$ over $\bf V$ is liftable to the log scheme $\bf P$, then the log scheme $\bf X_0$ is DR-decomposable (that is Problem \ref{prob_Illusie} holds true for $\bf X_0\to k$).
\end{prop}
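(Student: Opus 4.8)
The plan is to reduce DR-decomposability of $\bf X_0$ to the existence of a log smooth lifting of $\bf X_0$ over $\bf W_2$, via the criterion Theorem \ref{thm_decom_lifting}, and then produce that $\bf W_2$-lifting from the hypothesized lifting over $\bf P = (\spec(V_0[[t]]), 1\mapsto t)$. First I would spell out the relevant log-scheme geometry of the base: the morphism $\spec(W_2(k)) \to \spec(V_0)$ (via $V_0 = W(k)$ and reduction mod $p^2$) together with $t\mapsto 0$ gives a strict closed immersion ${\bf W_2}\hookrightarrow {\bf P}$; on the other hand the original structural map factors ${\bf V}\hookrightarrow {\bf P}$ by $t\mapsto \pi$, and $f_0:{\bf X_0}\to{\bf k}$ is obtained from the log smooth $X/V$ by restriction to the closed point. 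The key point is that ${\bf k}$ sits inside ${\bf P}$ as the closed subscheme cut out by the ideal $(p,t)$ with the log structure $1\mapsto t$, and that $\bf W_2$ is a square-zero thickening of $\bf k$ sitting inside $\bf P$; concretely $\spec(W_2(k))$ is the quotient of $V_0[[t]]$ by $(p^2, pt, t^2)$ (or an appropriate square-zero ideal) with $t\mapsto 0$.

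Granting a lift $\widetilde{\bf X}$ of $(X,M_{X_0})/{\bf V}$ to $\bf P$ — i.e.\ a log smooth $\bf P$-scheme $\widetilde{\bf X}$ restricting to $(X,M_{X_0})$ over ${\bf V}\hookrightarrow{\bf P}$ — I would obtain the desired $\bf W_2$-lifting of $\bf X_0$ by base change along ${\bf W_2}\hookrightarrow{\bf P}$. The only subtlety is compatibility of the two closed immersions of ${\bf k}$ into ${\bf P}$ coming from ${\bf V}$ and from ${\bf W_2}$: one must check that restricting $\widetilde{\bf X}$ first to ${\bf W_2}$ and then to ${\bf k}$ recovers $\bf X_0$ with its log structure. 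Both routes land on the closed point with the log structure $M_{X_0}|_{X_0}$ pulled back along $1\mapsto 0$, since both ${\bf V}$ and ${\bf W_2}$ induce on ${\bf k}$ the standard log point (the uniformizer $\pi$ and $p$ and $t$ all map to $0$ in $k$), so the identifications agree. Hence $\widetilde{\bf X}\times_{\bf P}{\bf W_2}$ is a log smooth lifting of $\bf X_0$ over $\bf W_2$, and Theorem \ref{thm_decom_lifting} yields that $\tau_{<p}F_\ast\Omega^\bullet_{\bf X_0/k}$ is decomposable in $D(X_0')$, i.e.\ $\bf X_0$ is DR-decomposable.

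The step I expect to be the main obstacle is verifying that ${\bf W_2}$ genuinely embeds as an (exact) log subscheme of ${\bf P}$ in a way compatible with the standard log point — in particular pinning down the right square-zero ideal of $V_0[[t]]$ whose quotient is $W_2(k)$ and checking that $t\mapsto 0$ together with the induced log structure is exactly $\bf W_2 = (\spec W_2(k), 1\mapsto 0)$, not the non-isomorphic $(\spec W_2(k), 1\mapsto p)$. This is where the hypothesis ``liftable to $\bf P$'' does real work, because $\bf P$ has its log structure given by $t$ rather than $p$, precisely circumventing the obstruction noted earlier in the text (that $F_{\bf k}$ does not lift to $(\spec W_2(k), 1\mapsto p)$). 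Once the base-scheme bookkeeping is settled, the rest is a formal base-change argument plus an appeal to Theorem \ref{thm_decom_lifting}; the dimension hypothesis is not needed here, and indeed the last sentence of the statement follows by combining DR-decomposability with Kato's decomposition theorem exactly as in the discussion preceding Problem \ref{prob_Illusie}.
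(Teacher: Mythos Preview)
Your proposal is correct and matches the paper's argument: pull back the hypothesized ${\bf P}$-lifting along an exact closed immersion ${\bf W_2}\hookrightarrow{\bf P}$, then invoke Theorem \ref{thm_decom_lifting}. The paper dispatches your ``main obstacle'' in one line, asserting that ${\bf W_2}$ is the exact closed log subscheme of ${\bf P}$ cut out by $(p,t)^2$; the ideal yielding underlying ring $W_2(k)$ with induced log structure $1\mapsto 0$ is actually $(p^2,t)$ (so your hedge was warranted), but either route produces a log smooth ${\bf W_2}$-lifting of ${\bf X_0}$ and the conclusion follows exactly as you describe.
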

\begin{proof}
Notice that $\bf W_2$ is the exact closed subscheme of $\bf P$ defined by the ideal $(p,t)^2$. A lifting of $(X,M_X)$ over $\bf P$ pulls back to a $\bf W_2$-lifting of the special fiber $\bf X_0$.  Thus Theorem \ref{thm_decom_lifting} implies the result.
\end{proof}
The last result of this section enables us to take $k$ to be algebraically closed in the study of Problem \ref{prob_Illusie}, which we shall assume in the next section. 
\begin{cor}\label{cor_change_field}
Let $f: \mathbf{X}\rightarrow \mathbf{k}$ be a smooth morphism of Cartier type. Let $k'$ be a field extension of $k$ which is also perfect.  Let $f_{k'}: \mathbf{X_{k'}}
\to {\bf k'}$ be the base change of $f$ by ${\bf k'}\to {\bf k}$. Then $\tau_{<p}F_{X/k\ast}\Omega^{\bullet}_{\mathbf{X}/\mathbf{k}}$ is decomposable if and only if $\tau_{<p}F_{X_{k'}/k'\ast}\Omega^{\bullet}_{\mathbf{X_{k'}}/{\mathbf{k'}}}$ is decomposable.
\end{cor}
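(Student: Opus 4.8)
The plan is to reduce the statement, via the lifting criterion of Theorem~\ref{thm_decom_lifting}, to the claim that $\mathbf{X}$ admits a log smooth lifting over $\mathbf{W}_2(k)$ if and only if $\mathbf{X_{k'}}$ admits one over $\mathbf{W}_2(k')$, and then to settle the nontrivial direction by following the obstruction class through the faithfully flat base change $k\hookrightarrow k'$. Two preliminaries make the reduction legitimate. First, being of Cartier type is stable under base change of the base log scheme (the remark following \cite[Definition~4.8]{KKato1988}), so $f_{k'}$ is again log smooth of Cartier type and Theorem~\ref{thm_decom_lifting} applies to it. Second, functoriality of the Witt vectors applied to $k\hookrightarrow k'$ gives a ring map $W_2(k)\to W_2(k')$, hence a morphism of log schemes $g:\mathbf{W}_2(k')\to\mathbf{W}_2(k)$; as both log structures are attached to $1\mapsto 0$, $g$ is strict, lifts $\mathbf{k'}\to\mathbf{k}$, and pulls the exact closed immersion $\mathbf{k}\hookrightarrow\mathbf{W}_2(k)$ back to $\mathbf{k'}\hookrightarrow\mathbf{W}_2(k')$ (on rings, $W_2(k')\otimes_{W_2(k)}k=k'$). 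Given these, the implication ``$\mathbf{X}$ liftable over $\mathbf{W}_2(k)$ $\Rightarrow$ $\mathbf{X_{k'}}$ liftable over $\mathbf{W}_2(k')$'' is immediate: base change a lifting $\widetilde{\mathbf{X}}\to\mathbf{W}_2(k)$ along $g$; the result is log smooth over $\mathbf{W}_2(k')$ and its reduction modulo $p$ is $\mathbf{X}\times_{\mathbf{k}}\mathbf{k'}=\mathbf{X_{k'}}$. By Theorem~\ref{thm_decom_lifting} this handles the direction ``$\mathbf{X}$ DR-decomposable $\Rightarrow$ $\mathbf{X_{k'}}$ DR-decomposable''.

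For the converse I would run the obstruction argument exactly as in the proof of Theorem~\ref{thm_decom_lifting}. By \cite[Proposition~3.14]{KKato1988} the obstruction to lifting $f$ over $\mathbf{W}_2(k)$ is a class $\omega(f)\in{\rm Ext}^2(\Omega^1_{\mathbf{X}/\mathbf{k}},f^*(p))$, which, since $\Omega^1_{\mathbf{X}/\mathbf{k}}$ is locally free (\cite[Proposition~3.10]{KKato1988}) and $f^*(p)\cong\sO_X$, is just $H^2(X,T_{\mathbf{X}/\mathbf{k}})$; likewise $\omega(f_{k'})\in H^2(X_{k'},T_{\mathbf{X_{k'}}/\mathbf{k'}})$. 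Choosing an affine open cover $\{\mathbf{U}_i\}$ of $\mathbf{X}$, local liftings $\sU_i$ over $\mathbf{W}_2(k)$ and gluing isomorphisms $\alpha_{ij}$ over the affine overlaps, $\omega(f)$ is represented by the cocycle $c_{ijk}=\alpha_{ki}^{-1}\alpha_{jk}\alpha_{ij}-\mathrm{Id}\in T_{\mathbf{U}_{ijk}/\mathbf{k}}$; pulling the $\sU_i$ and the $\alpha_{ij}$ back along $g$ exhibits $\omega(f_{k'})$ as represented by the pulled-back cocycle. Under the flat base change isomorphism $H^2(X_{k'},T_{\mathbf{X_{k'}}/\mathbf{k'}})\cong H^2(X,T_{\mathbf{X}/\mathbf{k}})\otimes_k k'$ --- valid since $T_{\mathbf{X_{k'}}/\mathbf{k'}}$ is the pullback of $T_{\mathbf{X}/\mathbf{k}}$ along the flat projection $X_{k'}\to X$ and $X/k$ is quasi-compact and quasi-separated --- this reads $\omega(f_{k'})=\omega(f)\otimes 1$. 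As $k\hookrightarrow k'$ is faithfully flat, $-\otimes_k k'$ is injective, so $\omega(f_{k'})=0$ forces $\omega(f)=0$, i.e.\ $\mathbf{X}$ lifts over $\mathbf{W}_2(k)$. Combined with Theorem~\ref{thm_decom_lifting}, the corollary follows.

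The step I expect to be the main obstacle is the identification $\omega(f_{k'})=\omega(f)\otimes 1$: one must check that the cocycle description of the obstruction in \cite[Proposition~3.14]{KKato1988} is genuinely compatible with pulling local liftings and gluing data back along $g:\mathbf{W}_2(k')\to\mathbf{W}_2(k)$, and that the resulting class matches the image of $\omega(f)$ under the base-change map on $H^2$. This is bookkeeping of the same kind already carried out in the proof of Theorem~\ref{thm_decom_lifting}, where the relevant morphism was the Frobenius lift $F_{\mathbf{W}_2}$ in place of $g$, so no new idea is required; a minor companion point to dispatch is that $\Omega^1$, $T$ and $f^*(p)$ all commute with the base change $k\hookrightarrow k'$, which holds because $f$ is log smooth and $g$ is strict.
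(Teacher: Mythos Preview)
Your proposal is correct and follows essentially the same route as the paper: reduce to the lifting question via Theorem~\ref{thm_decom_lifting}, then compare the obstruction classes $\omega(f)$ and $\omega(f_{k'})$ in $H^2$ of the log tangent sheaf using \v{C}ech cocycles and flat base change along $k\hookrightarrow k'$, exactly parallel to the Frobenius-pullback argument in Theorem~\ref{thm_decom_lifting}. The only cosmetic difference is that the paper treats both directions at once by showing $\pi^*(\omega(f))=\omega(f_{k'})$ under the injective base-change map, whereas you dispatch the forward direction directly by pulling back a lifting along $g$.
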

\begin{proof}
By Theorem \ref{thm_decom_lifting}, it is equivalent to show that $\mathbf{X}$ lifts over ${\bf W}_2(k)$ if and only if $\mathbf{X_{k'}}$ lifts over ${\bf W}_2(k')$. One has the following commutative diagram of morphisms of schemes:
$$
\xymatrix{
X_{k'} \ar[r]^{\pi} \ar[d] & X\ar[d]\\
 \spec \ k'\ar[r]& \spec\ k.
}$$
By the flat base change theorem, we know that the composite of the natural maps
$$
\pi^*: H^2(X,T_{\mathbf{X}/\mathbf{k}})\stackrel{}{\longrightarrow}H^2(X,T_{\mathbf{X}/\mathbf{k}})\otimes_kk'\longrightarrow H^2(X_{k'}, T_{\mathbf{X_{k'}}/\mathbf{k'}})
$$
is injective. The same argument using \v{C}ech representatives of the obstruction classes as given in Theorem \ref{thm_decom_lifting} shows that
$$
\pi^*(\omega(f))=\omega(f_{k'}).
$$
Thus $\omega(f)=0$ if and only if $\omega(f_{k'})=0$. The corollary follows.
\end{proof}

\section{Examples}\label{section-example}
In this section we take $k$ to be an algebraically closed field of characteristic $p>0$.  The aim of this section is to construct semistable families over $W=W(k)$ whose special fibers are DR-indecomposable.  Because of the criterion Theorem \ref{thm_decom_lifting}, it is equivalent to construct \emph{semistable families over $W$ whose special fibers are non-liftable over ${\bf W}_2$}. Such examples negate Problem \ref{prob_Illusie}, and consequently the base changes of their special fibers by the absolute Frobenius $F_{\bf k}$ are non-liftable over $(\Spec\ W_2, 1\mapsto p)$.
\subsection{A technical lemma}
Let $X$ be a strictly semistable reduction over $W$. Let ${\bf X_0}$ be the associated log scheme over ${\bf k}$. In the following second lemma, we show that, in constrast to the smoothing effect on the underlying scheme of the given lifting ${\bf X_1}$ of ${\bf X_0}$ over the log  base $(\Spec\ W_2, 1\mapsto p)$, the underying scheme of a lifting of ${\bf X_0}$ over the log base ${\bf W}_2=(\Spec\ W_2, 1\mapsto 0)$ keeps the singularity of  $X_0$.
\begin{lem}\label{lem_w2lifting}
Notation as above. Let $X_0=\bigcup_{i\in I}X_0^i$ be the irreducible decomposition of the scheme $X_0$. Suppose ${\bf X_1}$ be a ${\bf W}_2$-lifting of ${\bf X_1}$. Then the underlying scheme $X_1$ of ${\bf X_1}$ is the schematic union of closed subschemes $X_1=\bigcup_{i\in I} X^i_1$ such that, for each nonempty set $J\subseteq I$ of indices, the schematic intersection $\bigcap_{j\in J}X^j_1$ is a $W_2$-lifting of the scheme $\bigcap_{j\in J}X_0^j$.
\end{lem}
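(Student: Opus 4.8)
The plan is to pass to the standard étale-local model of a strictly semistable family, where the subschemes $X_1^i$ are visibly present, and then to glue the local pieces by characterizing each $X_1^i$ intrinsically, as the \emph{unique} closed subscheme of $X_1$ that is flat over $W_2$ with special fibre $X_0^i$.

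First I would use the definition of strict semistability (Definition \ref{defn_ssreduction}): étale-locally on $X$ the pair $(X,X_0)$ is isomorphic to $\bigl(\spec W[x_1,\dots,x_n]/(x_1\cdots x_r-p),\,V(p)\bigr)$, with $M_{X_0}$ generated over $\sO^\ast$ by $x_1,\dots,x_r$, the $r$ branches belonging to pairwise distinct components among the $X_0^i$, and with ${\bf X_0}\to{\bf k}$ the log morphism attached to the diagonal $\N\hookrightarrow\N^r$. Running the same construction over ${\bf W}_2=(\spec W_2,1\mapsto0)$ produces a log smooth lifting $X_1^{\mathrm{std}}=\spec W_2[x_1,\dots,x_n]/(x_1\cdots x_r)$ with chart $e_i\mapsto x_i$; by the local existence and uniqueness of log smooth liftings along exact square-zero thickenings (\cite[Proposition 3.14]{KKato1988}), the given lifting ${\bf X_1}$ is, étale-locally on $X$, isomorphic to $X_1^{\mathrm{std}}$. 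In this model I set $X_1^{i}:=V(x_i)$ for each component $X_0^i$ meeting the chart (when $r=1$ the chart generator maps to $0$, so this is $X_1$). A direct computation then records, locally: $X_1^i\cong\spec W_2[x_j:j\neq i]$ is free over $W_2$ with special fibre $X_0^i$; $\bigcap_{j\in J}X_1^j=\spec W_2[x_l:l\notin J]$ is free over $W_2$ with special fibre $\bigcap_{j\in J}X_0^j$ for any subset $J$ of these indices; and $\bigcap_{i}\sI_{X_1^i}=(0)$ in $\sO_{X_1}$ (in the polynomial ring $\bigcap_{i=1}^r(x_i)=(x_1\cdots x_r)$, which becomes zero in $\sO_{X_1}$), so that $\bigcup_i X_1^i=X_1$.

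To glue the local $X_1^i$ into a global closed subscheme I would prove that, étale-locally, $X_1^i$ is the \emph{only} closed subscheme of $X_1$ flat over $W_2$ with special fibre $X_0^i$; uniqueness then forces the local definitions to agree on overlaps and yields a well-defined global closed subscheme $X_1^i\subseteq X_1$ for each $i\in I$. This uniqueness is deformation-theoretic: since $\spec k\hookrightarrow\spec W_2$ is a square-zero thickening with ideal $pW_2\cong k$, the set of closed subschemes of $X_1$ flat over $W_2$ with special fibre $X_0^i$ is a torsor — nonempty, as it contains $X_1^i$ — under $\mathrm{Hom}_{\sO_{X_0}}(\sI_{X_0^i/X_0},\sO_{X_0^i})$, and this group vanishes: in the local model $\sI_{X_0^i/X_0}$ is the ideal $(x_i)\subseteq k[x_1,\dots,x_n]/(x_1\cdots x_r)$, which is cyclic as an $\sO_{X_0}$-module with annihilator $(\prod_{j\neq i}x_j)$, so that any homomorphism to the integral domain $\sO_{X_0^i}=k[x_j:j\neq i]$ must send $x_i$ to an element killed by the nonzero element $\prod_{j\neq i}x_j$, hence to $0$; for $r=1$ the ideal $\sI_{X_0^i/X_0}$ is already zero. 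Finally, the three assertions of the lemma — flatness of $X_1^i$ and of each $\bigcap_{j\in J}X_1^j$ over $W_2$, the identification of their special fibres (using that scheme-theoretic intersection commutes with the base change $-\times_{W_2}k$), and $\bigcup_{i\in I}X_1^i=X_1$ — are étale-local on $X_1$ and have been checked in the model, hence hold globally. The one non-formal ingredient, and the main obstacle, is this gluing step: it rests entirely on the vanishing of the above $\mathrm{Hom}$ group, i.e. on the rigidity of the local flat lift.
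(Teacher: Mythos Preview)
Your proof is correct, and it takes a genuinely different route from the paper's.

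The paper writes down the ideal sheaves $\sI_i\subset\sO_{X_1}$ directly by a global formula and then verifies the three required properties --- flatness of each $\sO_{X_1}/\sI_i$, the vanishing $\bigcap_i\sI_i=0$, and flatness of each $\sO_{X_1}/\sum_{j\in J}\sI_j$ --- by passing to formal completions at closed points, where the log-smooth lift acquires the standard shape $W_2[[x_1,\dots,x_n]]/(x_1\cdots x_r)$ via Kato's structure theorem and the claims become transparent. No gluing argument is needed because the ideals are given globally from the outset.

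You instead construct the $X_1^i$ \'etale-locally in charts and then glue by proving a rigidity statement: a flat $W_2$-lift of $X_0^i$ inside $X_1$ is unique, because the governing sheaf $\sHom_{\sO_{X_0}}(\sI_{X_0^i/X_0},\sO_{X_0^i})$ vanishes (your annihilator computation). This is a different idea --- the paper never isolates uniqueness, while you obtain it as a byproduct, which is strictly more than the lemma asserts and in fact shows that the decomposition $X_1=\bigcup_i X_1^i$ is canonical. The cost is that you invoke the pseudo-torsor description of embedded flat deformations, whereas the paper's argument is a bare-hands check once the global formula is accepted. Both proofs ultimately reduce the verification to the same local model $W_2[x_1,\dots,x_n]/(x_1\cdots x_r)$; they differ only in how the local pieces are assembled into a global statement.
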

\begin{proof}
Set $\sI_i=I_i+pI_i,$ where $I_i$ is the ideal sheaf of $X_0^i$ in $X_0$. Then, $\sI_i$ is an ideal sheaf of $\sO_{X_1}$. We claim that the closed subschemes $X_1^i$s defined by $\sI_i$s have the property as claimed in the lemma. To show this, it suffices to prove the following properties:
\begin{enumerate}
  \item $\sO_{X_1}/\sI_i$ is flat over $W_2$,
  \item $\bigcap \sI_i=0$, and
  \item for each nonempty $J\subseteq I$, $\sO_{X_1}/\cup_{j\in J}\sI_j$ is flat over $W_2$.
\end{enumerate}
Since $\widehat{\sO_{X_1,x}}$ is faithfully flat over $\sO_{X_1,x}$ for each point $x\in X_1$, it suffices to verify the above claim after tensoring with $\widehat{\sO_{X_1,x}}$ for every $x\in X_1$. By \cite[Theorem 3.5, Proposition 3.14]{KKato1988}, there is an \'etale morphism $U\rightarrow X_1$ such that we have
$$\xymatrix{
U\ar[r]^-f \ar[dr]_{\pi'|_{U}} &\textrm{Spec}(W_2[x_1,\cdots,x_n]/(x_1\cdots x_r)) \ar[d]\\
 & \spec(W_2)
},$$
where $f$ is an \'etale morphism. As a consequence, there is an isomorphism
$$\alpha:\widehat{\sO_{X_1,x}}\cong W_2[[x_1,\cdots,x_n]]/(x_1\cdots x_r)$$ such that each $\sI_i\widehat{\sO_{X_1,x}}$ (whenever it is nonempty) is generated by $\alpha^{-1}(\Pi_{j\in J_i}x_j)$ for some nonempty set $J_i\subseteq\{1,\cdots,r\}$. Moreover, $\{1,\cdots,r\}$ is the disjoint union of $J_i$s. Then the claim follows from direct calculations.
\end{proof}

\subsection{Construction}
The strategy of our construction, which negate Problem \ref{prob_Illusie}, is as follows. Let $Z/W$ be a semistable family and $X/W$ be an admissible blow-up of $Z$ along a regular center $Y_0\subset Z_0$. Among many cases, there are two typical situations under which the log scheme $\bf X_0$ cannot be lifted to $\bf W_2$. 

\begin{description}
	\item[Case I] $Y_0$ is non $W_2$-liftable (Proposition \ref{prop_conterexample2}).\\
	\item[Case II] The pair $(Z_0,Y_0)$ is non $W_2$-liftable.
\end{description}
Although this strategy shall provides complicated semistable families negating Problem \ref{prob_Illusie}, we construct examples only when $Z$ is smooth over $W$. A study of case I provides us examples whose generic fiber is $\mathbb{P}^n$ for $n\geq 5$ (Example 1). This demonstrate a surprising fact that a semistable family of a quite simple variety may have bad hodge theoretic behavior on the central fiber. A study of Case II provides us examples of arbitrary dimension $\geq 2$ (Example 2).
 \begin{prop}\label{simple lemma}
Let $Z$ be a regular scheme which is smooth over $W$, whose special fiber is denoted by $Z_0$. Let $Y_0\subset Z_0$ be a proper smooth closed subvariety. Then the special fiber $X_0$ of the admissible blow-up $X=Bl_{Y_0}Z$ consists of two smooth components $Bl_{Y_0}Z_0$ and $\mathbb{P}(N_{Y_0/Z})$ which intersect transversally along $\mathbb{P}(N_{Y_0/Z_0})$.
\end{prop}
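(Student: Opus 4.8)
The statement is local, so I would work \'etale-locally on $Z$ and reduce to the model situation $Z = \spec(W[x_1,\dots,x_n])$ with $Y_0 \subset Z_0$ cut out by the ideal $(p, x_1,\dots,x_r)$ inside $Z_0 = \{p=0\}$; here the admissibility hypothesis (that $Y_0$ is regular and has normal crossings with $Z_0$, together with $Z$ being smooth over $W$ so $Z_0$ is a single smooth divisor) guarantees that $x_1,\dots,x_r$ can be chosen so that the conormal data behaves as expected. The plan is then to write down $X = Bl_{Y_0}Z$ explicitly as a closed subscheme of $Z\times\mathbb{P}^{r-1}$ (with $\mathbb{P}^{r-1}$ having coordinates in the classes of $p, x_1,\dots,x_{r-1}$ is awkward since $p$ is not a function one blows up along a smooth center of codimension $r+1$ in $Z$); more precisely the center $Y_0$ has codimension $r+1$ in the regular scheme $Z$, cut out by the regular sequence $(p,x_1,\dots,x_r)$, and $X$ is the corresponding Proj.

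The key computation is the description of $X_0 = X\times_W k$. On the chart where the exceptional coordinate corresponding to $p$ is invertible, the total transform of the divisor $\{p=0\}$ becomes $p' \cdot (\text{exceptional divisor})$, and one reads off that $X_0$ has exactly two irreducible components: the strict transform of $Z_0$, which is $Bl_{Y_0}Z_0$ (the blow-up of the smooth variety $Z_0$ along the smooth subvariety $Y_0$, hence smooth), and the exceptional divisor $E = \mathbb{P}(N_{Y_0/Z})$, which is a $\mathbb{P}^r$-bundle over $Y_0$ and hence smooth. I would verify on the standard affine charts of the blow-up that these two components are each regular and meet along the locus where $E$ maps to the strict transform, which is exactly $\mathbb{P}(N_{Y_0/Z_0}) \subset \mathbb{P}(N_{Y_0/Z})$ (the projectivization of the conormal directions tangent to $Z_0$), and that the intersection is transverse — equivalently, that in suitable \'etale local coordinates $X_0$ looks like $\{u_1 u_2 = 0\}$ in a regular ambient scheme. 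This last point is precisely what makes $X$ a (strictly) semistable family, consistent with the remark after Definition \ref{defn_admissible_blow-up}.

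The main obstacle, such as it is, is purely bookkeeping: one must be careful that $p$ (the uniformizer of $W$) is being treated as one of the $r+1$ equations defining the center, so the blow-up mixes the "arithmetic" direction with the "geometric" directions $x_1,\dots,x_r$, and one has to check on each chart of $\mathbb{P}^r$ that the total transform of $Z_0$ decomposes as claimed and that no extra components or singularities appear. I would organize this by covering $\mathbb{P}^r$ with its $r+1$ standard affine opens: on the open where the coordinate dual to $p$ is a unit one sees the component $E$ together with $Bl_{Y_0}Z_0$ meeting it transversally; on each open where a coordinate dual to some $x_i$ is a unit one sees only (the relevant chart of) $Bl_{Y_0}Z_0$ and $E$ again, and the gluing is the usual blow-up gluing. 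Once the local normal form is established, the global statements — that the components are $Bl_{Y_0}Z_0$ and $\mathbb{P}(N_{Y_0/Z})$, smooth, meeting transversally along $\mathbb{P}(N_{Y_0/Z_0})$ — follow by patching, since all the constructions involved ($Bl_{Y_0}$, $\mathbb{P}(N)$) are canonical and commute with the \'etale localization used.
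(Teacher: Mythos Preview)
Your proposal is correct and is precisely the standard local computation; the paper itself omits the proof entirely, simply remarking that it is ``fairly standard'' and referring to Fulton, \emph{Intersection Theory}, \S5.1 (deformation to the normal cone), which is exactly the construction you are unpacking in coordinates. Your identification of the center as a regular sequence $(p,x_1,\dots,x_r)$ of length $r+1$ and your chart-by-chart verification that $X_0$ splits as the strict transform $Bl_{Y_0}Z_0$ plus the exceptional $\mathbb{P}^r$-bundle $\mathbb{P}(N_{Y_0/Z})$, meeting transversally along the hyperplane sub-bundle $\mathbb{P}(N_{Y_0/Z_0})$, is the content behind that citation.
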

\begin{proof}
The proof is fairly standard and therefore omitted, see e.g Section 5.1 in \cite{Fulton1998}. 
\end{proof}
\noindent {\bf Example 1}
\begin{prop}\label{prop_conterexample2}
Notation as in Proposition \ref{simple lemma}.  Equip $X_0$ with its natural log structure, which makes it into a log scheme over $\bf k$. If $Y_0$ is non $W_2$-liftable, then $\bf X_0$ is non ${\bf W}_2$-liftable.
\end{prop}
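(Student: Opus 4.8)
The plan is to argue by contradiction: suppose $\mathbf{X_0}$ admits a log smooth lifting $\mathbf{X_1}$ over $\mathbf{W_2}$, and derive from it a $W_2$-lifting of the scheme $Y_0$, contradicting the hypothesis. The starting point is Lemma \ref{lem_w2lifting}, which tells us that the underlying scheme $X_1$ of any such lifting decomposes as a schematic union $X_1 = X_1^{(1)} \cup X_1^{(2)}$ of two closed subschemes, with $X_1^{(1)}$ a $W_2$-lifting of $\mathrm{Bl}_{Y_0}Z_0$, with $X_1^{(2)}$ a $W_2$-lifting of $\mathbb{P}(N_{Y_0/Z})$, and crucially with the schematic intersection $X_1^{(1)} \cap X_1^{(2)}$ a $W_2$-lifting of $\mathbb{P}(N_{Y_0/Z_0})$, the exceptional divisor of $\mathrm{Bl}_{Y_0}Z_0 \to Z_0$ (using Proposition \ref{simple lemma} to identify the intersection stratum).

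The second step is to recover $Y_0$ from this intersection. Over $\mathbf{k}$ we have the projective bundle $q: \mathbb{P}(N_{Y_0/Z_0}) \to Y_0$, and I would like to lift $q$ along with the total space. The natural candidate is to exploit that $X_1^{(2)}$ is a $W_2$-lifting of $\mathbb{P}(N_{Y_0/Z})$, which is itself a $\mathbb{P}^{m-1}$-bundle over $Y_0$ (here $m = \mathrm{codim}(Y_0, Z)$), and that $X_1^{(1)} \cap X_1^{(2)}$ sits inside $X_1^{(2)}$ as a relative hyperplane-bundle-type divisor. So the idea is: $X_1^{(2)}$, being a lifting of a projective bundle over $Y_0$, should itself be (at least étale-locally, hence after a descent argument) a projective bundle over a $W_2$-lifting of $Y_0$ — one needs that the base of a lifted $\mathbb{P}^N$-bundle lifts. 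Concretely, one pushes forward $\mathcal{O}$ or a suitable relative line bundle along the lifted projection to produce the lift of $Y_0$; the obstruction to lifting the bundle structure lives in cohomology groups that vanish for $\mathbb{P}^N$-fibers, so the projection $X_1^{(2)} \to (\text{lift of } Y_0)$ exists and its target is a flat $W_2$-scheme reducing to $Y_0$. That target is then the desired $W_2$-lifting of $Y_0$, giving the contradiction.

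The main obstacle, and the step that needs the most care, is the middle one: extracting a $W_2$-lifting of the \emph{base} $Y_0$ from a $W_2$-lifting $X_1^{(2)}$ of the \emph{total space} $\mathbb{P}(N_{Y_0/Z})$. The subtlety is that a priori $X_1^{(2)}$ is only given as an abstract flat deformation of $\mathbb{P}(N_{Y_0/Z})$ over $W_2$, not as a deformation of the pair (bundle, base), so one must show the projective-bundle structure propagates to the thickening. The clean way is to invoke that $R^i\pi_* \mathcal{O}$ and the higher obstruction groups for a $\mathbb{P}^N$-bundle are controlled (e.g. $R^{>0}\pi_*\mathcal{O}_{\mathbb{P}^N} = 0$, $H^{>0}(\mathbb{P}^N, T_{\mathbb{P}^N}) = 0$), so that the Stein factorization / relative Proj of $X_1^{(2)}$ over $W_2$ lifts $Y_0$; alternatively one may identify $Y_0$ directly as a closed subscheme cut out inside $X_1^{(1)}$ (the lifted blow-up) as the image of the exceptional divisor, using that $\mathrm{Bl}_{Y_0}Z_0 \to Z_0$ and its lift $X_1^{(1)}$ over $W_2$ have exceptional loci that are compatible.

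Either route reduces everything to standard deformation-theoretic vanishing for projective space, so once that is set up the argument closes. I would present the proof via the intersection stratum $X_1^{(1)} \cap X_1^{(2)} = \mathbb{P}(N_{Y_0/Z_0})_1$ and the lifted projection to $Y_0$, since that is the most direct: a $W_2$-lifting of $\mathbb{P}(N_{Y_0/Z_0})$ together with the fact that $\mathbb{P}(N_{Y_0/Z_0}) \to Y_0$ is a $\mathbb{P}^{m-2}$-bundle (so $m \geq 2$, which is automatic since $Y_0$ has positive codimension in $Z$ and the blow-up is admissible) yields a $W_2$-lifting of $Y_0$ by pushing forward, contradicting the assumption that $Y_0$ is non-$W_2$-liftable.
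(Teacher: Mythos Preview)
Your approach is essentially the same as the paper's: argue by contradiction, apply Lemma~\ref{lem_w2lifting} to obtain a $W_2$-lifting of the component $\mathbb{P}(N_{Y_0/Z})$, and then descend this to a $W_2$-lifting of the base $Y_0$ using that the projection is a $\mathbb{P}^N$-bundle. The paper packages the descent step cleanly by invoking a result of Cynk--van Straten (Proposition~\ref{cynklemma}), which says exactly that a lifting of the source of a morphism $\pi$ with $\pi_*\sO = \sO$ and $R^1\pi_*\sO = 0$ yields a lifting of the target---this is precisely the vanishing mechanism you identified.
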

\begin{proof}
Assume the contrary that $\bf X_0$ is ${\bf W}_2$-liftable. Then, by Lemma \ref{lem_w2lifting}, the irreducible component $\mathbb{P}(N_{Y_0/Z})$ is $W_2$-liftable. However, a result of Cynk-van Straten (Proposition \ref{cynklemma} below) implies that the base of the natural projection $\mathbb{P}(N_{Y_0/Z})\to Y_0$, which is $Y_0$, is also $W_2$-liftable. 
\end{proof}
For reader's convenience, we include the result of Cynk-van Straten as follows.
\begin{prop}{\cite[Theorem 3.1]{Cynk2009}}\label{cynklemma}
	Let $\pi:Y\rightarrow X$ be a morphism of schemes over $k$ and let $S=\spec\ A$, where $A$ is an artinian local ring with residue field $k$. Assume that $\sO_X =\pi_\ast\sO_Y$ and $R^1\pi_\ast(\sO_Y)=0$. Then for every lifting $\sY\rightarrow S$ of $Y$ as a scheme there exists a lifting $\sX\rightarrow S$ making the
following diagram commutative
	$$\xymatrix{
		Y \ar@{^{(}->}[r]\ar[d] & \sY \ar[d] \\
		X \ar@{^{(}->}[r] & \sX
	}$$
\end{prop}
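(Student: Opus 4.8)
The plan is to construct the lifting $\sX$ of $X$ explicitly as a sheaf-theoretic pushforward, rather than through obstruction classes. Since $\sY\to S$ is a lifting of $Y$, the underlying topological space of $\sY$ is canonically that of $Y$; set $\sX:=(X,\pi_\ast\sO_{\sY})$, a ringed space whose structure sheaf is a sheaf of $A$-algebras on the space $X$, and let $\sY\to\sX$ be the morphism given by the pair $(\pi,\mathrm{id}_{\pi_\ast\sO_{\sY}})$, which is a morphism over $S$ by construction. Reducing modulo $\mathfrak m_A$ recovers $\pi\colon Y\to X$ itself, because the structure homomorphism of $\pi$ is exactly the identification $\sO_X=\pi_\ast\sO_Y$ furnished by the hypothesis; hence the displayed square commutes tautologically. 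The whole content of the proposition is therefore the claim that $(X,\pi_\ast\sO_{\sY})$ is a \emph{scheme}, \emph{flat} over $A$, with closed fibre $X$.

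I would prove this by dévissage on the Artinian local ring $A$. A filtration of $A$ by ideals with successive quotients $\cong k$ reduces everything to a single small extension $A'\twoheadrightarrow A$ with square-zero kernel $I$ annihilated by $\mathfrak m_{A'}$ (so $I$ is a finite-dimensional $k$-vector space), under the inductive assumption that $\pi_\ast\sO_{\sY\otimes_{A'}A}$ is already flat over $A$ with closed fibre $X$. Flatness of $\sO_{\sY}$ over $A'$ gives the exact sequence of sheaves on $Y$
\begin{equation*}
0\longrightarrow \sO_Y\otimes_k I\longrightarrow \sO_{\sY}\longrightarrow \sO_{\sY}\otimes_{A'}A\longrightarrow 0 ,
\end{equation*}
and since $R^1\pi_\ast(\sO_Y\otimes_k I)=(R^1\pi_\ast\sO_Y)\otimes_k I=0$ by hypothesis, the long exact sequence of $R\pi_\ast$ produces
\begin{equation*}
0\longrightarrow \sO_X\otimes_k I\longrightarrow \pi_\ast\sO_{\sY}\longrightarrow \pi_\ast(\sO_{\sY}\otimes_{A'}A)\longrightarrow 0 .
\end{equation*}
Thus $\pi_\ast\sO_{\sY}$ is a square-zero extension of the $A$-flat sheaf $\pi_\ast(\sO_{\sY}\otimes_{A'}A)$ by $\sO_X\otimes_k I$; moreover the natural comparison map $I\otimes_A\pi_\ast(\sO_{\sY}\otimes_{A'}A)\to\sO_X\otimes_k I$ is an isomorphism, because it is obtained by applying $\pi_\ast$ to the sheaf-level identification $I\otimes_A\sO_{\sY\otimes_{A'}A}\xrightarrow{\ \sim\ }\sO_Y\otimes_k I$ and $R^1\pi_\ast$ vanishes on all the sheaves built out of copies of $\sO_Y$ that enter (again by the hypothesis, propagated along the filtration). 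The standard flatness criterion for square-zero extensions then forces $\pi_\ast\sO_{\sY}$ to be flat over $A'$ with closed fibre $X$, completing the induction. A flat infinitesimal deformation of the scheme $X$ over the Artinian base $\Spec A$ is again a scheme, for over each affine open $U=\Spec R_0\subseteq X$ the ring $R:=\Gamma(U,\pi_\ast\sO_{\sY})$ is a flat $A$-algebra lifting $R_0$ and $\pi_\ast\sO_{\sY}|_U$ is the quasi-coherent sheaf it determines, so $\sX$ is locally $\Spec R$.

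The main obstacle is bookkeeping rather than conceptual difficulty: one must control $\pi_\ast$ and the base-change identity $\pi_\ast\sO_{\sY}\otimes_A k\cong\sO_X$ without assuming $\pi$ proper, so the cohomology-and-base-change theorem is unavailable. What saves the argument is that only $R^0\pi_\ast$ and $R^1\pi_\ast$ ever appear --- governed by the plain long exact sequence of derived functors --- and the single hypothesis $R^1\pi_\ast\sO_Y=0$ propagates, via that sequence, up every filtration of $\sO_{\sY}$ (and of the ideals $\mathfrak a\,\sO_{\sY}$) whose graded pieces are copies of $\sO_Y$, which over an Artinian base is exactly what one needs. A minor secondary point is the quasi-coherence of $\pi_\ast\sO_{\sY}$, which requires the mild finiteness hypotheses on $\pi$ implicit in the statement (automatically satisfied in the applications here, where $\pi$ is even a projective bundle).
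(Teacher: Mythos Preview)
The paper does not supply its own proof of this proposition: it is quoted verbatim from \cite[Theorem 3.1]{Cynk2009} and used as a black box in the arguments of \S4. So there is no ``paper's proof'' to compare against directly.

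Your argument is correct and is in fact the Cynk--van Straten argument: define $\sX$ as the ringed space $(X,\pi_\ast\sO_{\sY})$, then verify by d\'evissage along a filtration of the Artinian base that this sheaf of $A$-algebras is flat over $A$ with reduction $\sO_X$, using only the long exact sequence of $R\pi_\ast$ and the hypothesis $R^1\pi_\ast\sO_Y=0$ at each step. The two caveats you flag (the need for mild finiteness on $\pi$ to ensure quasi-coherence of $\pi_\ast\sO_{\sY}$, and the bookkeeping for the base-change identification $\pi_\ast\sO_{\sY}\otimes_A k\cong\sO_X$) are genuine but harmless in the applications here, where $\pi$ is a projective bundle.
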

Proposition \ref{prop_conterexample2} provides the following examples: take a smooth projective variety $Y_0$ over $k$ which is non $W_2$-liftable, and take a closed embedding $Y_0\hookrightarrow Z_0$ over $k$ into a smooth projective variety such that the codimension $\textrm{codim}_{Z_0}Y_0\geq 2$ and $Z_0$ admits a smooth lifting $Z$ over $W$ (for example take $Z_0$ to be a projective space of high dimension). Then $X=\textrm{Bl}_{Y_0}Z$ is a semistable family over $W$ whose special fiber $\bf X_0$ is non ${\bf W}_2$-liftable.\\

\noindent {\bf Example 2}\\

Notice that Mukai \cite{Mukai2013} has obtained a nice generalization to higher dimension of Raynaud's classical example \cite{Raynaud1978} of non $W_2$-liftable smooth projective surface over $k$. His construction, together with an idea of Liedtke-Satriano \cite[Theorem 1.1 (a)]{LM2014}, allows us to produce examples of all relative dimensions $\geq 2$. Let us recall first the following

\begin{defn}[\cite{Mukai2013}]\label{defn_Tango}
A smooth projective curve $C$ over $k$ of genus $\geq2$ is called a Tango-Raynaud curve, if there exists a rational function $f$ on $C$ such that $df\neq0$ and that $(df)=pD$ for some ample divisor $D$.
\end{defn}
A typical example of Tango-Raynaud curve has its affine model defined by the following polynomial
$$G(x^p)-x=y^{pe-1},$$
where $G$ is a polynomial of degree $e\geq 1$ in the variable $x$. The following lemma is well known.
\begin{lem}{\cite[\S2]{Mukai2013}}\label{lem_tango_curve}
Let $C$ be a Tango-Raynaud curve. Then, there exists a rank two vector bundle $E$ on $C$ together with a smooth curve $D$ contained in its projectification $\mathbb{P}_C(E)$, such that the composite of natural maps $D\rightarrow \mathbb{P}_C(E)\rightarrow C$ is the relative Frobenius $F_{D/k}: D\rightarrow D'=C$ over $k$.
\end{lem}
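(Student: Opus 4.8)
The plan is to realize the desired rank two bundle $E$ directly from the Tango-Raynaud structure on $C$. Write $C'$ for the curve $C$ regarded as the target of the relative Frobenius $F=F_{C/k}:C\to C'$, so that $F$ factors the absolute Frobenius. The rational function $f$ with $df\neq 0$ and $(df)=pD$ for an ample divisor $D$ gives a global section of $F_{\ast}\Omega^1_{C/k}\otimes\sO_{C'}(-pD)$ whose zero divisor defines (via the Cartier operator / the Frobenius-descent exact sequence) a nontrivial extension class. Concretely, I would exhibit a nonsplit extension
\begin{equation*}
0\to \sO_{C'}\to E\to \sL\to 0
\end{equation*}
on $C'$ with $\sL$ chosen so that $\deg\sL$ matches the datum $(df)=pD$; the point is that the section $df$ trivializes $F^{\ast}$ of this extension in a way that forces the pullback $F^{\ast}E$ to split with a distinguished sub-line-bundle, so that the $\mathbb{P}^1$-bundle $\mathbb{P}_{C}(F^{\ast}E)=\mathbb{P}_{C'}(E)\times_{C'}C$ carries a section $D\hookrightarrow\mathbb{P}_C(F^{\ast}E)$. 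Renaming $\mathbb{P}_C(F^{\ast}E)$ as $\mathbb{P}_C(E)$ after base change along $F$ (abuse as in the statement), this section is the curve we want, and the composite $D\hookrightarrow\mathbb{P}_C(E)\to C$ is by construction the structural map of a section of a Frobenius-pulled-back bundle, hence equals $F_{D/k}$.

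The key steps, in order: first, unwind the Tango-Raynaud condition to produce an exact sequence on $C'$ whose extension class is governed by the section $df$; the relevant input is the classical fact that $df$ (more precisely its associated $p$-closed one-form / the failure of $d$ to be injective on $F_\ast\sO_C$) defines a nonzero element of a suitable $\mathrm{Ext}^1$ or $H^1$ group on $C'$. Second, check that pulling back this extension along $F$ kills the class: this is where the identity $(df)=pD$ enters, since the obstruction to splitting is, after Frobenius pullback, divisible by $p$ and hence trivial in characteristic $p$ cohomology; this yields a canonical splitting $F^{\ast}E\cong \sO_C\oplus \sL'$ and hence a section of $\mathbb{P}_C(F^{\ast}E)$. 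Third, identify the image of that section as a smooth curve $D$ (smoothness being automatic since a section of a $\mathbb{P}^1$-bundle over a smooth curve is smooth), and verify that the composite to $C$ is the relative Frobenius by comparing it, fiber by fiber, with the universal property defining $F_{D/k}$. Fourth, record that $D$ has genus $\geq 2$ (it is isomorphic to $C'\cong C$), so the curve is of the asserted type.

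The main obstacle I expect is the second step: making precise the sense in which $df$ splits the Frobenius pullback of the extension, i.e. identifying the correct cohomological home for the extension class on $C'$ and verifying that $F^{\ast}$ annihilates it using $(df)=pD$. This is the heart of Raynaud's and Tango's original arguments, and the bookkeeping with the Cartier operator and the degree of the ample divisor $D$ must be done carefully; everything else (constructing $\mathbb{P}_C(E)$, extracting a section from a split bundle, checking smoothness and the Frobenius identification) is formal once that splitting is in hand. Since this lemma is attributed to \cite[\S2]{Mukai2013}, I would in practice cite that reference for the splitting computation and only sketch the extraction of $E$, $D$ and the identification of the composite with $F_{D/k}$ to keep the exposition self-contained.
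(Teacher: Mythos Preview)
The paper gives no proof of this lemma; it is stated with the citation \cite[\S2]{Mukai2013} and nothing more. So there is nothing to compare against beyond noting that your sketch is indeed the Raynaud--Tango--Mukai mechanism the citation points to: the Tango condition produces a nonzero extension class annihilated by Frobenius pullback, and the resulting splitting furnishes the curve $D$.

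That said, your bookkeeping has a real gap. You put $E$ on $C'$, observe that $F^{\ast}E$ splits on $C$, and take $D$ to be a \emph{section} of $\mathbb{P}_C(F^{\ast}E)\to C$. But a section projects \emph{isomorphically} to $C$, so the composite $D\to C$ has degree $1$; your clause ``hence equals $F_{D/k}$'' is false as written, and the ``renaming'' of $\mathbb{P}_C(F^{\ast}E)$ as $\mathbb{P}_C(E)$ does not repair this. The correct step is to compose the section with the cartesian projection $\mathbb{P}_C(F^{\ast}E)=\mathbb{P}_{C'}(E)\times_{C'}C\to\mathbb{P}_{C'}(E)$; then the composite $D\to\mathbb{P}_{C'}(E)\to C'$ is the relative Frobenius $F_{C/k}:C\to C'$. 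To match the lemma's labelling (the bundle lives on the \emph{given} Tango--Raynaud curve) you then relabel $C'\mapsto C$ and $C\mapsto D$, which is legitimate because over a perfect field the Frobenius twist of a Tango--Raynaud curve is again one.

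Two further points you glossed over. First, one must check that the map $D\to\mathbb{P}_{C'}(E)$ is a closed immersion, not merely a finite morphism onto a section; this comes down to the new line subbundle of $F^{\ast}E$ (the summand $F^{\ast}\sL$ in your splitting) not being the Frobenius pullback of any line subbundle of $E$, which is exactly where the \emph{nonsplitness} of the extension on $C'$ is used. Second, your ``Frobenius pullback kills the class because it becomes divisible by $p$'' is not the mechanism: everything is already in characteristic $p$, and the vanishing of $F^{\ast}\eta$ comes from the Cartier--theoretic exact sequence $0\to\sO_{C'}\to F_{\ast}\sO_C\to B^1\to 0$ and the fact that the exact form $df$ with $(df)=pD$ furnishes a section of $B^1\otimes\sL^{-1}$ whose coboundary is $\eta$.
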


Now we proceed to the last construction in this paper.  We shall use notation in Lemma \ref{lem_tango_curve} in the following
\begin{prop}\label{examples of all dimensions}
Let $C$ be a Tango-Raynaud curve over $k$. Choose and then fix a smooth lifting $\sC$ of $C$ over $W$. Then the vector bundle $E$ can be lifted to a vector bundle over $\sC$. Choose such a lifting $\sE$ of $E$. For each natural number $d\geq 2$, set $Z^d=\mathbb{P}_{\sC}(\sE\oplus \sO_{\sC}^{d-2})$ and $X^d=Bl_{D}Z^d$, where $D$ is a closed scheme of $Z_d$ by the natural inclusions
$$D\subset \mathbb{P}_{C}(E)\subset \mathbb{P}_{C}(E\oplus \sO_C^{d-2})\subset Z_d.$$
Then, the special fiber of the semistable family $X^d$ over $W$ is non ${\bf W}_2$-liftable.
\end{prop}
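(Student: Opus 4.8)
The plan is to follow the \textbf{Case II} strategy. Since $X^d=Bl_{D}Z^d$ is an admissible blow-up of the smooth (hence semistable) family $Z^d/W$, it is itself semistable, and I will show that a $\mathbf{W}_2$-lifting of $\mathbf{X^d_0}$ would force a $W_2$-lifting of the pair $(Z^d_0,D)$, which is obstructed by the Tango--Raynaud structure on $C$. By Proposition \ref{simple lemma}, $X^d_0=V_1\cup V_2$ with $V_1:=Bl_{D}Z^d_0$ and $V_2:=\mathbb{P}(N_{D/Z^d})$ meeting transversally along $E:=\mathbb{P}(N_{D/Z^d_0})$, which is exactly the exceptional divisor of the blow-down $\sigma\colon V_1\to Z^d_0$. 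If $\mathbf{X^d_0}$ lifts over $\mathbf{W}_2$, then by Lemma \ref{lem_w2lifting} the lift restricts to a $W_2$-lifting $\widetilde V_1$ of $V_1$ inside which $E$ lifts to a closed subscheme $\widetilde E$ (a $W_2$-lift of the scheme $E$).

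Next I would \emph{contract}. Since $Z^d_0$ and $D$ are smooth, Lemma \ref{lem_blow-up} gives $\sigma_*\sO_{V_1}=\sO_{Z^d_0}$ and $R^1\sigma_*\sO_{V_1}=0$; applying Proposition \ref{cynklemma} to $\sigma$, the lifting $\widetilde V_1$ induces a $W_2$-lifting $\widetilde Z$ of $Z^d_0$ together with $\widetilde\sigma\colon\widetilde V_1\to\widetilde Z$ lifting $\sigma$. Applying Proposition \ref{cynklemma} again to the projective bundle $E\to D$ (with the lift $\widetilde E$) produces a $W_2$-lifting $\widetilde D$ of $D$ and a morphism $\widetilde E\to\widetilde D$; as the fibres of $\widetilde E\to\widetilde D$ are projective spaces, $\widetilde\sigma|_{\widetilde E}$ is constant on them and factors as $\widetilde E\to\widetilde D\xrightarrow{h}\widetilde Z$, where $h$ is finite, reduces mod $p$ to the closed immersion $D\hookrightarrow Z^d_0$, and hence (by Nakayama over $W_2$) is itself a closed immersion. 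Thus the pair $(Z^d_0,D)$ lifts over $W_2$; for $d=2$ the morphism $\sigma$ is an isomorphism and this is just the Raynaud surface $\mathbb{P}_C(E)$ together with the curve $D$.

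It then remains to contradict the $W_2$-liftability of $(Z^d_0,D)$, where $Z^d_0=\mathbb{P}_C(E\oplus\sO_C^{d-2})$ with bundle projection $\pi$ to $C$, and where by Lemma \ref{lem_tango_curve} the composite $D\subset\mathbb{P}_C(E)\subset Z^d_0\xrightarrow{\pi}C$ is the relative Frobenius $F_{D/k}\colon D\to D'=C$. Given a hypothetical $W_2$-lifting $(\widetilde Z,\widetilde D)$: Proposition \ref{cynklemma} applied to $\pi$ (a projective bundle, so $\pi_*\sO=\sO_C$, $R^1\pi_*\sO=0$) yields a $W_2$-lift $\widetilde C$ of $C$ and $\widetilde\pi\colon\widetilde Z\to\widetilde C$ lifting $\pi$; since $H^2(C,\sO_C)=0$ ($\dim C=1$) and $\mathrm{Br}(C)=0$ (Tsen), $\widetilde\pi$ is a projective bundle $\mathbb{P}_{\widetilde C}(\widetilde{\sW})$ for a rank-$d$ bundle $\widetilde{\sW}$ on $\widetilde C$ lifting $E\oplus\sO_C^{d-2}$. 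Restricting to $\widetilde D$, the composite $\widetilde D\hookrightarrow\widetilde Z\xrightarrow{\widetilde\pi}\widetilde C$ is a finite flat degree-$p$ lifting of $F_{D/k}$, and $\widetilde D\hookrightarrow\widetilde Z$ is a quotient line bundle of $F^*_{\widetilde D/\widetilde C}\widetilde{\sW}$ lifting the Tango datum carried by the summand $E$. This is precisely the embedded Frobenius configuration that a Tango--Raynaud curve (Definition \ref{defn_Tango}) cannot carry over $W_2$ — the ample Tango relation $(df)=pD_0$ does not lift — which gives the desired contradiction. Hence $(Z^d_0,D)$, and therefore $\mathbf{X^d_0}$, is non-$\mathbf{W}_2$-liftable.

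Finally, $X^d\to\Spec(W)$ is a semistable family of relative dimension $d$, so letting $d$ range over all integers $\geq 2$ produces the examples claimed. The main obstacle is the last step: making precise in what sense a $W_2$-lifting of the embedded relative Frobenius $D\hookrightarrow\mathbb{P}_C(E)$ contradicts the defining relation $(df)=pD_0$ with $D_0$ ample. This is Raynaud's original obstruction \cite{Raynaud1978}, in the embedded higher-dimensional form developed by Mukai \cite{Mukai2013} and used by Liedtke--Satriano \cite{LM2014}, which I would invoke rather than reprove; the reduction in the first two paragraphs is then formal, given Lemma \ref{lem_w2lifting} and Proposition \ref{cynklemma}.
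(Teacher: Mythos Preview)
Your overall route is the paper's: use Lemma \ref{lem_w2lifting} to extract a $W_2$-lift of a component together with the intersection divisor, then Proposition \ref{cynklemma} to descend along fibrations, ultimately producing a $W_2$-morphism $F_1\colon\widetilde D\to\widetilde C$ lifting the relative Frobenius $F_0=F_{D/k}$. The paper treats only $d=2$, where $V_1=Z^2_0$ and $E=D$, so no contraction is needed and a single application of Proposition \ref{cynklemma} to $Z_0\to C$ suffices. For general $d$ your factorization of $\widetilde\sigma|_{\widetilde E}$ through $\widetilde D$ is not fully justified (Proposition \ref{cynklemma} does not tell you the fibres of $\widetilde E\to\widetilde D$ are projective spaces), but this is repairable, for instance by taking the Stein factorization of $\widetilde E\to\widetilde C$ instead.

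The substantive gap is the final contradiction, which you leave as a black box and misattribute. By your own argument you already have a $W_2$-lift $F_1$ of the relative Frobenius; that alone suffices, and the paper's contradiction is a two-line degree count. Since $dF_0=0$, the differential $dF_1\colon F_1^\ast\Omega_{\widetilde C/W_2}\to\Omega_{\widetilde D/W_2}$ is divisible by $p$; it is nonzero (else the local expansion $t\mapsto s^p+p(\cdots)$ would force the $s^p$-coefficient into $pW_2$). Hence $\tfrac{1}{p}dF_1$ reduces to a nonzero $\sO_D$-map $F_0^\ast\Omega_{C/k}\to\Omega_{D/k}$ between line bundles of degrees $p(2g-2)$ and $2g-2$ on $D$ (with $g=g_C=g_D\geq 2$), which is impossible. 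The Tango--Raynaud condition plays no role here; it was consumed entirely in Lemma \ref{lem_tango_curve} to arrange that $D\to C$ is the relative Frobenius. Your detour through Tsen's theorem, the projective-bundle structure of $\widetilde\pi$, the lifted quotient line bundle, and the ``Tango relation $(df)=pD_0$'' is therefore unnecessary and misplaces the obstruction: what fails to lift is not the Tango datum but simply the Frobenius of a curve of genus $\geq 2$.
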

\begin{proof}
We prove the statement for $d=2$ only (the proof for $d\geq 3$ is the same). Set
	$$
	C_0=C, \quad Y_0=D, \quad Z_0=\mathbb{P}_C(E), \quad Z=Z_2.
	$$
Assume the contrary that the special fiber of $Bl_{Y_0}Z$ is ${\bf W}_2$-liftable. It follows from Lemma \ref{lem_w2lifting} that the pair $(Z_0, Y_0)$ consisting of the component $Z_0=Bl_{Y_0}Z_0$ of $X_0$ together with the divisor $Y_0=\mathbb{P}(N_{Y_0/Z})\cap Z_0\subset X_0$ lift to a pair $(Z_1,Y_1)$ over $W_2$ (The scheme $Z_1$ is not necessarily the mod $p^2$-reduction of $Z$). On the other hand, Proposition \ref{cynklemma} implies that the projection $Z_0\to C_0$ is the reduction of a certain $W_2$-morphism $Z_1\to C_1$. Therefore, the composite $F_0: Y_0\hookrightarrow Z_0\to C_0$ lifts to the composite $F_1: Y_1\hookrightarrow Z_1\to C_1$ over $W_2$. But this leads to a contradiction: the nonzero morphism $dF_1: F_1^*\Omega_{C_1/k}\to \Omega_{Y_1/k}$ is divisible by $p$ and it induces a nonzero morphism over $k$
	$$
	\frac{dF_1}{p}:F_0^*\Omega_{C_0/k}\to \Omega_{Y_0/k},
	$$
	which is impossible because of the degree. Therefore, $\bf X_0$ is indeed non ${\bf W}_2$-liftable as claimed.
\end{proof}
\appendix
\section{Preliminarires on semistable reductions}
In this appendix we present some facts on semistable reductions. They are more or less standard material. We collect them here for the convenience of our readers.
\begin{defn}{\cite[\S 2.4]{deJong1996}}
Let $V$ be a regular Noetherian scheme. Let $D\subset V$ be a divisor of $X$ and  $D_i\subset D$, $i\in I$ be its irreducible components (considered as reduced closed subschemes). We say that $D$ is a \textit{strict normal crossing divisor} if the following conditions hold:
	\begin{enumerate}
		\item $D$ is reduced, i.e. $D=\bigcup_{i\in I} D_i$ (scheme-theoretically),
		\item For any nonempty subset $J\subset I$, $D_J:=\bigcap_{j\in J}D_j$ is a regular subscheme of codimension $\sharp J$ in $S$.
	\end{enumerate}
A divisor $D\subset V$ is called \textit{normal crossing} if there is a surjective \'etale morphism $V'\to V$ such that the scheme-theoretic inverse image of $D$ is a strict normal crossing divisor on $X'$.
\end{defn}

\begin{defn}
Let $A$ be a local ring and $x_1,\dots,x_r\in m_A$. We say that $x_1,\dots,x_r$ form a part of a parameter system if their images in $m_A/m_A^2$ are linearly independent over $A/m_A$.
\end{defn}
\begin{lem}\label{lem_SNC_algebra}
Let $A$ be a Noetherian regular local ring and $x_1,\dots,x_r\in m_A$. Then the subscheme defined by $x_1x_2\cdots x_r$ is a strict normal crossing divisor if and only if $x_1,\dots,x_r$ form a part of a parameter system.
\end{lem}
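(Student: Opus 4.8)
The plan is to read both directions off the definition of a strict normal crossing divisor, using only two standard facts about a Noetherian regular local ring $A$: that $A$ is a unique factorization domain (Auslander--Buchsbaum), and that for $y_1,\dots,y_s\in m_A$ the quotient $A/(y_1,\dots,y_s)$ is regular of dimension $\dim A-s$ precisely when the images $\bar y_1,\dots,\bar y_s$ are linearly independent in $m_A/m_A^2$. The latter I would justify in one line from the isomorphism $m_{A/(y_1,\dots,y_s)}/m^2\cong(m_A/m_A^2)/\langle\bar y_1,\dots,\bar y_s\rangle$, together with the Krull inequality $\dim A/(y_1,\dots,y_s)\ge\dim A-s$ and the fact that Krull dimension never exceeds embedding dimension. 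Before anything else I would make the set-up precise: in speaking of ``the divisor defined by $x_1\cdots x_r$'' with the $D_i:=V(x_i)$ in the role of its components, one tacitly assumes the $x_i$ nonzero and pairwise non-associate, so that the $D_i$ are genuinely the irreducible components of $D$. Without this the ``only if'' direction fails, as $V(xy)$ in $k[[x,y]]$ is strict normal crossing although $xy\in m^2$.

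For ``part of a parameter system $\Rightarrow$ strict normal crossing'': linear independence of $\bar x_1,\dots,\bar x_r$ passes to every subfamily, so by the criterion above $A/(x_j : j\in J)$ is regular local of dimension $\dim A-\sharp J$ for every nonempty $J\subseteq\{1,\dots,r\}$. Taking $J$ a singleton, each $A/(x_i)$ is a regular local ring, hence a domain, so each $x_i$ is a prime element and $D_i=V(x_i)$ is an integral divisor. Since $A$ is a UFD and the $x_i$ are non-associate primes, $x_1\cdots x_r$ is squarefree and $(x_1\cdots x_r)$ is a radical ideal; hence $D:=V(x_1\cdots x_r)$ is reduced, agrees scheme-theoretically with $D_1\cup\cdots\cup D_r$, and has the $D_i$ as its irreducible components, which is condition (1). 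For condition (2), $\bigcap_{j\in J}D_j=\Spec A/(x_j : j\in J)$ is regular of codimension $\sharp J$ by the criterion.

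For the converse I would note that only the single intersection of all the components is needed. If $D=V(x_1\cdots x_r)$ is strict normal crossing with components $D_i=V(x_i)$, then condition (2) with $J=\{1,\dots,r\}$ says $D_1\cap\cdots\cap D_r=\Spec A/(x_1,\dots,x_r)$ is regular of codimension $r$, i.e. $\dim A/(x_1,\dots,x_r)=\dim A-r$; the criterion then forces $\bar x_1,\dots,\bar x_r$ to be linearly independent in $m_A/m_A^2$, which is exactly the assertion.

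I do not anticipate a genuine obstacle: the two ring-theoretic inputs are standard and the squarefree/radical bookkeeping is elementary, so the forward direction is just a verification of the two axioms and the backward direction uses a single intersection. The only point that requires care is the one flagged above, namely being explicit that the $x_i$ are meant to cut out the irreducible components of $D$; everything else is routine.
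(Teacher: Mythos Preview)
Your argument is correct and is essentially what the paper intends: its ``proof'' consists of the single citation \textit{See \cite[Theorem 14.2]{Matsumura1986}}, which is precisely your criterion that $A/(x_{j}:j\in J)$ is regular of dimension $\dim A-\sharp J$ if and only if the images of the $x_j$ are linearly independent in $m_A/m_A^2$. You have simply unpacked that citation, supplying the reducedness check via the UFD property and verifying both axioms of the SNC definition explicitly; the paper leaves all of this implicit. Your caveat that the statement tacitly assumes the $V(x_i)$ are the irreducible components of $D$ (equivalently, the $x_i$ are nonzero and pairwise non-associate) is a genuine clarification the paper omits.
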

\begin{proof}
See \cite[Theorem 14.2]{Matsumura1986}.
\end{proof}

\begin{lem}\label{lem_reg_lc}
Let $A$ be a Noetherian regular local ring and $I\subset m_A$ be an ideal such that $A/I$ is a regular local ring. Then there are $x_1,\dots,x_r\in m_A$, $r=\dim A-\dim A/I$ such that 
	\begin{enumerate}
		\item $x_1,\dots,x_r$ form a part of a parameter system, and
		\item $I=(x_1,\dots,x_r)$.
	\end{enumerate}
\end{lem}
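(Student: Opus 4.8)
The plan is to pin down the required elements inside $I$ by a dimension count on cotangent spaces, and then promote the inclusion $(x_1,\dots,x_r)\subseteq I$ to an equality by means of Krull's principal ideal theorem. Write $B=A/I$ and $d=\dim B$, so $r=\dim A-d$, and let $k=A/m_A=B/m_B$ be the common residue field (note $I\subseteq m_A$ by hypothesis, so $B$ is indeed local with maximal ideal $m_A/I$). Since $B$ is regular, $\dim_k m_B/m_B^2=d$, whereas $\dim_k m_A/m_A^2=\dim A=d+r$ because $A$ is regular. The surjection $A\to B$ identifies $m_B/m_B^2$ with $(m_A/m_A^2)$ modulo the image $\bar I$ of $I$, whence $\dim_k\bar I=(d+r)-d=r$.

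First I would choose $x_1,\dots,x_r\in I$ whose images in $m_A/m_A^2$ constitute a $k$-basis of $\bar I$. By construction these images are linearly independent over $k$, so $x_1,\dots,x_r$ form a part of a parameter system by the very definition; this gives property (1). It is then standard --- extend $x_1,\dots,x_r$ to a full regular system of parameters of $A$ and count the generators of the maximal ideal of the quotient, combined with Krull's dimension inequality --- that $C:=A/(x_1,\dots,x_r)$ is a regular local ring of dimension $d$.

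Next I would compare $C$ with $B$. Since $(x_1,\dots,x_r)\subseteq I$, there is a surjection $C\twoheadrightarrow B$ with kernel $J:=I/(x_1,\dots,x_r)\subseteq m_C$, and both $C$ and $B$ are regular local rings, hence domains, of the same dimension $d$. If $J\neq 0$, pick a nonzero $\bar a\in J$; as $C$ is a domain and $\bar a$ is a nonzero nonunit, every minimal prime over $(\bar a)$ has height exactly $1$ by the Hauptidealsatz, so $\dim C/(\bar a)\leq d-1$. But $B$ is a quotient of $C/(\bar a)$, which forces $d=\dim B\leq d-1$, a contradiction. Hence $J=0$, that is $I=(x_1,\dots,x_r)$, which is property (2).

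I do not anticipate a genuine obstacle: the whole argument is elementary commutative algebra (the statement is essentially contained in \cite{Matsumura1986}), and the only step requiring a modicum of care is the last one, where one must invoke Krull's principal ideal theorem to rule out a nonzero kernel between two equidimensional Noetherian local domains.
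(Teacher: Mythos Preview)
Your argument is correct. The paper's proof is a two-line citation of Matsumura: it first invokes \cite[Theorem~21.2]{Matsumura1986} to write $I=(x_1,\dots,x_r)$ for an $A$-regular sequence, and then \cite[Theorem~14.2]{Matsumura1986} to conclude that the images of the $x_i$ in $m_A/m_A^2$ are linearly independent. You proceed in the opposite order: you first produce elements of $I$ that are linearly independent in the cotangent space by a dimension count, and then upgrade $(x_1,\dots,x_r)\subseteq I$ to an equality via the Hauptidealsatz on the regular domain $C=A/(x_1,\dots,x_r)$. Both routes are standard and equally short; yours is a bit more self-contained since it avoids invoking the Cohen--Macaulay/regular-sequence machinery, while the paper's version trades that for a direct appeal to known theorems.
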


\begin{proof}
	By \cite[Theorem 21.2]{Matsumura1986}, $I=(x_1,\dots,x_r)$ for an $A$-regular sequence $x_1,\dots,x_r\in m_A$. By \cite[Theorem 14.2]{Matsumura1986}, the images of such $x_1,\dots,x_r$ in $m_A/m_A^2$ must be linearly independent over $A/m_A$.
\end{proof}
The above lemmas suggest the following
\begin{defn}\label{defn_NC_subscheme}
Let $V$ be a regular Noetherian scheme and $D\subset V$ a normal crossing divisor. We say that $D$ has strict normal crossings with a regular closed subscheme $Y\subset V$ if \'etale locally there exist $x_1,\dots,x_k$ such that
	\begin{enumerate}
		\item $x_1,\dots,x_k$ form a part of a parameter system,
		\item $D=\{x_1\cdots x_r=0\}$ for some $1\leq r\leq k$,
		\item $Y=\{x_s=0,\dots, x_k=0\}$ for some $1\leq s\leq k$.
	\end{enumerate}
\end{defn}

\begin{defn}{\cite[\S 2.16]{deJong1996}}\label{defn_ssreduction}
Let $R$ be a discrete valuation ring. Let $K$ denote its fractional field and $k$ the residue field. Then an $R$-scheme $Z$ is a \textit{semistable reduction} over $\spec(R)$ if  the following two properties hold:
	\begin{enumerate}
		\item the generic fiber $Z_{K}=Z\times_R{K}$ is smooth over $K$,
		\item the special fiber $Z_{0}=Z\times_Rk$ is a normal crossing divisor of $Z$.
	\end{enumerate}
If furthermore $Z_0$ is a strict normal crossing divisor of $Z$, we say that $Z$ is a \textit{strictly semistable reduction}. A (strictly) semistable family over $\Spec(R)$ is a \emph{proper}  (strictly) semistable reduction over $\Spec(R)$.
\end{defn}
\begin{rmk}\label{rmk_ssred_local}
One has the isomorphism
$$X\simeq \spec (\hat{R}[[x_1,\dots,x_n]]/(x_1\cdots x_r-\pi))\quad\textrm{(\cite[\S 2.16]{deJong1996})}$$ formal locally. As a consequence, the log morphism $(X,X_0)\to (\spec(R),\spec(k))$ of log schemes is log smooth \cite[Examples 3.7 (2)]{KKato1988}.
\end{rmk}

Notations as in Definition \ref{defn_ssreduction}. Let $Y\subset Z_0$ be a regular closed subscheme such that $Z_0$ has normal crossings with $Y$. Let $D\subset Z$ be a (strict) normal crossing divisor such that $D\cup Z_0$ is a normal crossing divisor of $Z$ and has normal crossings with $Y$. Denote $X={\rm Bl}_YZ$ for the blow-up of $Z$ along $Y$. Since the blow-up procedure commutes with \'etale base changes, The above lemmas show that 
\begin{lem}
Notations as above. $X$ is a semistable reduction over ${\rm Spec}(R)$. The inverse image of $Z_0\cup D$ is a normal crossing divisor on $X$. If moreover $Z_0\cup D$ is strict normal crossing, then its inverse image is strict normal crossing.
\end{lem}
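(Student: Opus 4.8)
The proof is local and reduces to a direct computation on the charts of the blow-up. Since the formation of $X=\mathrm{Bl}_YZ$, of the fibres $X_0$ and $X_K$, and of scheme-theoretic preimages all commute with \'etale base change on $Z$, and since regularity, being a semistable reduction, and being a (strict) normal crossing divisor are \'etale-local properties, I would first pass to an \'etale neighbourhood. There, combining the hypotheses with Definition \ref{defn_NC_subscheme}, Lemma \ref{lem_reg_lc} and the local model of Remark \ref{rmk_ssred_local}, one may assume $Z=\Spec A$ with $A$ a regular local ring carrying a regular system of parameters $x_1,\dots,x_N$ such that, for suitable $1\le r\le m\le k\le N$,
$$
Z_0=\{x_1\cdots x_r=0\},\qquad Z_0\cup D=\{x_1\cdots x_m=0\},\qquad Y=\{x_r=x_{m+1}=\cdots=x_k=0\},
$$
with $\pi=x_1\cdots x_r$ up to a unit (and $D=\{x_{r+1}\cdots x_m=0\}$). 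The point to extract from ``$Y$ has normal crossings with $Z_0\cup D$'' together with $Y\subseteq Z_0$ is that $Y$ involves \emph{exactly one} of the coordinates cutting out $Z_0\cup D$, and that this coordinate may be chosen to cut out a component of $Z_0$; the remaining defining equations of $Y$ involve coordinates not occurring in $Z_0\cup D$.

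Two of the assertions are then immediate. As $Y$ is regular in the regular scheme $Z$, it is regularly embedded (Lemma \ref{lem_reg_lc}), so $X=\mathrm{Bl}_YZ$ is again regular; and since $\pi\colon X\to Z$ restricts to an isomorphism over $Z\setminus Y$, which contains the generic fibre because $Y\subseteq Z_0$, one has $X_K\cong Z_K$, smooth over $K$. It therefore remains to analyse the special fibre $X_0=V(\pi)$ and the preimage $\pi^{-1}(Z_0\cup D)=V\bigl(\pi^{*}(x_1\cdots x_m)\bigr)$.

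For this I would run through the affine charts of the blow-up of $Y=V(x_r,x_{m+1},\dots,x_k)$, one for each pivot $\ell\in\{r,m+1,\dots,k\}$. On the chart with pivot $\ell$ the scheme $X$ is $\Spec$ of a regular ring obtained from $A$ by substituting $x_i\mapsto x_\ell y_i$ (with $y_i$ a new variable) for each generator $x_i\ne x_\ell$ of the ideal of $Y$, while $x_1,\dots,x_{r-1},x_{r+1},\dots,x_m$ still belong to a regular system of parameters. Because $Y$ uses only the single coordinate $x_r$ among $x_1,\dots,x_m$, the monomial $x_1\cdots x_m$ pulls back to $x_1\cdots x_{r-1}x_{r+1}\cdots x_m$ times $x_r$ (if $\ell=r$) or times $x_\ell y_r$ (if $\ell\ne r$), and in every case this is a squarefree monomial in a regular system of parameters of the chart; likewise $\pi=x_1\cdots x_r$ pulls back to a squarefree monomial. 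Hence $X_0$ and $\pi^{-1}(Z_0\cup D)$ are reduced divisors whose local branches are coordinate hypersurfaces, so each is a normal crossing divisor and $X$ is a semistable reduction over $\Spec R$. Globally the components of $\pi^{-1}(Z_0\cup D)$ are the strict transforms of the components of $Z_0\cup D$ together with the exceptional divisor $E\cong\mathbb{P}(N_{Y/Z})$, each of which is regular (a blow-up of a regular scheme along a regular centre, respectively a projective bundle over the regular $Y$); hence when $Z_0\cup D$ is strict normal crossing, so is its preimage, which is the last assertion.

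The step I expect to be the main obstacle is the first one: pinning down this local normal form, i.e.\ verifying that admissibility forces $Y$ to be transverse to $Z_0\cup D$ in the strong sense that, among the local equations of $Z_0\cup D$, the ideal of $Y$ contains exactly one. This is precisely what keeps the pulled-back monomials squarefree; if $Y$ met $Z_0$ along two of its branches, the special fibre of the blow-up would acquire a multiple component and $X$ would fail to be semistable. Once this normal form is secured, the remainder is routine chart bookkeeping, which is why I have only sketched it.
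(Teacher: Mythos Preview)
Your approach—\'etale localization followed by a chart-by-chart verification—is exactly what the paper's one-line proof (``blow-up commutes with \'etale base change, and the above lemmas'') has in mind, and your chart calculation is correct under the local normal form you posit.

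Your hesitation about that normal form is justified, and in fact sharper than you may realize. Definition~\ref{defn_NC_subscheme} as written does \emph{not} force $Y$ to meet $Z_0\cup D$ along a single branch: it allows $Y=\{x_s=\cdots=x_k=0\}$ with $Z_0\cup D=\{x_1\cdots x_r=0\}$ for any $1\le s\le r$, so the overlap $\{x_s,\dots,x_r\}$ may contain several coordinates. The paper itself uses this freedom elsewhere (the quantity $l_{Y_0}(Z_0)$ in the proof of Proposition~\ref{prop_KEY}, and the local model in the proof of Lemma~\ref{lem_induction_ssreduction}). And your prediction of what then goes wrong is exactly right: if $Y$ lies in two components of $Z_0$, the exceptional divisor acquires multiplicity $\ge 2$ in $X_0$. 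Concretely, take $Z=\Spec R[x,y]/(xy-\pi)$ and $Y=V(x,y)$; on the chart $y=ux$ the blow-up is $\Spec R[x,u]/(x^2u-\pi)$, whose special fibre $V(x^2u)$ is non-reduced, so $X$ is not semistable in the sense of Definition~\ref{defn_ssreduction}. Thus the obstacle you flagged is not a missing ingredient in your argument but an imprecision in the statement: either ``normal crossings with $Y$'' is meant in the restricted sense you wrote down (exactly one shared coordinate, necessarily from $Z_0$), in which case your proof is complete, or the lemma requires that extra hypothesis. The paper's own proof does not address this point.
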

\begin{lem}\label{lem_induction_ssreduction}
Assume the following conditions:
	\begin{enumerate}
		\item Let $R$ be a discrete valuation ring and $A$ a Noetherian regular local ring over $R$ such that $Z:={\rm Spec}(A)\to {\rm Spec}(R)$ is a strict semistable reduction.
		\item Let $D$ be a divisor on $Z$ such that $Z_0\cup D$ is strict normal crossing. 
		\item Let $Y\subset Z_0$ be a regular closed subscheme such that $Z_0$ has strict normal crossings with $Y$ and $Y$ is contained in every component of $D$.
		\item the number of components of $Z_0\cup D$ containing $Y$ is less than $\dim X-\dim Y$.
	\end{enumerate}
Then there is a regular divisor $D'$ on $Z$, containing $Y$, such that $D'$ is a semistable reduction over $R$, $Z_0\cup D\cup D'$ is strict normal crossing.
\end{lem}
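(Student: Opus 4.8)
The plan is to observe first that the assertion is local on $Z=\Spec(A)$ with $A$ a Noetherian regular local ring, so the whole argument can be carried out with a regular system of parameters of $A$. Write $N=\dim A$. Combining hypotheses (1)--(3) I would, after replacing $A$ by an \'etale local ring if necessary, produce a regular system of parameters $z_1,\dots,z_N$ of $A$ adapted simultaneously to $Z_0$, to $D$, and to $Y$: namely, with $\pi$ equal to a unit times $\prod_{i\in J_0}z_i$ for some $J_0\subseteq\{1,\dots,p\}$, the components of $Z_0$ being the $\{z_i=0\}$, $i\in J_0$, the components of $D$ being the $\{z_i=0\}$, $i\in\{1,\dots,p\}\setminus J_0$, and $Y=\{z_q=\dots=z_k=0\}$ for suitable $q\le k\le N$. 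The existence of such a common coordinate system is where hypotheses (2) and (3) are really used --- it follows from Lemma \ref{lem_SNC_algebra}, the definition of strict normal crossings with a subscheme (Definition \ref{defn_NC_subscheme}), the local model of a strictly semistable reduction (Remark \ref{rmk_ssred_local}), and the hypothesis that every component of $D$ passes through $Y$, after some bookkeeping of linear independence in $\mathfrak{m}_A/\mathfrak{m}_A^2$.

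With such coordinates in hand, the next step is to read off hypothesis (4). Since $z_1,\dots,z_N$ is a regular system of parameters, a component $\{z_i=0\}$ of $Z_0\cup D$ (so $1\le i\le p$) contains $Y=\{z_q=\dots=z_k=0\}$ if and only if $z_i\in(z_q,\dots,z_k)$, i.e.\ if and only if $i\ge q$; and because $Y\subseteq Z_0\neq\emptyset$, at least one component of $Z_0$ contains $Y$, so $q\le p$. Hence the number of components of $Z_0\cup D$ containing $Y$ equals $p-q+1$, while $\dim Z-\dim Y=\operatorname{codim}_Z Y=k-q+1$, and hypothesis (4) says precisely that $p<k$.

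Now I would simply take $D'=\{z_{p+1}=0\}$: the index $p+1$ is legitimate because $p<k$, and it satisfies $q\le p+1\le k$, so $z_{p+1}$ is one of the generators of $I(Y)$ and is not a defining function of any component of $Z_0\cup D$. Then $D'$ is a regular divisor sharing no component with $Z_0\cup D$, we have $Y\subseteq D'$, and $Z_0\cup D\cup D'=\{z_1\cdots z_{p+1}=0\}$ is strict normal crossing by Lemma \ref{lem_SNC_algebra}. It remains to check that $D'\to\Spec(R)$ is a strictly semistable reduction. For this I pass to the formal local model $\widehat A\cong \widehat R[[x_1,\dots,x_N]]/(x_1\cdots x_r-\pi)$ of Remark \ref{rmk_ssred_local}, with $\{x_1=0\},\dots,\{x_r=0\}$ the components of $Z_0$; since $z_{p+1}$ is part of a regular system of parameters and $\{z_{p+1}=0\}$ is distinct from all $Z_0$-components (so $\bar z_{p+1}\notin\operatorname{span}(\bar x_1,\dots,\bar x_r)$ in $\mathfrak{m}/\mathfrak{m}^2$), $x_1,\dots,x_r,z_{p+1}$ extends to a regular system of parameters of $\widehat A$, and re-examining the presentation gives $\widehat A\cong \widehat R[[x_1,\dots,x_r,z_{p+1},\dots]]/(x_1\cdots x_r-\pi)$. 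Quotienting by $z_{p+1}$ yields $\widehat{\sO}_{D'}\cong\widehat R[[x_1,\dots,x_r,\dots]]/(x_1\cdots x_r-\pi)$, which is exactly the local model of a strictly semistable reduction, with special fiber $D'_0=\{x_1\cdots x_r=0\}$ a strict normal crossing divisor in $D'$; this finishes the proof.

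The step I expect to be the main obstacle is the very first one: arranging a single regular system of parameters for which $Z_0$, the components of $D$, and $Y$ are all coordinate subschemes. This is routine linear algebra modulo $\mathfrak{m}_A^2$ once the definitions are unwound, but it is the place where all four hypotheses genuinely enter --- in particular the hypothesis that $Y$ lies in every component of $D$, without which the components of $D$ need not be among the ``coordinate hyperplanes through $Y$''. After that, hypothesis (4) is literally the statement that there is exactly one spare coordinate $z_{p+1}$ cutting out $Y$ beyond those already occupied by components of $Z_0\cup D$, and taking $D'$ to be its zero locus does the job; the verification that $D'$ is semistable is then the short local-model computation above.
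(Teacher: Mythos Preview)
Your proposal is correct and follows essentially the same idea as the paper: both arguments come down to linear algebra in $\mathfrak m_A/\mathfrak m_A^2$, producing an element $s\in I(Y)$ whose image is linearly independent from the images of the defining equations of the components of $Z_0\cup D$, and then setting $D'=\{s=0\}$.

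The only difference is organizational. The paper keeps \emph{two} partial parameter systems --- one $x_1,\dots,x_m$ with $D=\{x_1\cdots x_r=0\}$ and $Y=\{x_1=\dots=x_m=0\}$, and a separate one $y_1,\dots,y_s$ with $\pi=y_1\cdots y_s$ for $Z_0$ --- and then chooses a \emph{generic} $s\in(x_1,\dots,x_m)$ whose class avoids the proper subspace $\overline{(y_1,\dots,y_s,x_1,\dots,x_r)}\cap\overline{(x_1,\dots,x_m)}$ of $\overline{(x_1,\dots,x_m)}$; hypothesis~(4) is exactly what makes this subspace proper. You instead merge everything into a \emph{single} regular system $z_1,\dots,z_N$ up front and then pick the specific coordinate $z_{p+1}$. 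Your version is more explicit and makes the numerology of hypothesis~(4) (namely $p<k$) completely transparent; the paper's version is slightly quicker because it does not need to verify that a single system simultaneously adapted to $Z_0$, $D$, and $Y$ exists --- which, as you rightly flag, is the one step requiring care. The endgame (checking that $D'$ is regular, not a component of $Z_0$, and that $D'_0$ is SNC in $D'$, hence $D'$ is semistable) is the same in both.
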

\begin{proof}
	By condition (3) there exist $x_1,\dots,x_k\in A$ such that
	\begin{itemize}
		\item $x_1,\dots,x_m$ form a part of a parameter system,
		\item $D=\{x_1\cdots x_r=0\}$ for some $1\leq r\leq m$,
		\item $Y=\{x_1=0,\dots, x_m=0\}$.
	\end{itemize}
By Lemma \ref{lem_SNC_algebra}, we have
	$$\pi=y_1\cdots y_s$$
	where $y_1,\dots,y_s\in A$ form a part of a parameter system. Since $Y\subset Z_0$, $(y_1,\dots,y_s)/m_A^2$ define a subspace of $(x_1,\dots,x_m)/m_A^2$. Let $s\in m_A$ and $\bar{s}$ be the image of $s$ in $m_A/m_A^2$. Denote $D'$ for the divisor on $Z$ defined by $s$. By condition (4), one has
$$
(y_1,\dots,y_s,x_1,\dots,x_r)\cap(x_1,\dots,x_m)/m_A^2\varsubsetneqq (x_1,\dots,x_m)/m_A^2.
$$
Therefore we can choose a sufficiently general $s\in(x_1,\dots,x_m)$ so that $\bar{s}\neq 0$ does not lies in $(y_1,\dots,y_s,x_1,\dots,x_r)/m_A^2$. This ensures that $D'$ is regular, flat over $\spec(R)$, and $Z_0\cup D\cup D'$ is strict normal crossing.  As a consequence, $D'_0=Z_0\cap D'$ (scheme theoretic) is strict normal crossing. Hence $D'$ is a semistable reduction over $R$.
\end{proof}
The remaining part of the appendix is devoted to construct the log version of the residue sequence, which is used in the proof of proposition \ref{prop_KEY}.

Let $R$ be a discrete valuation ring and $X$ be a semistable reduction over $S=\spec(R)$. Let $0\in\spec(S)$ be the closed point and $X_0$ the fiber over $0$. Let $D\subset X$ be a connected regular divisor such that $D$ is a semistable reduction over $S$ and $X_0\cup D$ is a normal crossing divisor. By Remark \ref{rmk_ssred_local}, the morphisms of log schemes $\mathbf{X}=(X,X_0)\to \mathbf{S}=(S,0)$, $\mathbf{X}'=(X,X_0\cup D)\to \mathbf{S}$ and $\mathbf{D}=(D,D\cap X_0)\to\mathbf{S}$ are log smooth. Denote $\sM_{X_0\cup D}$ for the log structure on $X$ induced from $X_0\cup D$, then $\sM_{X_0\cup D}|_D$ is induced from the pre-log structure $\sM_{D_0}\oplus \mathbb{N}$. As a consequence,
$$\Omega^1_{(D,\sM_{X_0\cup D}|_D)}\simeq \Omega^1_{\mathbf{D}/\mathbf{S}}\oplus\sO_D$$ and thus
\begin{align}\label{align_split_cotangent}
\Omega^i_{(D,\sM_{X_0\cup D}|_D)}\simeq \Omega^i_{\mathbf{D}/\mathbf{S}}\oplus \Omega^{i-1}_{\mathbf{D}/\mathbf{S}}.
\end{align}
Define the residue map
$${\rm res}_{\mathbf{D}}:\Omega^i_{\mathbf{X}'/\mathbf{S}}\to \Omega^{i-1}_{\mathbf{D}/\mathbf{S}}$$
to be the composite of the natural map
$$\Omega^i_{\mathbf{X}'/\mathbf{S}}\to \Omega^i_{(D,\sM_{X_0\cup D}|_D)/\mathbf{S}}$$
with the projection 
$$\Omega^i_{(D,\sM_{X_0\cup D}|_D)}\stackrel{(\ref{align_split_cotangent})}{\simeq} \Omega^i_{\mathbf{D}/\mathbf{S}}\oplus \Omega^{i-1}_{\mathbf{D}/\mathbf{S}}\to \Omega^{i-1}_{\mathbf{D}/\mathbf{S}}.$$
Recall that the log morphism $\mathbf{X}'\to\mathbf{X}$ induces a natural map
$$\Omega^i_{\mathbf{X}/\mathbf{S}}\to\Omega^i_{\mathbf{X}'/\mathbf{S}}$$
\begin{lem}\label{lem_residue_sequence}
	The sequence
	$$0\to\Omega^i_{\mathbf{X}/\mathbf{S}}\to\Omega^i_{\mathbf{X}'/\mathbf{S}}\stackrel{{\rm res}_{\mathbf{D}}}{\to}\Omega^{i-1}_{\mathbf{D}/\mathbf{S}}\to0$$
	and its restriction
	$$0\to\Omega^i_{\mathbf{X}/\mathbf{S}}|_{X_0}\to\Omega^i_{\mathbf{X}'/\mathbf{S}}|_{X_0}\to\Omega^{i-1}_{\mathbf{D}/\mathbf{S}}|_{D_0}\to0$$
	are short exact sequences.
\end{lem}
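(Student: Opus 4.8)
The plan is to reduce both assertions to an explicit computation in a local model. Exactness of a complex of coherent $\sO_X$-modules may be checked \'etale-locally --- indeed on stalks --- and the residue map $\mathrm{res}_{\mathbf{D}}$ is defined globally, so it suffices to work on a small \'etale (or formal) neighbourhood. By Remark~\ref{rmk_ssred_local} such a neighbourhood has the shape $X=\Spec(A)$ with $A=\hat R[[x_1,\dots,x_n]]/(x_1\cdots x_r-\pi)$ (a regular local ring, in particular a domain) and $X_0=\{x_1\cdots x_r=0\}$. Since $D$ is a connected, hence irreducible, regular divisor, $X_0\cup D$ is normal crossing, and $D$ is flat over $S$ (being semistable over $S$) so that $D\not\subseteq X_0$, after a further \'etale base change and a relabelling of coordinates we may assume $D=\{x_{r+1}=0\}$, with $x_1,\dots,x_{r+1}$ part of a regular system of parameters and $D=\Spec\bigl(\hat R[[x_1,\dots,\widehat{x_{r+1}},\dots,x_n]]/(x_1\cdots x_r-\pi)\bigr)$ again a semistable reduction over $S$.

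In this model $\Omega^1_{\mathbf{X}/\mathbf{S}}$ is free on $\dlog x_1,\dots,\dlog x_r,dx_{r+1},\dots,dx_n$ modulo $\sum_{i\le r}\dlog x_i=0$, and $\Omega^1_{\mathbf{X}'/\mathbf{S}}$ is obtained from it by replacing the generator $dx_{r+1}$ with $\dlog x_{r+1}$, the natural map $\Omega^1_{\mathbf{X}/\mathbf{S}}\to\Omega^1_{\mathbf{X}'/\mathbf{S}}$ being the identity on the other generators and sending $dx_{r+1}\mapsto x_{r+1}\dlog x_{r+1}$; likewise $\Omega^1_{\mathbf{D}/\mathbf{S}}=\Omega^1_{\mathbf{X}/\mathbf{S}}|_D\big/\sO_D\, dx_{r+1}|_D$. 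I would then fix a local frame $e_1=dx_{r+1},e_2,\dots,e_{n-1}$ of $\Omega^1_{\mathbf{X}/\mathbf{S}}$ for which $e_2|_D,\dots,e_{n-1}|_D$ is a frame of $\Omega^1_{\mathbf{D}/\mathbf{S}}$; then $\dlog x_{r+1},e_2,\dots,e_{n-1}$ is a frame of $\Omega^1_{\mathbf{X}'/\mathbf{S}}$. Passing to exterior powers and sorting the multi-indices $J$ by whether $1\in J$, one sees that $\Omega^i_{\mathbf{X}'/\mathbf{S}}$ is free on $\{\dlog x_{r+1}\wedge e_J\}_{|J|=i-1}$ and $\{e_J\}_{|J|=i}$ (with $J\subseteq\{2,\dots,n-1\}$), inside which the image of $\Omega^i_{\mathbf{X}/\mathbf{S}}$ is spanned by $\{x_{r+1}\dlog x_{r+1}\wedge e_J\}$ and $\{e_J\}$. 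Unwinding the definition of $\mathrm{res}_{\mathbf{D}}$ --- restriction to $D$, which identifies $\Omega^{\bullet}_{\mathbf{X}'/\mathbf{S}}|_D$ with $\Omega^{\bullet}_{(D,\sM_{X_0\cup D}|_D)/\mathbf{S}}$ because $dx_{r+1}|_D=x_{r+1}\dlog x_{r+1}|_D=0$, followed by the projection onto the $\dlog x_{r+1}$-summand of \eqref{align_split_cotangent} --- yields $\mathrm{res}_{\mathbf{D}}(\dlog x_{r+1}\wedge e_J)=e_J|_D$ and $\mathrm{res}_{\mathbf{D}}(e_J)=0$. The first exact sequence then drops out: the composite $\Omega^i_{\mathbf{X}/\mathbf{S}}\to\Omega^{i-1}_{\mathbf{D}/\mathbf{S}}$ vanishes, $\mathrm{res}_{\mathbf{D}}$ surjects onto the frame $\{e_J|_D\}$ of $\Omega^{i-1}_{\mathbf{D}/\mathbf{S}}$, the left-hand map is injective (immediate from the frame descriptions, $A$ being a domain), and any $\omega$ with $\mathrm{res}_{\mathbf{D}}(\omega)=0$ has its $\dlog x_{r+1}$-coefficient divisible by $x_{r+1}$, hence lies in $\Omega^i_{\mathbf{X}/\mathbf{S}}$.

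For the restricted sequence I would tensor the short exact sequence just obtained with $\sO_{X_0}=\sO_X/\pi\sO_X$. Right-exactness is automatic and identifies the three restricted sheaves, so the only issue is left-exactness, which is governed by $\mathrm{Tor}_1^{\sO_X}(\Omega^{i-1}_{\mathbf{D}/\mathbf{S}},\sO_X/\pi\sO_X)$. For any $\sO_X$-module this $\mathrm{Tor}$ group is the submodule of $\pi$-torsion, and since $\Omega^{i-1}_{\mathbf{D}/\mathbf{S}}$ is locally free over $\sO_D$ while $\pi$ is a non-zero-divisor on $\sO_D$ (because $D$ is flat over $S$), it vanishes; hence the restricted sequence is exact.

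I expect the one genuinely delicate step to be the second paragraph: setting up the local models so that they are compatible with the log structures and with the flatness/semistability hypotheses on $D$, and then checking that the intrinsically defined map $\mathrm{res}_{\mathbf{D}}$ really does amount, in these coordinates, to extracting the $\dlog x_{r+1}$-coefficient and restricting to $D$ --- the crux being the compatibility of the splitting \eqref{align_split_cotangent} with the exterior-power description. Everything else is formal bookkeeping, and the same argument applies verbatim to the slight generalization --- an additional normal crossing divisor carried along in the ambient log structure --- that is actually invoked in the proof of Proposition~\ref{prop_KEY}.
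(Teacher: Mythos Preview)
Your proof is correct and follows essentially the same route as the paper: reduce to the formal local model $A=\hat R[[x_1,\dots,x_n]]/(x_1\cdots x_r-\pi)$ with $D=\{x_{r+1}=0\}$, write down explicit frames for the three log cotangent modules, and read off exactness of the first sequence from the description of $\mathrm{res}_{\mathbf D}$ as ``extract the $\dlog x_{r+1}$-coefficient and restrict to $D$''. The one small difference is in the restricted sequence: the paper simply asserts that the left-hand map stays injective ``by local calculations'', whereas you give the cleaner conceptual reason, namely that $\mathrm{Tor}_1^{\sO_X}(\Omega^{i-1}_{\mathbf D/\mathbf S},\sO_X/\pi)$ is the $\pi$-torsion of a locally free $\sO_D$-module and vanishes because $D$ is $S$-flat --- this is a nice packaging, but not a different argument in substance.
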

\begin{proof}
	The proof is done by local calculations. We may assume that $R$ is complete,
	$$X\simeq \spec (R[[x_1,\dots,x_n]]/(x_1\cdots x_r-\pi)), \quad r<n$$
	and $D$ is defined by $x_{r+1}=0$.  Then
	$$\Omega^1_{\mathbf{X}/\mathbf{S}}\simeq \bigoplus_{i=1}^r\sO_X\frac{dx_i}{x_i}/\sO_X\sum_{i=1}^r\frac{dx_i}{x_i}\oplus\bigoplus_{j=r+1}^n\sO_Xdx_j,$$
	$$\Omega^1_{\mathbf{X}'/\mathbf{S}}\simeq \bigoplus_{i=1}^{r+1}\sO_X\frac{dx_i}{x_i}/\sO_X\sum_{i=1}^r\frac{dx_i}{x_i}\oplus\bigoplus_{j=r+2}^n\sO_Xdx_j$$
	and
	$$\Omega^1_{\mathbf{D}/\mathbf{S}}\simeq \bigoplus_{i=1}^r\sO_D\frac{dx_i}{x_i}/\sO_D\sum_{i=1}^r\frac{dx_i}{x_i}\oplus\bigoplus_{j=r+2}^n\sO_Ddx_j.$$
	The residue map is defined as follows.
	For every $\alpha\in\Omega^i_{\mathbf{X}'/\mathbf{S}}$, there is a unique decomposition
	$$\alpha=\beta+\gamma\wedge\frac{dx_{r+1}}{x_{r+1}}$$
	where $\beta$, $\gamma$ do not involve $\frac{dx_{r+1}}{x_{r+1}}$. Then 
	$${\rm res}_{\mathbf{D}}(\alpha)=\gamma|_{D}.$$
	These calculations show that
	$$0\to\Omega^i_{\mathbf{X}/\mathbf{S}}\to\Omega^i_{\mathbf{X}'/\mathbf{S}}\to\Omega^{i-1}_{\mathbf{D}/\mathbf{S}}\to0$$
	is a short exact sequence.
	By restriction we have an exact sequence
	$$\Omega^i_{\mathbf{X}/\mathbf{S}}|_{X_0}\to\Omega^i_{\mathbf{X}'/\mathbf{S}}|_{X_0}\to\Omega^{i-1}_{\mathbf{D}/\mathbf{S}}|_{D_0}\to0.$$
	By local calculations the map on the left is injective. This proves the lemma.
\end{proof}

\textbf{Acknowledgment:} We would like to thank sincerely Luc Illusie for several valuable e-mail communications, and Weizhe Zheng for his more conceptual argument to Theorem \ref{thm_decom_lifting} which is also included in the note. Warm thanks go to Christian Liedtke for his interest and comments on the early version of this paper. We thank heartily Mircea Musta\c{t}\u{a} for the explaination on \cite[Proposition 31.3]{Mustata} which is very helpful to the proof of Proposition \ref{prop_KEY}. We thank heartily Dan Abramovich for reminding us of the work \cite{Abr2016}. Last but not least, the first named author would like to thank Kang Zuo for his interest and constant support. The second named author would like to express his gratitude to Xiaotao Sun and Jun Li for their constant encouragement throughout the work.

\begin{bibdiv}
	\begin{biblist}
		\bib{Abr2002}{article}{
			author={Abramovich, Dan},
			author={Karu, Kalle},
			author={Matsuki, Kenji},
			author={W\l odarczyk, Jaros\l aw},
			title={Torification and factorization of birational maps},
			journal={J. Amer. Math. Soc.},
			volume={15},
			date={2002},
			number={3},
			pages={531--572},
			issn={0894-0347},
			review={\MR{1896232}},
			doi={10.1090/S0894-0347-02-00396-X},}
			
			\bib{Abr2016}{article}{
				author={Abramovich, Dan},
				author={Temkin, Michael},
				title={Functorial factorization of birational maps for qe schemes in characteristic 0},
				journal={arXiv:1606.08414},}
			
			\bib{Illusie2002}{book}{
			author={Bertin, Jos{\'e}},
			author={Demailly, Jean-Pierre},
			author={Illusie, Luc},
			author={Peters, Chris},
			title={Introduction to Hodge theory},
			series={SMF/AMS Texts and Monographs},
			volume={8},
			note={Translated from the 1996 French original by James Lewis and
				Peters},
			publisher={American Mathematical Society, Providence, RI; Soci\'et\'e
				Math\'ematique de France, Paris},
			date={2002},
			pages={x+232},
			isbn={0-8218-2040-0},
			review={\MR{1924513 (2003g:14009)}},
		}
		\bib{Cynk2009}{article}{
			author={Cynk, S{\l}awomir},
			author={van Straten, Duco},
			title={Small resolutions and non-liftable Calabi-Yau threefolds},
			journal={Manuscripta Math.},
			volume={130},
			date={2009},
			number={2},
			pages={233--249},
			issn={0025-2611},
			review={\MR{2545516}},
			doi={10.1007/s00229-009-0293-0},
		}
		\bib{deJong1996}{article}{
			author={de Jong, A. J.},
			title={Smoothness, semi-stability and alterations},
			journal={Inst. Hautes \'Etudes Sci. Publ. Math.},
			number={83},
			date={1996},
			pages={51--93},
			issn={0073-8301},
			review={\MR{1423020}},
		}
		\bib{Del_Ill1987}{article}{
			author={Deligne, Pierre},
			author={Illusie, Luc},
			title={Rel\`evements modulo $p^2$ et d\'ecomposition du complexe de de
				Rham},
			language={French},
			journal={Invent. Math.},
			volume={89},
			date={1987},
			number={2},
			pages={247--270},
			issn={0020-9910},
			review={\MR{894379}},
			doi={10.1007/BF01389078},
		}
		\bib{EV1986}{article}{
			author={Esnault, H\'{e}l\`ene},
			author={Viehweg, Eckart},
			title={Logarithmic de Rham complexes and vanishing theorems},
			journal={Invent. Math.},
			volume={86},
			date={1986},
			number={1},
			pages={161--194},
			issn={0020-9910},
			review={\MR{853449}},
			doi={10.1007/BF01391499},
		}

		\bib{Friedman1983}{article}{
			author={Friedman, Robert},
			title={Global smoothings of varieties with normal crossings},
			journal={Ann. of Math. (2)},
			volume={118},
			date={1983},
			number={1},
			pages={75--114},
			issn={0003-486X},
			review={\MR{707162 (85g:32029)}},
			doi={10.2307/2006955},
		}
		
		\bib{Fulton1998}{book}{
			author={Fulton, William},
			title={Intersection theory},
			series={Ergebnisse der Mathematik und ihrer Grenzgebiete. 3. Folge. A
				Series of Modern Surveys in Mathematics [Results in Mathematics and
				Related Areas. 3rd Series. A Series of Modern Surveys in Mathematics]},
			volume={2},
			edition={2},
			publisher={Springer-Verlag, Berlin},
			date={1998},
			pages={xiv+470},
			isbn={3-540-62046-X},
			isbn={0-387-98549-2},
			review={\MR{1644323}},
			doi={10.1007/978-1-4612-1700-8},
		}
		\bib{Hartshorne}{book}{
			author={Hartshorne, Robin},
			title={Algebraic geometry},
			note={Graduate Texts in Mathematics, No. 52},
			publisher={Springer-Verlag, New York-Heidelberg},
			date={1977},
			pages={xvi+496},
			isbn={0-387-90244-9},
			review={\MR{0463157}},
		}
		\bib{Illusie1990}{article}{
			author={Illusie, Luc},
			title={R\'eduction semistable et d\'ecomposition de complexes de de Rham
				\`a\ coefficients},
			language={French},
			journal={Duke Math. J.},
			volume={60},
			date={1990},
			number={1},
			pages={139--185},
			issn={0012-7094},
			review={\MR{1047120}},
			doi={10.1215/S0012-7094-90-06005-3},
		}

\bib{FKato1995}{article}{author={Kato, Fumiharu},
title={Logarithmic embeddings and logarithmic semistable reductions},
journal={arXiv:9411006v2},}

		\bib{KKato1988}{article}{
			author={Kato, Kazuya},
			title={Logarithmic structures of Fontaine-Illusie},
			conference={
				title={Algebraic analysis, geometry, and number theory (Baltimore, MD,
					1988)},
			},
			book={
				publisher={Johns Hopkins Univ. Press, Baltimore, MD},
			},
			date={1989},
			pages={191--224},
			review={\MR{1463703 (99b:14020)}},}
			
			\bib{KN1994}{article}{
			author={Kawamata, Y.},
			author={Namikawa, Y.},
			title={Logarithmic deformations of normal crossing varieties and smoothings of degenerate Calabi-Yau varieties},
			journal={Invent. Math.},
			volume={118},
			number={1},
			date={1994},
			pages={395--409},}
			
			\bib{Lang1995}{article}{
			author={Lang, William E.},
			title={Examples of liftings of surfaces and a problem in de Rham cohomology},
			journal={Compositio Math.},
			volume={97},
			number={1-2},
			date={1995},
			pages={157--160},}

		\bib{LM2014}{article}{
			author={Liedtke, Christian},
			author={Satriano, Matthew},
			title={On the birational nature of lifting},
			journal={Adv. Math.},
			volume={254},
			date={2014},
			pages={118--137},
			issn={0001-8708},
			review={\MR{3161094}},
			doi={10.1016/j.aim.2013.10.030},
		}
		\bib{Matsumura1986}{book}{
			author={Matsumura, Hideyuki},
			title={Commutative ring theory},
			series={Cambridge Studies in Advanced Mathematics},
			volume={8},
			note={Translated from the Japanese by M. Reid},
			publisher={Cambridge University Press, Cambridge},
			date={1986},
			pages={xiv+320},
			isbn={0-521-25916-9},
			review={\MR{879273}},
		}

		\bib{Mustata}{article}{
			author={Musta\c{t}\u{a}, Mircea},
			author={Popa, Mihnea},
			title={Hodge ideals},
			journal={arXiv:1605.08088v4},
		}
		
		\bib{Mukai2013}{article}{
			author={Mukai, Shigeru},
			title={Counterexamples to Kodaira's vanishing and Yau's inequality in
				positive characteristics},
			journal={Kyoto J. Math.},
			volume={53},
			date={2013},
			number={2},
			pages={515--532},
			issn={2156-2261},
			review={\MR{3079312}},
			doi={10.1215/21562261-2081279},
		}
\bib{PP1983}{article}{
			author={Persson, Ulf},
			author={Pinkham, Henry},
			title={Some examples of nonsmoothable varieties with normal crossings},
			journal={Duke Math. J.},
			volume={50},
			date={1983},
			number={2},
			pages={477--486},
			 }

		\bib{Raynaud1978}{article}{
			author={Raynaud, M.},
			title={Contre-exemple au ``vanishing theorem'' en caract\'eristique
				$p>0$},
			language={French},
			conference={
				title={C. P. Ramanujam---a tribute},
			},
			book={
				series={Tata Inst. Fund. Res. Studies in Math.},
				volume={8},
				publisher={Springer, Berlin-New York},
			},
			date={1978},
			pages={273--278},
			review={\MR{541027}},
		}
		\bib{Steenbrink1976}{article}{
			author={Steenbrink, J.},
			title={Limits of Hodge structures},
			journal={Invent. Math.},
			volume={31},
			date={1976},
			pages={229--257 },
		}
	\end{biblist}
\end{bibdiv}
\end{document}